\theoremstyle{plain}
    \newtheorem{thm}{Theorem}[section]
    \newtheorem{claim}[thm]{Claim}
    \newtheorem{conjecture}[thm]{Conjecture}
    \newtheorem{corollary}[thm]{Corollary}
    \newtheorem{example}[thm]{Example}
    \newtheorem{lemma}[thm]{Lemma}
    \newtheorem{proposition}[thm]{Proposition}
    \newtheorem{theorem}[thm]{Theorem}
    \newtheorem*{Thm_k>0_str}{Theorem A}
\theoremstyle{definition}
    \newtheorem{definition}[thm]{Definition}
    \newtheorem*{notation*}{Notation and Terminology}
    \newtheorem{remark}[thm]{Remark}
    \newtheorem*{ack}{Acknowledgments}
\theoremstyle{remark}
    \newtheorem{setup}[thm]{}
\newcommand{\RomanNumeralCaps}[1]{\MakeUppercase{\romannumeral #1}}
\newcommand{\kk}{\textbf{k}}
\newcommand{\A}{\mathbb{A}}
\newcommand{\C}{\mathbb{C}}
\newcommand{\G}{\mathbb{G}}
\newcommand{\PP}{\mathbb{P}}
\newcommand{\Q}{\mathbb{Q}}
\newcommand{\R}{\mathbb{R}}
\newcommand{\Z}{\mathbb{Z}}
\newcommand{\OO}{\mathcal{O}}
\newcommand{\et}{\mathrm{\acute{e}t}}
\newcommand{\id}{\operatorname{id}}
\newcommand{\okappa}{\operatorname{\overline{\kappa}}}
\newcommand{\ord}{\operatorname{ord}}
\newcommand{\red}{\mathrm{red}}
\newcommand{\Aut}{\operatorname{Aut}}
\newcommand{\Exc}{\operatorname{Exc}}
\newcommand{\Fix}{\operatorname{Fix}}
\newcommand{\Gal}{\operatorname{Gal}}
\newcommand{\Ker}{\operatorname{Ker}}
\newcommand{\N}{\operatorname{N}}
\newcommand{\Nef}{\operatorname{Nef}}
\newcommand{\NS}{\operatorname{NS}}
\newcommand{\Pic}{\operatorname{Pic}}
\newcommand{\Sing}{\operatorname{Sing}}
\newcommand{\Spec}{\operatorname{Spec}}
\newcommand{\Supp}{\operatorname{Supp}}
\begin{document}

\title[Endomorphisms of varieties]
{Endomorphisms of quasi-projective varieties: towards Zariski dense orbit and Kawaguchi-Silverman conjectures}

\author{Jia Jia}
\address{Department of Mathematics, National University of Singapore, Singapore 119076, Republic of Singapore}
\curraddr{Yau Mathematical Sciences Center, Tsinghua University, Haidian District, Beijing 100084, China}
\email{jia\_jia@nus.edu.sg,mathjiajia@tsinghua.edu.cn}

\author{Takahiro Shibata}
\address{Department of Mathematics, National University of Singapore, Singapore 119076, Republic of Singapore}
\curraddr{Faculty of Human Development and Culture, Fukushima University, 1 Kanayagawa, Fukushima, 960-1248, Japan}
\email{t-shibata@educ.fukushima-u.ac.jp}

\author{Junyi Xie}
\address{BICMR, Peking University, Haidian District, Beijing 100871, China}
\email{xiejunyi@bicmr.pku.edu.cn}

\author{De-Qi Zhang}
\address{Department of Mathematics, National University of Singapore, Singapore 119076, Republic of Singapore}
\email{matzdq@nus.edu.sg}

\begin{abstract}
	Let $X$ be a quasi-projective variety and $f\colon X\to X$ a finite surjective endomorphism.
	We consider Zariski Dense Orbit Conjecture (ZDO),
	and Adelic Zariski Dense Orbit Conjecture (AZO).
	We consider also Kawaguchi-Silverman Conjecture (KSC)
	asserting that the (first) dynamical degree $d_1(f)$ of $f$
	equals the arithmetic degree $\alpha_f(P)$ at a point $P$ having Zariski dense $f$-forward orbit.
	Assuming $X$ is a smooth affine surface,
	such that the log Kodaira dimension $\overline{\kappa}(X)$ is non-negative
	(resp. the \'etale fundamental group $\pi_1^{\text{\'et}}(X)$ is infinite),
	we confirm AZO, (hence) ZDO, and KSC (when $\deg(f)\geq 2$) (resp. AZO and hence ZDO).
	We also prove ZDO (resp. AZO and hence ZDO)
	for every surjective endomorphism on any projective variety
	with ``larger'' first dynamical degree
	(resp. every dominant endomorphism of any semiabelian variety).
\end{abstract}

\maketitle
\tableofcontents

\section{Introduction}

We work over an algebraically closed field $\kk$ of characteristic $0$ unless otherwise stated.

The motivation of the paper is the following
\emph{Zariski Dense Orbit Conjecture} (ZDO),
\emph{Adelic Zariski Dense Orbit Conjecture} (AZO)
and \emph{Kawaguchi-Silverman Conjecture} (KSC).

\begin{conjecture}[Zariski Dense Orbit Conjecture=ZDO]
	\label{Conj:ZDO}
	Let $X$ be a variety over $\kk$ and $f\colon X\dashrightarrow X$ a dominant rational map.
	If the $f^*$-invariant function field $\kk(X)^f=\kk$,
	then there exists some $x\in X(\kk)$
	whose $f$-orbit $O_f(x)\coloneqq\{f^n(x)\mid n\geq 0\}$ is well-defined,
	i.e., $f$ is defined at $f^n(x)$ for any $n\geq 0$,
	and Zariski dense in $X$.
\end{conjecture}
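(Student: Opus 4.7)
Since the statement above is a well-known open conjecture (independently formulated by Medvedev--Scanlon and Amerik--Campana, and earlier in a slightly different form by Zhang), my plan is to sketch the strategy that has succeeded in the principal known cases rather than a complete general proof. The first move is to replace ZDO by the stronger \emph{Adelic Zariski Dense Orbit} statement: produce a nonempty adelic open set of points with Zariski dense $f$-orbit. From AZO, the ZDO conclusion then follows by a standard diagonal argument against the countably many proper Zariski closed subvarieties of $X$ defined over a countable subfield of $\kk$.

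To establish AZO I would work $p$-adically, in the tradition of Amerik and Bell--Ghioca--Tucker. Spread $(X,f)$ over a finitely generated subring $R\subset \kk$, pick a non-archimedean place $v$ of good reduction, and pass to the completion $K_v$. A standard Jordan-form argument applied to the derivative of the reduction, combined with replacing $f$ by a suitable iterate, produces a periodic point $x_0\in X(K_v)$ at which the tangent map of $f$ is unipotent. On a sufficiently small $v$-adic polydisc $U$ around $x_0$ one then extends $f$ to an analytic $\mathbb{Z}_p$-action $\phi\colon \mathbb{Z}_p\times U\to U$, so that for every $y\in U$ the forward orbit $\{f^n(y)\}_{n\ge 0}$ lies on the single analytic arc $n\mapsto \phi(n,y)$. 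Consequently, if the Zariski closure of this orbit were a proper subvariety $Y_y\subsetneq X$, then $Y_y$ would be $f$-invariant and would contain the entire arc through $y$.

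The hypothesis $\kk(X)^f=\kk$ enters only at the final step: if $Y_y$ were a proper $f$-invariant subvariety for every $y$ in a nonempty adelic open set, then a constructibility and specialization argument would extract a nontrivial $f$-invariant element of $\kk(X)$, contradicting the hypothesis. The principal obstacle, and the reason the conjecture remains open, is that the naive countability arguments rule out only a countable family of invariant subvarieties, whereas one must exclude an entire algebraic family sweeping out an open set. In known settings, extra structure closes this gap: the group law together with Hrushovski-style Mordell--Lang for (semi)abelian varieties; an equivariant MMP together with invariants such as $\okappa(X)$ or $\pi_1^{\et}(X)$ for surfaces, as in the present paper; and a direct comparison of $d_1(f)$ with the dynamical degrees on candidate invariant subvarieties when the first dynamical degree is ``sufficiently large''. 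A uniform version of this final step, independent of the ambient geometry, would yield the full conjecture.
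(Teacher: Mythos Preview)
The statement in question is Conjecture~\ref{Conj:ZDO}, which the paper does \emph{not} prove in general; it is stated as an open conjecture and only special cases are established (Theorems~\ref{Thm_semiAb}, \ref{Thm_k>0_conj}, \ref{Thm_inf_pi1}, \ref{Thm_tri_poly}, \ref{Thm_big_d1}). You correctly recognise this and offer a strategy sketch rather than a proof, so there is no genuine gap to flag on that score.

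Your outline is broadly faithful to the methods the paper actually deploys in its special cases. The reduction ``AZO $\Rightarrow$ ZDO'' is exactly Remark~\ref{rem:AZO_to_ZDO} (via \cite[Corollary 3.29]{Xie19}); the $p$-adic interpolation to a $\Z_p$-action after iteration is precisely what is invoked through \cite[Proposition 3.22]{Xie19} in the proofs of Theorems~\ref{Thm_semiAb} and \ref{thmskewprodpoly}; and your list of ``extra structure'' that closes the gap (semiabelian group law, $\okappa$ and $\pi_1^{\et}$ for affine surfaces, comparison of dynamical degrees) matches the paper's Sections~5--8. One small correction: in the paper the obstruction is handled not by a ``constructibility and specialisation argument'' extracting an invariant rational function directly, but rather case-by-case---for semiabelian varieties via the structure of invariant subvarieties (Lemmas~\ref{lemfixdimone}--\ref{lemnoinvrtransub}), and for the large-$d_1$ case via the SDD condition and arithmetic degrees (Theorem~\ref{thmssdzdo}). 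Your description of the ``principal obstacle'' is accurate and is indeed why the conjecture remains open.
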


When the transcendence degree of $\kk$ over $\Q$ is finite, in \cite[Section 3]{Xie19},
the third author has introduced the adelic topology on $X(\kk)$
and proposed the Adelic Zariski Dense Orbit Conjecture (AZO).

\begin{setup}[Adelic Topology]\label{setup:AdelicTopology}
	The adelic topology has the following basic properties (cf.~\cite[Proposition~3.18]{Xie19}).
	\begin{enumerate}[leftmargin=2em]
		\item It is stronger than the Zariski topology.
		\item It is ${\mathsf{T}}_1$,
		      i.e., for every distinct points $x, y \in X(\kk)$
		      there are adelic open subsets $U, V$ of $X(\kk)$ such that
		      $x \in U, y \notin U$ and $y\in V, x \notin V$.
		\item Morphisms between algebraic varieties over $\kk$ are continuous for the adelic topology.
		\item Flat morphisms are open with respect to the adelic topology.
		\item \label{ppty-of-adelic-topo:irreducible}
		      The irreducible components of $X(\kk)$ in the Zariski topology
		      are the irreducible components of $X(\kk)$ in the adelic topology.
		\item Let $K$ be any subfield of $\kk$ which is finitely generated over $\mathbb{Q}$
		      and such that $X$ is defined over $K$ and $\overline{K} = \kk$.
		      Then the action
		      \[
			      \Gal(\kk/K)\times X(\kk)\to X(\kk)
		      \]
		      is continuous with respect to the adelic topology.
	\end{enumerate}
\end{setup}

\begin{remark}\label{rem:intersection_of_finite_adelic_open} $ $
	\begin{enumerate}[leftmargin=2em]
		\item When $X$ is irreducible,
		      the property (\ref{ppty-of-adelic-topo:irreducible}) above implies that
		      the intersection of finitely many nonempty adelic open subsets of $X(\kk)$ is nonempty.
		      So, if $\dim X\geq 1$, the adelic topology is not Hausdorff.
		      In general, the adelic topology is strictly stronger than the Zariski topology.

		\item An impotent example of adelic open subsets is as follows:
		      Let $L$ be a subfield of $\kk$
		      such that its algebraic closure ${\overline{L}}$ is equal to $\kk$,
		      $L$ is finitely generated over $\Q$,
		      and $X$ is defined over $L$,
		      i.e., $X=X_L\otimes_L \kk$ for some variety $X_L$ over $L$.
		      Fix any embedding $\tau\colon L\hookrightarrow \C_p$ (resp. $\C$).
		      Then, given any open subset $U$ of $X_L(\C_p)$ for the $p$-adic (resp.~Euclidean) topology,
		      the union $X_L(\tau, U)\coloneqq \cup_\iota \Phi_\iota^{-1}(U)$
		      for all embeddings $\iota\colon \kk \to \C_p$
		      extending $\tau$ is,
		      by definition, an open subset of $X(\kk)$ in the adelic topology.
		      Moreover $X_L(\tau, U)$ is empty if and only if $U=\emptyset$.
	\end{enumerate}
\end{remark}

\begin{conjecture}[Adelic Zariski Dense Orbit Conjecture=AZO]\label{Conj:AZO}
	Assume the hypothesis that the transcendence degree of $\kk$ over $\Q$ is finite.
	Let $X$ be a variety over $\kk$ and $f\colon X\dashrightarrow X$ a dominant rational map.
	If the $f^*$-invariant function field $\kk(X)^f=\kk$,
	then there exists a nonempty adelic open subset $A\subseteq X(\kk)$
	such that for every point $x\in A$ the $f$-orbit $O_f(x)$ is well-defined and Zariski dense in $X$.
\end{conjecture}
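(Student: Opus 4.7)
Since Conjecture \ref{Conj:AZO} is open in full generality, I can only sketch a plausible strategy.

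The plan is to spread $X$ and $f$ out over a finitely generated $\Z$-subalgebra $R \subseteq \kk$ (using that the transcendence degree of $\kk$ over $\Q$ is finite), and then exploit the fact that the adelic topology on $X(\kk)$ is generated by analytic open sets pulled back from $X(K_v)$ at the completions $K_v$ of the fraction field of $R$. In particular, it suffices to produce a $v$-adic analytic neighborhood at a single well-chosen place $v$ inside which every point has Zariski dense $f$-orbit; the natural candidate is the basin of a carefully chosen periodic point of $f$.

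The central step is then to find a nonsingular periodic point $p$ with $f^N(p) = p$ at which the differential $df_p^N$ is diagonalizable with multiplicatively independent, non-resonant eigenvalues at some place $v$. At such a point, $v$-adic linearization of $f^N$ is available on a small analytic neighborhood $U_v$ of $p$---Poincar\'e linearization in the archimedean case, Herman--Yoccoz or Rivera-Letelier type results in the non-archimedean case---which conjugates $f^N$ to a linear diagonal map. In these coordinates one checks that a point $x \in U_v$ whose $f$-orbit lies in a proper Zariski closed subvariety $Y \subseteq X$ must have coordinates satisfying a nontrivial multiplicative relation forced by $Y$; moreover, the hypothesis $\kk(X)^f = \kk$ prevents a positive-dimensional family of $f$-periodic subvarieties through $p$, since such a family would descend to a nonconstant $f$-invariant rational function. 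It follows that the points of $U_v$ with non-dense orbit lie in a countable union of proper $v$-adic analytic subsets, and the desired adelic open subset $A \subseteq X(\kk)$ is carved out of the complement.

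The hard part is precisely the production of a periodic point with the required derivative spectrum. There is no guarantee in general that $f$ has a dense supply of periodic points, and even when such points exist, controlling the eigenvalues of $df_p^N$ and avoiding resonances at some place $v$ is delicate and depends on global arithmetic input. This is exactly why the results announced in the abstract restrict to contexts---smooth affine surfaces with $\okappa(X) \ge 0$ or infinite $\pi_1^{\et}(X)$, and semiabelian varieties---in which either such periodic points are available for purely geometric reasons (torsion points on semiabelian varieties being the model case) or the structure of $X$ forces $f$ to be tame enough for the local-to-global gluing to go through.
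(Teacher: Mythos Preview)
The statement is a \emph{conjecture}, and the paper does not prove it in general; it only establishes AZO in the special cases listed in the introduction. You correctly flag this at the outset, so there is no ``gap'' to identify in the usual sense: you are not claiming a proof, only outlining a heuristic.

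That said, it is worth comparing your heuristic with what the paper actually does in its special cases, since the flavour is related but the mechanisms differ. Your proposed route is: find a periodic point $p$ with non-resonant, multiplicatively independent eigenvalues at some place $v$, linearize $f$ locally near $p$, and argue that non-dense orbits are confined to a thin analytic locus. The paper's proofs never attempt this kind of fine spectral control. For semiabelian varieties (Theorem~\ref{Thm_semiAb}) the argument instead exploits two structural facts: every irreducible $f$-invariant subvariety is a translate of a sub-semiabelian variety (via Abramovich's theorem and Lemma~\ref{lemnoinvrtransub}), and the multiplication-by-$3$ map has $0$ as an attracting fixed point $p$-adically. The adelic open set is then built from translates of a $p$-adic basin of $0$ by the $2$-torsion points, and density is forced by a counting argument on $G[2]$. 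For the skew-product result (Theorem~\ref{thmskewprodpoly}) the attracting point is $\infty$ on the $\PP^1$ factor, combined with Bell--Poonen type interpolation (\cite[Proposition 3.22]{Xie19}) to make orbit closures irreducible. In neither case is there any linearization or eigenvalue analysis; the ``periodic point with good derivative'' is replaced by a globally available contracting map ($[3]$, or $y\mapsto f_2(x,y)$ near $\infty$) whose basin is easy to describe.

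So your strategy is a reasonable general-purpose heuristic, and the difficulty you isolate (producing the right periodic point) is genuine. But the paper's actual method sidesteps that difficulty entirely in the cases it treats, by using group or fibration structure to manufacture an attracting locus directly rather than searching for one.
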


\begin{conjecture}[Kawaguchi-Silverman Conjecture=KSC\@; cf.~\cite{KS16}]\label{Conj:KSC}
	Let $X$ be a quasi-projective variety over $\overline{\Q}$
	and $f\colon X\dashrightarrow X$ a dominant rational map.
	Take a point $x\in X(\overline{\Q})$.
	If the $f$-orbit $O_f(x)$ is well-defined and Zariski dense in $X$,
	then the limit $\alpha_f(x)$ (called the arithmetic degree) as defined in \ref{subsec_arithdeg},
	converges and equals $d_1(f)$, the first dynamical degree of $f$ (cf.~\ref{dyn_deg}).
\end{conjecture}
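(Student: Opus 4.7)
The plan is to split the claim into two halves: convergence of the sequence defining $\alpha_f(x)$, and identification of its limit with $d_1(f)$. Fix a height function $h_H$ associated with a very ample divisor $H$ on a projective compactification of $X$, and work with $h_H^+=\max\{h_H,1\}$ so that the logarithm is defined. The upper direction $\overline{\alpha}_f(x):=\limsup_n h_H^+(f^n(x))^{1/n}\le d_1(f)$ holds with no hypothesis on the orbit: for every $\ep>0$ one proves $h_H(f^n(x))\ll_{\ep}(d_1(f)+\ep)^n$ by comparing $(f^n)^*H$ with a suitable ample class via Siu-type inequalities and then invoking functoriality of Weil heights. I would dispatch this first, since it already confines every accumulation value of the sequence to $[0,d_1(f)]$.

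The substance of the claim is the reverse inequality $\underline{\alpha}_f(x):=\liminf_n h_H^+(f^n(x))^{1/n}\ge d_1(f)$, which must genuinely use Zariski density of $O_f(x)$: a preperiodic point, or one trapped in a proper $f$-invariant subvariety, can produce a strictly smaller arithmetic degree. After passing to a projective compactification and resolving indeterminacy, I would treat the polarized case first: when $f^*L\equiv qL$ for some ample $L$ and integer $q>0$, a standard telescoping construction produces a canonical height $\hat h_L(x)=\lim_n q^{-n}h_L(f^n(x))$ satisfying $\hat h_L\circ f=q\hat h_L$; the vanishing locus of $\hat h_L$ is a proper closed subset, so Zariski density of $O_f(x)$ forces $\hat h_L(x)>0$ and therefore $\alpha_f(x)=q=d_1(f)$.

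For the unpolarized case, which is what keeps the full conjecture open, the $d_1(f)$-eigenclass in $\N^1(X)_\R$ need not be representable by an ample divisor, so canonical heights do not directly exist. My strategy would be threefold: (i) run an $f$-equivariant MMP or locate an $f$-invariant fibration in order to decompose $f$ into pieces using multiplicativity of dynamical degrees; (ii) settle base cases---abelian and semiabelian varieties, surfaces stratified by log Kodaira dimension, toric and Fano varieties---via case-specific geometry, as the affine-surface portion of this paper accomplishes under mild hypotheses; (iii) patch these pieces together. The hardest point will be the patching step: even granting that $\alpha_f$ equals the expected dynamical degree on each base and fibre, ensuring that Zariski density of the full orbit forces the maximum rather than a drop caused by intersection with some proper invariant subvariety requires a refined equidistribution or intersection-theoretic input that is presently not available in full generality.
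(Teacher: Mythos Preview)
The statement you are addressing is a \emph{conjecture}, not a theorem; the paper does not prove KSC in general, and neither does your proposal. You yourself say the unpolarized case ``keeps the full conjecture open'' and that the patching step ``requires a refined equidistribution or intersection-theoretic input that is presently not available in full generality.'' That is an honest assessment, but it means what you have written is a research program, not a proof.

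What the paper actually does with KSC is confine itself to special cases: finite surjective endomorphisms of smooth affine surfaces with $\overline{\kappa}(X)\ge 0$ (Theorem~\ref{Thm_k>0_conj}) or with infinite \'etale fundamental group (Theorem~\ref{Thm_inf_pi1}), and it cites \cite{MS20} for semiabelian varieties. The method is not your (i)--(iii) decomposition via equivariant MMP and patching; rather, the paper uses structural theorems (Theorem~\ref{Thm_k>0_str}, Propositions~\ref{p:kappa=1_per} and \ref{p:torus_lift}) to reduce, after an \'etale cover and iteration, to a trivial $\G_m$-bundle or an algebraic torus, where KSC is already known. The transfer along \'etale covers and iterations is handled by Proposition~\ref{p:kscequ} and Lemma~\ref{l:iter_inv}. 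Your upper-bound step $\overline{\alpha}_f(x)\le d_1(f)$ does correspond to something the paper establishes (Proposition~\ref{proupboundarth}, extending \cite{Mat20} to singular varieties), but that is a preliminary ingredient, not the substance of the special-case proofs.

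If your aim was to prove KSC in the generality stated, the genuine gap is exactly where you located it: the lower bound $\underline{\alpha}_f(x)\ge d_1(f)$ outside the polarized situation. Your step (iii) is not a proof step but an acknowledgment that no such argument exists. If your aim was instead to match what the paper proves, you should restrict to the affine-surface hypotheses and follow the structure-theorem-plus-reduction route rather than a general MMP strategy.
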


\begin{remark}\label{rem:AZO_to_ZDO}
	AZO~\ref{Conj:AZO} implies ZDO~\ref{Conj:ZDO}.
	Indeed, even the hypothesis on $\kk$ in AZO~\ref{Conj:AZO} does not cause any problem.
	To be precise,
	for every pair $(X,f)$ over any algebraically closed field $\kk$ of characteristic zero,
	there exists an algebraically closed subfield $K$ of $\kk$
	whose transcendence degree over $\Q$ is finite
	and such that $(X,f)$ is defined over $K$,
	i.e., there exists a pair $(X_K,f_K)$ such that $(X,f)$ is its base change by $\kk$.
	By \cite[Corollary~3.31]{Xie19},
	if AZO~\ref{Conj:AZO} holds for $(X_K,f_K)$, then ZDO~\ref{Conj:ZDO} holds for $(X,f)$.
\end{remark}

\textit{In this paper, when we discuss AZO~\ref{Conj:AZO},
	we always assume that the transcendence degree of $\kk$ over $\Q$ is finite;
	when we discuss KSC~\ref{Conj:KSC}, we always assume that $\kk=\overline{\Q}$.}

\medskip

Theorems~\ref{Thm_semiAb}, \ref{Thm_k>0_str}, \ref{Thm_k>0_conj},
\ref{Thm_inf_pi1}, \ref{Thm_tri_poly} and \ref{Thm_big_d1} are our main results.

We first deal with endomorphisms of semiabelian varieties:
in \cref{Thm_semiAb} below,
we prove AZO~\ref{Conj:AZO} and hence ZDO~\ref{Conj:ZDO}, while KSC~\ref{Conj:KSC} is known.

\begin{thm}\label{Thm_semiAb}
	Assume that the transcendence degree of $\kk$ over $\Q$ is finite.
	Let $G$ be a semiabelian variety over $\kk$ (cf.~\ref{NT}).
	Let $f\colon G\to G$ be a dominant endomorphism.
	Then AZO~\ref{Conj:AZO} and hence ZDO~\ref{Conj:ZDO} hold for $(G,f)$.
\end{thm}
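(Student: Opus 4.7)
The plan is to reduce to the already established cases of algebraic tori and abelian varieties, and to combine them by a $v$-adic analytic argument. First I would exploit the basic structure of morphisms of semiabelian varieties: every morphism $f\colon G\to G$ has the form $f(x)=\phi(x)+t_f$ with $\phi\colon G\to G$ a group homomorphism and $t_f\in G(\kk)$, and dominance of $f$ forces $\phi$ to be an isogeny. The Chevalley-type exact sequence
\[
1\longrightarrow T\longrightarrow G\longrightarrow A\longrightarrow 0,
\]
in which $T$ is the maximal torus subgroup and $A$ an abelian variety, is characteristic, hence preserved by $\phi$. This yields induced isogenies $\phi_T$ on $T$ and $\phi_A$ on $A$, and a dominant endomorphism $f_A=\phi_A+\bar t_f$ on $A$.

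Second, I would carry out a linearization. If $\phi-\id_G$ is surjective, translating coordinates by a solution $x_0$ of $(\phi-\id_G)(x_0)=-t_f$ converts $f$ into the pure group endomorphism $\phi$, and one is reduced to applying AZO in the two pure cases of tori and of abelian varieties (established in the third author's earlier work on the adelic topology). If $\phi-\id_G$ is not surjective, its image $H$ is a proper semiabelian subvariety of $G$, the map induced on $G/H$ is the pure translation $\bar x\mapsto\bar x+\bar t_f$, and the hypothesis $\kk(G)^f=\kk$ forces $\bar t_f$ to generate a Zariski dense subgroup of $G/H$. In either case this extracts candidate adelic open subsets $U_T\subseteq T(\kk)$ and $U_A\subseteq A(\kk)$ whose points have Zariski dense projected orbits.

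Third, I would assemble these pieces into an adelic open subset of $G(\kk)$. Choose a non-archimedean place $v$ of $\kk$ at which $(G,\phi,t_f)$ has good reduction and a $v$-adic ball around a $\phi$-periodic point on which the formal group logarithm linearizes $\phi$. The resulting linear $v$-adic dynamics, combined with the Zariski density extracted above, should rule out orbits trapped in a proper $f$-invariant subvariety; intersecting the resulting $v$-adic open subset with the adelic preimages of $U_T$ and $U_A$ produces the desired adelic open $U\subseteq G(\kk)$.

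The main obstacle is the final assembly. Even when the $T$- and $A$-projections of the orbit are both Zariski dense, the orbit itself may be trapped in a proper translated semiabelian subvariety of $G$ cutting across the extension. Ruling this out amounts to translating the hypothesis $\kk(G)^f=\kk$ into the statement that the $v$-adic analytic arc traced out by the orbit inside the formal group is not contained in any proper subgroup coset, which is essentially a Chabauty--Skolem type statement and is where the technical heart of the proof lies.
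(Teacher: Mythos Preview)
Your proposal takes a different route from the paper, and the route you sketch has a genuine gap that you yourself flag but do not close.

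The paper does \emph{not} use the Chevalley decomposition $1\to T\to G\to A\to 0$ as its organizing principle, and does not attempt to combine separately established AZO statements for $T$ and for $A$. Instead the argument runs as follows. A structural lemma (proved via Abramovich's result that a subvariety of a semiabelian variety with trivial stabilizer is of log general type) shows that under the hypothesis $\kk(G)^f=\kk$, every irreducible $f$-invariant subvariety is a translate $a+G_0$ of a semiabelian subvariety, and when $f$ is an isogeny one may take $a\in\Fix(f)$. The isogeny case is then handled directly and intrinsically on $G$: one works in $\C_3$, where $0$ is an attracting fixed point for the multiplication-by-$3$ map $[3]$, and uses the finite set $G[2]$ of $2$-torsion points as a grid. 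For a point $x$ in a suitable adelic open set, the orbit closure $Z_f(x)$ is a translated semiabelian subvariety $a+H$; since $[3]$ commutes with $f$, applying $[3^{nm}]$ and letting $n\to\infty$ in the $3$-adic topology forces every $y\in G[2]$ into a translate of $H$, hence $G[2]\subseteq H$, and then $H=G$ by a $2$-torsion count. The general (non-isogeny) case is reduced to the isogeny case by quotienting $G$ by a \emph{minimal} $f$-periodic subvariety $V$ (so that the induced map on $G/V$ is an isogeny), not by $T$.

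The gap in your outline is exactly where you say it is: even after reducing to an isogeny $\phi$ on $G$, knowing AZO for $(T,\phi_T)$ and $(A,\phi_A)$ does not give AZO for $(G,\phi)$, because density of the projected orbit in $A$ together with density along the torus fibres still allows the full orbit to lie in a coset of a semiabelian subvariety transverse to the extension. You call this a Chabauty--Skolem type statement but give no mechanism to prove it; your ``$v$-adic analytic arc'' heuristic is not an argument. The paper's $G[2]$-and-$[3]$ trick is precisely such a mechanism, and it works on $G$ as a whole rather than through the Chevalley pieces, so the obstacle you identify simply does not arise.
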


\begin{remark}\label{r:AZO_KSC_surf}
	\leavevmode
	\begin{enumerate}[leftmargin=2em]
		\item \Cref{Thm_semiAb} generalises \cite[Theorem~1.14]{Xie19}
		      and \cite[Theorem 1.1]{GS19} from abelian varieties to semiabelian varieties.
		\item AZO~\ref{Conj:AZO} and hence ZDO~\ref{Conj:ZDO} are known
		      for surjective endomorphisms of projective surfaces (cf.~\cite{Xie19}, \cite{JXZ20}).
		\item KSC~\ref{Conj:KSC} is known
		      for surjective endomorphisms of projective surfaces (cf.~\cite{MSS18}, \cite{MZ19})
		      and for surjective endomorphisms of semiabelian varieties (cf.~\cite{MS20}).
	\end{enumerate}
\end{remark}

The next aim of this paper is to extend both AZO~\ref{Conj:AZO} and KSC~\ref{Conj:KSC} to affine surfaces.
Recall the \emph{log Kodaira dimension} $\okappa(X)$ of a variety $X$
takes value in $\{-\infty,0,\dots,\dim X\}$ (cf.~\cite[\S 11]{Iit82}).
\Cref{Thm_k>0_str} below gives the structures of endomorphisms of smooth affine surfaces;
note that when $\okappa(X)=2$, our $f$ is an automorphism of finite order (cf.~\cref{l:finord}).
For the proof of \cref{Thm_k>0_str}, the main steps are given in \cite{GZ08}.
We simplify or supplement more details in the present paper.

\begin{theorem}\label{Thm_k>0_str}
	Let $X$ be a smooth affine surface
	and $f\colon X\to X$ a finite surjective morphism of degree $\geq 2$.
	Then we have:
	\begin{enumerate}[leftmargin=2em]
		\item Suppose $\okappa(X)=0$.
		      Then $X$ is a \textit{Q}-algebraic torus (cf.~\ref{NT}).
		      Precisely, there is a finite {\'e}tale cover $\tau\colon T\to X$
		      from an algebraic torus $T\cong\G_m^2$;
		      further, $\tau$ can be chosen
		      such that $f$ lifts to a surjective morphism $f_T\colon T\to T$.

		\item Suppose $\okappa(X)=1$.
		      Then there is a finite {\'e}tale cover $X''\to X$
		      such that $X''=\G_m\times B''$ where $B''$ is a smooth affine curve
		      with $\okappa(B'')=1$,
		      and $f$ (after iteration) lifts to an endomorphism $f''$ on $X''$
		      such that $\pi''\circ f''=\pi''$,
		      where the natural projection $\pi''\colon X''\to B''$
		      is the lifting of an Iitaka fibration of $X$.
	\end{enumerate}
\end{theorem}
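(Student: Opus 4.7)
Following and simplifying the strategy of \cite{GZ08}, I fix a log smooth compactification $(\overline{X},D)$ with $D=\overline{X}\setminus X$ a simple normal crossing divisor. After replacing $(\overline{X},D)$ by a suitable higher equivariant model (using the finiteness of $f$), the rational extension $\overline{f}\colon\overline{X}\dashrightarrow\overline{X}$ can be taken to be a finite surjective morphism with $\overline{f}^{-1}(D)=D$ set-theoretically. Combining the usual ramification formula $K_{\overline{X}}=\overline{f}^{\,*}K_{\overline{X}}+R_{\overline{f}}$ with $\overline{f}^{\,*}D=D+R_{\overline{f}}^{\mathrm{vert}}$ (the boundary ramification cancels precisely with $\overline{f}^{\,*}D-D$) yields the log ramification identity
\[
K_{\overline{X}}+D \;=\; \overline{f}^{\,*}(K_{\overline{X}}+D)+R_{\overline{f}}^{\mathrm{hor}},
\]
where $R_{\overline{f}}^{\mathrm{hor}}\ge 0$ is supported off $D$. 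This is the workhorse throughout.

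For Part~(1), with $\okappa(K_{\overline{X}}+D)=0$, pluri-log-canonical sections are essentially unique. Iterating the identity gives $K_{\overline{X}}+D=(\overline{f}^{\,n})^{*}(K_{\overline{X}}+D)+\sum_{i=0}^{n-1}(\overline{f}^{\,i})^{*}R_{\overline{f}}^{\mathrm{hor}}$; if $R_{\overline{f}}^{\mathrm{hor}}\ne 0$, intersecting with a fixed ample class shows that the unique pluri-log-canonical divisor would contain arbitrarily large effective components, a contradiction. Thus $f|_X$ is étale of degree $\ge 2$, and the tower $X\xleftarrow{f}X\xleftarrow{f}\cdots$ forces $\pi_1^{\et}(X)$ to be infinite. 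I then invoke the classification (see \cite{GZ08}) of smooth affine surfaces with $\okappa=0$ and infinite $\pi_1^{\et}$: such surfaces are exactly the \emph{Q}-algebraic tori. To arrange that $f$ itself (not merely an iterate) lifts to the chosen $\tau\colon T\to X$ with $T\isom\G_m^{2}$, I use that every finite étale cover of a \emph{Q}-torus arises from an isogeny of $\G_m^{2}$; replacing $\tau$ by a common refinement of $\tau$ and $\overline{f}^{\,*}\tau$ makes the two covers isomorphic and so produces the lift $f_T$.

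For Part~(2), the log Iitaka fibration is a morphism $\pi\colon X\to B$ onto a smooth curve whose general fiber is a smooth affine curve of $\okappa=0$, hence $\isom\G_m$. By canonicity of the Iitaka fibration, $f$ descends to a finite surjective $g\colon B\to B$ with $\pi\circ f=g\circ\pi$. The base $B$ carries a log general type structure ($\okappa(B)=1$), so every endomorphism is an automorphism of finite order; replacing $f$ by an iterate, we have $g=\id_B$ and $f$ is fiberwise. The (finite) set of multiple fibers of $\pi$ is $f$-invariant, and one can find a finite étale cover $B''\to B$ that simultaneously trivialises the multiplicities at these fibers and kills the class of the generic $\G_m$-torsor in $\Pic$, so that $X'':=X\times_{B}B''\isom\G_m\times B''$; the lift $f''$ of $f$ satisfies $\pi''\circ f''=\pi''$ by construction. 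The main obstacle throughout is the classification step in Part~(1), i.e., passing from the étaleness of the self-cover to the \emph{Q}-torus structure, which is the technical core of \cite{GZ08}; the present contribution is mainly to streamline the derivation of étaleness from the log ramification identity and to verify that a common refinement of covers yields the lift $f_T$ without passing to an iterate.
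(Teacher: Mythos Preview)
Your Part~(2) contains concrete errors. First, you assert that a \emph{finite \'etale} cover $B''\to B$ ``trivialises the multiplicities'' of the singular fibres; this is impossible, since an \'etale base change preserves fibre multiplicities. The paper (Proposition~\ref{p:kappa=1_per}, via Proposition~\ref{p:fat_fib} and the Fox--Bundgaard--Nielsen solution to Fenchel's conjecture) instead takes a cover $B'\to B$ \emph{ramified} exactly to order $m_j$ over each multiple-fibre point $b_j$; it is the normalisation $X'$ of $X\times_B B'$, not the base change on curves, that becomes \'etale over $X$. Second, you claim $\okappa(B)=1$ outright, but the Iitaka base $B=\pi(X)$ may well have $\okappa(B)\le 0$ (e.g.\ $B\cong\PP^1$ with three or more multiple fibres). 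The paper does not argue this; it removes the multiple-fibre locus to obtain a $\G_m$-bundle $X_0\to B_0$ and then uses Lemma~\ref{l:smooth_K*} to get $\okappa(B_0)=\okappa(X_0)\ge\okappa(X)=1$, whence $f|_B$ has finite order. Third, you never verify that \emph{every} fibre of $\pi$ has support $\G_m$; the paper uses $e(X)=0$ (from \'etaleness, Lemma~\ref{l:euler}) together with the Suzuki formula to rule out reducible or otherwise degenerate fibres, and this step is essential before Proposition~\ref{p:fat_fib} can be applied.

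Your Part~(1) also has a questionable step: you assert that after passing to a ``higher equivariant model'' the extension $\overline{f}$ can be made a \emph{finite} morphism with $\overline{f}^{-1}(D)=D$. There is no reason this can be arranged simultaneously for source and target; the paper works instead with a resolution of the graph (two projections $p_i\colon\Gamma\to V$) and the log ramification formula on $\Gamma$. Your intended argument that $R^{\mathrm{hor}}=0$ via growth of effective components is salvageable in that setting, but note that \'etaleness is already Iitaka's theorem (Lemma~\ref{lem:k>0_et}), so no derivation is needed. More substantively, the paper does \emph{not} invoke a black-box ``classification of affine surfaces with $\okappa=0$ and infinite $\pi_1^{\et}$''; rather (Claim~\ref{claim:fib}), it uses $e(X)=0$ and Artin vanishing to force $\overline{q}(X)\ge 1$, then analyses the quasi-Albanese map to produce a $\G_m$-fibration, and proceeds as in Part~(2). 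For the lifting of $f$ without iteration, your ``common refinement'' sketch is replaced in the paper by the universal property of the algebraic-torus closure (Lemma~\ref{lem:torus-cls}), which gives the lift canonically.
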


As a consequence of the above structural theorem,
we confirm both AZO~\ref{Conj:AZO} and KSC~\ref{Conj:KSC}
for smooth affine surfaces of non-negative log Kodaira dimension.

\begin{theorem}\label{Thm_k>0_conj}
	Let $X$ be a smooth affine surface and $f\colon X\to X$ a finite surjective endomorphism.
	Suppose $\okappa(X)\geq 0$.
	Then:
	\begin{enumerate}[leftmargin=2em]
		\item AZO~\ref{Conj:AZO} and hence ZDO~\ref{Conj:ZDO} hold for $(X,f)$.
		\item If $\kk=\overline{\Q}$ and $\deg(f)\geq 2$, then KSC~\ref{Conj:KSC} holds for $(X,f)$.
	\end{enumerate}
\end{theorem}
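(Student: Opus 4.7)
I proceed by case analysis on $\okappa(X) \in \{0, 1, 2\}$, applying the structure theorem~\ref{Thm_k>0_str}.

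If $\okappa(X) = 2$, Lemma~\ref{l:finord} implies $f$ is an automorphism of finite order. Then $\deg(f) = 1$ so part (2) is excluded by hypothesis, and the invariant field $\kk(X)^f = \kk(X)^{\langle f\rangle}$ has transcendence degree $\dim X = 2$ over $\kk$, so the AZO hypothesis fails and part (1) is vacuously true. If $\okappa(X) = 1$, then $f$ preserves the Iitaka fibration $\pi\colon X \to B$ onto an affine curve with $\okappa(B) = 1$, and the induced $\varphi\colon B\to B$ has finite order (since such curves have finite endomorphism semigroups). Hence $\pi\circ f^N = \pi$ for some $N$, so every $f$-orbit lies in the union of $N$ $\pi$-fibers and is not Zariski dense---giving KSC vacuously---while $\pi^*\kk(B)^{\varphi} \subseteq \kk(X)^f$ is a nontrivial subfield, making AZO vacuous.

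The substantive case is $\okappa(X) = 0$. When $\deg(f)\geq 2$, Theorem~\ref{Thm_k>0_str}(1) yields a finite {\'e}tale cover $\tau\colon T \to X$ with $T\cong\G_m^2$ and a lift $f_T\colon T\to T$ satisfying $\tau\circ f_T = f\circ\tau$. For part (1), I apply Theorem~\ref{Thm_semiAb} to $(T,f_T)$ to obtain an adelic open $A_T \subseteq T(\kk)$ of points with Zariski-dense $f_T$-orbits, then push forward via the finite {\'e}tale $\tau$ (using the compatibility of the adelic topology with finite {\'e}tale morphisms) to a nonempty adelic open $A \subseteq X(\kk)$ of points with Zariski-dense $f$-orbits. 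For part (2), a point $x \in X(\overline{\Q})$ with dense $f$-orbit lifts to some $x_T \in T(\overline{\Q})$; since $\tau$ is finite surjective, the Zariski closure of $O_{f_T}(x_T)$ must dominate $X$, hence equals $T$, so the $f_T$-orbit is dense. The known KSC for semiabelian varieties (Remark~\ref{r:AZO_KSC_surf}(3)) gives $\alpha_{f_T}(x_T) = d_1(f_T)$, and the equalities $\alpha_f(x) = \alpha_{f_T}(x_T)$ and $d_1(f) = d_1(f_T)$---standard consequences of $\tau$ being finite---yield $\alpha_f(x) = d_1(f)$.

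The main obstacle is the descent of the AZO hypothesis: the assumption $\kk(X)^f = \kk$ does not automatically imply $\kk(T)^{f_T} = \kk$, since $f_T$ is determined only modulo left-composition with the deck transformation group $G = \Gal(T/X)$, and iterates may acquire spurious invariants. I plan to address this either by choosing the representative lift carefully, by passing to a suitable iterate of $f_T$, or by running Theorem~\ref{Thm_semiAb} for all Galois twists $g\circ f_T$ with $g \in G$ simultaneously and descending the union of the resulting adelic opens. A further subtlety is the subcase $\okappa(X) = 0$ with $\deg(f) = 1$, not directly covered by Theorem~\ref{Thm_k>0_str}; here part (2) is excluded by hypothesis, and for part (1) one argues that either $X$ is itself a \textit{Q}-torus (and the descent above applies) or the classification of smooth affine surfaces with $\okappa = 0$ forces $\Aut(X)$ to be finite, so $f$ has finite order and AZO is vacuous as before.
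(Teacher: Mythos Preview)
Your case analysis and use of Theorem~\ref{Thm_k>0_str} match the paper's proof. The paper streamlines two points: it disposes of all $\deg(f)=1$ cases at once by citing \cite[Corollary 3.31]{Xie19} (AZO is known for automorphisms), and it transfers AZO and KSC along the finite \'etale covers by invoking \cite[Lemma 3.28]{Xie19} and Proposition~\ref{p:kscequ} rather than arguing each direction by hand.

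Your ``main obstacle'' is a phantom: the implication $\kk(X)^f=\kk \Rightarrow \kk(T)^{f_T}=\kk$ \emph{does} hold for any finite surjective $\tau\colon T\to X$ with $\tau\circ f_T=f\circ\tau$. Indeed, suppose $\phi\in\kk(T)^{f_T}\setminus\kk$. Since $\kk(T)/\tau^*\kk(X)$ is a finite extension, $\phi$ has a monic minimal polynomial $t^m+\sum_i(\tau^*b_i)t^i$ over $\tau^*\kk(X)$ with $b_i\in\kk(X)$. Applying $f_T^*$ and using $f_T^*\tau^*=\tau^*f^*$, one sees that $\phi$ also satisfies $t^m+\sum_i\tau^*(f^*b_i)t^i$; uniqueness of the minimal polynomial forces $f^*b_i=b_i$ for all $i$. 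As $\kk$ is algebraically closed and $\phi\notin\kk$, not all $b_i$ lie in $\kk$, so $\kk(X)^f\neq\kk$. Hence no juggling of Galois twists of the lift is needed, and combined with \cite[Corollary 3.31]{Xie19} your ad-hoc treatment of the $\okappa(X)=0$, $\deg(f)=1$ subcase becomes unnecessary as well.
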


We may replace the assumption on the log Kodaira dimension in \cref{Thm_k>0_conj}
by the assumption on the fundamental group of $X$.
Precisely, we have:

\begin{theorem}\label{Thm_inf_pi1}
	Let $X$ be a smooth affine surface and $f\colon X\to X$ a finite surjective endomorphism.
	Suppose the {\'e}tale fundamental group $\pi_1^{\et}(X)$ is infinite.
	Then we have:
	\begin{enumerate}[leftmargin=2em]
		\item AZO~\ref{Conj:AZO} and hence ZDO~\ref{Conj:ZDO} hold for $(X,f)$.
		\item Suppose $\kk=\overline{\Q}$, $\deg(f)\geq 2$ and $X\not\cong\A^1\times\G_m$.
		      Then KSC~\ref{Conj:KSC} holds for $(X,f)$.
		\item Suppose that $\overline{\kappa}(X)=-\infty$.
		      Then $f$ descends along an $\A^1$-fibration of $X$,
		      hence KSC~\ref{Conj:KSC} holds for $(X,f)$ when $\deg(f)=1$ and $\kk=\overline{\Q}$.
	\end{enumerate}
\end{theorem}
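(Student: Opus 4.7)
The natural approach is a case split on the log Kodaira dimension $\okappa(X)\in\{-\infty,0,1,2\}$. When $\okappa(X)\ge 0$ parts (1) and (2) follow directly from Theorem~\ref{Thm_k>0_conj} (the hypothesis on $\pi_1^{\et}(X)$ is unused, and since $\okappa(\A^1\times\G_m)=-\infty$ the exclusion in (2) is automatic), while (3) is vacuous. Thus the real content lies in the case $\okappa(X)=-\infty$.

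In that case I would prove (3) first. By Miyanishi's structure theorem every smooth affine surface with $\okappa=-\infty$ admits an $\A^1$-fibration $\pi\colon X\to B$ onto a smooth curve $B$; the key claim is that, after replacing $f$ by a suitable iterate, there exists $g\colon B\to B$ with $\pi\circ f=g\circ\pi$. The plan combines two ingredients: (i) a classification in the spirit of Gurjar--Zhang showing that for smooth affine surfaces with $\okappa=-\infty$ and $|\pi_1^{\et}(X)|=\infty$ the $\A^1$-fibration is essentially unique (for instance via the Makar--Limanov invariant), and (ii) a rigidity step saying that any finite surjective self-map must preserve such a distinguished fibration after iteration. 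The assumption $|\pi_1^{\et}(X)|=\infty$ is precisely what excludes the simply connected models (such as $\A^2$) which carry a genuine moduli of $\A^1$-fibrations shufflable by endomorphisms.

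Once (3) is established, parts (1) and (2) come from combining the induced base dynamics with the fiberwise $\A^1$-dynamics. For (1) I would pick $b\in B(\kk)$ in an adelic open subset on which $O_g(b)$ is Zariski dense in $B$ (AZO on curves is straightforward), then promote this to an adelic open subset of $X(\kk)$ by insisting that the fiberwise iterates of $f$ avoid a prescribed proper closed subset of $X$. For (2) the descent yields a product-type formula $d_1(f)=\max\{d_1(g),\lambda_{\text{fib}}\}$ together with a matching decomposition of the arithmetic degree $\alpha_f(P)$; combined with one-dimensional KSC on $B$ and for the induced polynomial dynamics on an $\A^1$-fiber, this gives $\alpha_f(P)=d_1(f)$ for any $P$ with Zariski dense orbit. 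The exclusion of $X\cong\A^1\times\G_m$ reflects the technical subtlety that on this particular surface polynomial fiber dynamics of arbitrary degree can interact with the base dynamics on $\G_m$ in a way not immediately controlled by the descent alone.

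The principal obstacle is the descent in (3). Once it is in hand, AZO and KSC reduce in a reasonably standard way to the one-dimensional case on $B$ and on the generic $\A^1$-fiber; without it, one has to contend with endomorphisms that can genuinely move the $\A^1$-fibration, which is exactly what the hypothesis $|\pi_1^{\et}(X)|=\infty$ is meant to forbid.
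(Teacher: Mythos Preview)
Your overall architecture---split on $\okappa(X)$, invoke Theorem~\ref{Thm_k>0_conj} when $\okappa(X)\ge 0$, and produce an $\A^1$-fibration $\pi\colon X\to B$ via open-surface theory when $\okappa(X)=-\infty$---matches the paper. But both your descent argument and your post-descent analysis diverge from what the paper actually does, and in each place your version is underspecified.

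\textbf{Descent.} You propose to obtain $g$ with $\pi\circ f=g\circ\pi$ via a uniqueness statement for the $\A^1$-fibration (Makar--Limanov invariant, Gurjar--Zhang). The paper never uses or proves such uniqueness. Instead it splits on $\okappa(B)$. If $\okappa(B)\ge 0$, descent is automatic from Lemma~\ref{l:descending}: a general fibre $F\cong\A^1$ cannot dominate $B$, so $f$ sends fibres into fibres. If $\okappa(B)=-\infty$ (so $B\cong\A^1$ or $\PP^1$), the hypothesis $|\pi_1^{\et}(X)|=\infty$ enters through Proposition~\ref{p:big_pi_1}: it forces at least two multiple fibres, one passes to an \'etale cover $X'\to X$ with $\pi'\colon X'\to B'$ having no multiple fibres but reducible fibres, lifts $f$ to $X'$, and deduces that $f|_B$ is an automorphism of finite order. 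Your ``uniqueness'' route may be salvageable, but it is not what carries the proof here.

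\textbf{After descent.} You propose to run AZO on $B$ and then ``promote'' to an adelic open of $X$, and to prove KSC via a product-type formula $d_1(f)=\max\{d_1(g),\lambda_{\text{fib}}\}$. The paper does neither. Its mechanism is that in almost every subcase $g=f|_B$ has \emph{finite order}, so $\kk(X)^f\neq\kk$ and both AZO and KSC are vacuous (Lemma~\ref{l_inv_fun_field}). Concretely: $\okappa(B)=-\infty$ gives finite order by Proposition~\ref{p:big_pi_1}; $\okappa(B)=1$ gives finite order by Lemma~\ref{l:finord}; and if $\okappa(B)=0$ but $\pi$ has a reducible or non-reduced fibre, Proposition~\ref{p:bad_fib} again forces finite order. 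The only surviving case is $\okappa(B)=0$ with $\pi$ an $\A^1$-bundle. Then either $B$ is an elliptic curve, where Lemma~\ref{lem:ell_base} compactifies $\pi$ to a $\PP^1$-bundle and invokes the known projective-surface results (Remark~\ref{r:AZO_KSC_surf}); or $B\cong\G_m$, where Lemma~\ref{lem:embd_in_P1xP1} gives $X\cong\A^1\times\G_m$, AZO comes from Theorem~\ref{ThmC} (an instance of Theorem~\ref{thmskewprodpoly}), and KSC is precisely the excluded case in (2). No product formula for $d_1$ or $\alpha_f$, and no ad hoc fibrewise promotion of adelic sets, is used.

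Finally, for (3) with $\deg(f)=1$: once descent is established the paper simply notes $d_1(f)=1$ (since $f|_B\in\Aut(B)$ and the fibres are $\A^1$), so KSC is trivial. Your sketch does not isolate this.
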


\begin{remark}
	On \cref{Thm_inf_pi1}, we have:
	\begin{enumerate}[leftmargin=2em]
		\item In the proof of \cref{Thm_inf_pi1},
		      we actually prove a stronger statement as follows (cf.~\cref{lem:pi_1-inv}).
		      Assume that the transcendence degree of $\kk$ over $\Q$ is finite.
		      Fix an embedding $\tau\colon\kk\hookrightarrow\C$
		      and denote by $X_{\C}$ the base change of $X$ by $\C$ via $\tau$.
		      If the fundamental group $\pi_1(X_{\C})$ is infinite,
		      then AZO~\ref{Conj:AZO} holds for $(X,f)$.
		\item The case where $\pi_1^{\et}(X)$ is finite seems harder,
		      but we remark that ZDO~\ref{Conj:ZDO} holds for surjective endomorphisms of $\A^2$,
		      where $\pi_1^{\et}(\A^2)=(1)$ (cf.~\cite{Xie17}).
		\item At the moment, we are not able to prove KSC~\ref{Conj:KSC} when $X\cong\A^1\times\G_m$.
	\end{enumerate}
\end{remark}

We return to higher-dimensional ZDO~\ref{Conj:ZDO} and KSC~\ref{Conj:KSC}.
In \cref{dyn-arith},
we first prove some basic properties of the arithmetic degree
for dominant rational self-maps $f\colon X\dashrightarrow X$ on singular varieties $X$.
In particular, \cref{proupboundarth} generalises the upper bound of the arithmetic degree
by the dynamical degree from smooth varieties to singular varieties
(cf.~\cite[Theorem 1.4]{Mat20}, also \cite{KS16} when $f$ is a morphism and $X$ is smooth projective).

In \cref{Thm_tri_poly} below,
we prove AZO~\ref{Conj:AZO} for a class of upper triangular rational self-maps.
It generalises the result in \cite[Theorem~7.16]{MS14} for ZDO~\ref{Conj:ZDO}
for split polynomial endomorphisms.
A surjective endomorphism of $X=\A^1\times\G_m$ is a special case of it
(cf.~\cref{ThmC}) and is used in the proof of \cref{Thm_inf_pi1}.

\begin{theorem}\label{Thm_tri_poly}
	Let $f\colon\A^m\dashrightarrow\A^m$ be a dominant rational self-map taking the form
	\[
		(x_1,\dots,x_m)\mapsto (f_1(x_1),f_2(x_1,x_2),\dots,f_m(x_1,\dots,x_m))
	\]
	where $f_1\in\kk(x_1),f_2\in\kk(x_1)[x_2],\dots,f_m\in\kk(x_1,\dots,x_{m-1})[x_m]$.

	Then AZO~\ref{Conj:AZO} and hence ZDO~\ref{Conj:ZDO} hold for $(\A^m,f)$.
\end{theorem}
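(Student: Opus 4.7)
The plan is induction on $m$. For the base case $m=1$, the map $f=f_1$ is a dominant rational self-map of $\A^1$ with $\kk(x_1)^{f_1}=\kk$; extending to $\PP^1$, AZO for dominant rational self-maps of $\PP^1$ is already known (cf.~\cite{Xie19}), and intersecting the resulting adelic open subset with the adelic open subset of points whose forward orbit avoids $\infty$ and the indeterminacy locus yields AZO for $(\A^1,f_1)$.

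For the inductive step, set $g=(f_1,\dots,f_{m-1})\colon\A^{m-1}\dashrightarrow\A^{m-1}$, which is again upper triangular, and let $\pi\colon\A^m\to\A^{m-1}$ be the projection to the first $m-1$ coordinates, so that $\pi\circ f=g\circ\pi$. Any $g$-invariant rational function on $\A^{m-1}$ pulls back through $\pi^{*}$ to an $f$-invariant function on $\A^m$, so the hypothesis $\kk(x_1,\dots,x_m)^f=\kk$ forces $\kk(x_1,\dots,x_{m-1})^g=\kk$. The inductive hypothesis therefore yields an adelic open $U\subseteq\A^{m-1}(\kk)$ every point of which has Zariski dense $g$-orbit. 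Observe also that dominance of the triangular map $f$ forces $\deg_{x_i}f_i\ge 1$ for every $i$; in particular $x_m\mapsto f_m(y,x_m)$ is a non-constant polynomial for each generic $y$, so $f$ induces a genuinely non-trivial polynomial map on each fibre of $\pi$, driven along the $g$-orbit of the base point.

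To lift $U$ to an adelic open $A\subseteq\pi^{-1}(U)\subseteq\A^m(\kk)$ of points with Zariski dense $f$-orbit, suppose $(y,x_m)\in\pi^{-1}(U)$ and $O_f(y,x_m)$ is contained in a proper Zariski closed $Z\subsetneq\A^m$. Since $\pi(O_f(y,x_m))=O_g(y)$ is Zariski dense in $\A^{m-1}$, some irreducible component of $Z$ must dominate $\A^{m-1}$; after replacing $f$ by an iterate, this component is $f$-invariant and defined by a polynomial $P$ of positive $x_m$-degree satisfying $P\circ f=\lambda\cdot P$ for some $\lambda\in\kk(x_1,\dots,x_m)^{\times}$. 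Fixing a non-archimedean place $v$ at which $f$ has good reduction, I plan to choose $x_m$ in a small $v$-adic disc over $y$ so that the non-autonomous iteration $x_m\mapsto f_m(g^n(y),x_m)\mapsto\cdots$ is $v$-adically expanding along the $g$-orbit; the iterates then cannot persistently land in the fibrewise-finite zero locus of $P$, contradicting $f$-invariance of $Z$. Intersecting these $v$-adic open conditions with $\pi^{-1}(U)$ produces the required $A$, and ZDO follows from AZO by Remark~\ref{rem:AZO_to_ZDO}.

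The main obstacle is the last step: uniformly controlling all $f$-invariant hypersurfaces $Z$ dominating $\A^{m-1}$ as $y$ varies over $U$, and making the fibrewise $v$-adic expansion estimate uniform. The triangular structure is essential here, because it forces $f_m$ to be a genuine polynomial in $x_m$ with coefficients rational in $x_1,\dots,x_{m-1}$; this polynomiality (as opposed to mere rationality) in the last coordinate is what lets the $v$-adic disc argument be propagated uniformly over the adelic open $U$.
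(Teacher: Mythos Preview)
Your inductive skeleton matches the paper exactly: the paper also proves Theorem~\ref{Thm_tri_poly} by induction on $m$, invoking Theorem~\ref{thmskewprodpoly} (the extension-by-a-polynomial theorem) at each step. The base case and the reduction $\kk(\A^m)^f=\kk\Rightarrow\kk(\A^{m-1})^g=\kk$ are fine.

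The genuine gap is in your lifting argument. You propose, given $y\in U$, to rule out each candidate $f$-invariant hypersurface $Z$ by a $v$-adic expansion estimate that prevents the orbit from staying in the fibrewise zero locus of a defining polynomial $P$. There are two problems. First, there is no a priori bound on the invariant hypersurfaces $Z$ (or on $\deg_{x_m}P$) as $(y,x_m)$ varies, so a countable family of $v$-adic conditions does not intersect to an adelic open set; you yourself flag this as ``the main obstacle'', and indeed it is not repaired by the sketch. Second, your expansion heuristic tacitly assumes $\deg_{x_m}f_m\ge 2$; when $\deg_{x_m}f_m=1$ the fibre map is affine and there is no attracting behaviour at infinity to exploit. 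The paper treats that case separately by invoking \cite[Theorem 3.32]{Xie19}.

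What the paper does instead, in the $\deg_{x_m}f_m\ge 2$ case, avoids enumerating invariant hypersurfaces altogether. Using \cite[Proposition 3.22]{Xie19} it first passes to an adelic open $B$ on which the orbit closure $Z_f(z)$ is \emph{irreducible}. It then chooses a prime $p$ and a $p$-adic polydisc $V$ in the base on which $g^{p^n}\to\id$, together with the disc $U=\{|x_m|\ge R\}\cup\{\infty\}$ in the $\PP^1$-fibre; for $z\in V\times U$ one has $f^{\ell+p^n}(z)\to (g^{\ell}(\pi(z)),\infty)$, hence $Z_g(\pi(z))\times\{\infty\}\subseteq Z_f(z)$. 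For $z$ in the intersection $\pi^{-1}(A)\cap B\cap C\cap(X\times\A^1)$ the closure $Z_f(z)$ is then irreducible, contains the whole section $X\times\{\infty\}$, and contains a point off that section, so it equals $X\times\PP^1$. This ``orbit closure contains the boundary'' mechanism is what replaces your hypersurface-avoidance step and is the missing idea in your proposal.
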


In the last part of the paper, we introduce a method to study ZDO~\ref{Conj:ZDO} via arithmetic degree.
Using this method, we obtain some sufficient conditions for ZDO~\ref{Conj:ZDO}
for endomorphisms of higher dimensional projective varieties.
Below is a sample result saying that ZDO~\ref{Conj:ZDO} holds true
for those $f$ with larger first dynamical degree $d_1(f)$.

\begin{theorem}\label{Thm_big_d1}
	Let $f\colon X\to X$ be a dominant endomorphism of a projective variety over $\overline{\Q}$.
	Then ZDO~\ref{Conj:ZDO} holds if one of the following conditions is satisfied.

	\begin{enumerate}[leftmargin=2em]
		\item $\dim X=2$, $d_1(f)>1$ and $d_1(f)\geq d_2(f)$; or
		\item $\dim X=3$, and $d_1(f)>d_3(f)=1$; or
		\item $X$ is smooth of dimension $d\geq 2$, and $d_1(f)>\max_{i=2}^d \{d_{i}(f)\}$.
	\end{enumerate}
\end{theorem}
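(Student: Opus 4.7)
The plan is to deploy the arithmetic degree as a certificate for Zariski density, reversing the direction of KSC~\ref{Conj:KSC}. Two facts drive the proof: the uniform upper bound $\alpha_f(x)\leq d_1(f)$ from Proposition~\ref{proupboundarth}, and the sharper estimate $\alpha_f(x)\leq d_1(f|_Y)$ whenever the orbit closure $Y:=\overline{O_f(x)}$ is a proper $f$-invariant subvariety, obtained by applying the same proposition to the restriction $f|_Y$. Hence to prove ZDO~\ref{Conj:ZDO} it suffices to produce a single point $x_0\in X(\overline{\Q})$ with $\alpha_f(x_0)=d_1(f)$ together with the inequality $d_1(f|_Y)<d_1(f)$ for every proper $f$-invariant closed $Y\subsetneq X$; then $O_f(x_0)$ cannot lie in such a $Y$ and must be Zariski dense.

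The first ingredient, producing $x_0$, exploits the spectral gap built into each hypothesis: $d_1(f)$ sits strictly above enough of the remaining dynamical degrees that the dominant eigenclass of $f^{*}$ on $N^1(X)_{\R}$ with eigenvalue $d_1(f)$ can be represented by a big $\R$-divisor. A Kawaguchi-Silverman-style canonical-height construction attached to this eigenclass, carried out in the possibly singular setting using the arithmetic-degree generalities developed in Section~\ref{dyn-arith}, then produces an algebraic point realising $\alpha_f(x_0)=d_1(f)$.

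The second ingredient is the dynamical-degree comparison $d_1(f|_Y)<d_1(f)$ on proper invariant subvarieties. In case~(1) the candidate $Y$ is either a point (trivially $\alpha_f=1<d_1(f)$) or an invariant curve $C$, for which $d_1(f|_C)=\deg(f|_C)\leq\deg(f)=d_2(f)\leq d_1(f)$; the boundary case is excluded by a short intersection argument on $X$ using $f^{*}C=C+D$ with $D$ effective, together with $d_1(f)>1$. Case~(2) is handled similarly once one notes that $d_3(f)=1$ forces $f$ to be birational: invariant curves give $\deg(f|_C)\leq 1<d_1(f)$, while invariant surfaces $S$ carry a birational restriction whose first dynamical degree is controlled by $d_2(f)$ via log-concavity $d_2^{2}\geq d_1\cdot d_3=d_1$ combined with the restriction inequality through ample classes. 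Case~(3) admits the cleanest statement: for smooth $X$, every $f$-invariant $Y\subsetneq X$ satisfies
\[
 d_1(f|_Y)\leq \max_{2\leq i\leq \dim X} d_i(f),
\]
proved by applying the Dinh-Sibony product formula on an $f$-equivariant modification of $X$ along (a resolution of) $Y$; by hypothesis this is strictly less than $d_1(f)$.

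The principal obstacle is this last dynamical-degree comparison on invariant subvarieties, since the naive bound $d_1(f|_Y)\leq d_1(f)$ is insufficient. Upgrading to the sharper bounds above requires an $f$-equivariant resolution of singularities of $Y$, control of the indeterminacies that $f$ may acquire on the modification, and a verification that the strict inequalities among dynamical degrees of $f$ are inherited by the restriction; extracting the genuinely \emph{strict} inequality in the boundary subcases of (1) and (2) is where the hypotheses $d_1(f)>1$ and $d_3(f)=1$ are consumed. Once this comparison is in place, combining the existence of $x_0$ with the exclusion of each proper invariant $Y$ yields Theorem~\ref{Thm_big_d1}.
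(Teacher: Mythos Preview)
Your overall strategy coincides with the paper's: exhibit a point $x_0$ with $\alpha_f(x_0)=d_1(f)$ and show every proper $f$-periodic subvariety has strictly smaller first dynamical degree (the paper packages this as the ``SDD condition'' and proves Theorem~\ref{Thm_big_d1} via Theorem~\ref{thmssdzdo} together with Propositions~\ref{prodim23ssd} and \ref{prodimhigdim}). But two of your steps do not go through as written.

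First, the nef eigenclass $D$ with $f^*D\equiv d_1(f)D$ is \emph{never} big under any of the three hypotheses: from $(f^*D)^{\dim X}=d_1(f)^{\dim X}D^{\dim X}$ and the projection formula $(f^*D)^{\dim X}=\deg(f)\,D^{\dim X}=d_{\dim X}(f)\,D^{\dim X}$ one gets $D^{\dim X}=0$ whenever $d_1(f)^{\dim X}\neq d_{\dim X}(f)$, and each hypothesis forces $d_1(f)^{\dim X}>d_{\dim X}(f)$. The paper bypasses bigness entirely by taking $V=C$ a general complete-intersection curve (so $(D\cdot C)>0$, Remark~\ref{rempickcd}), applying Proposition~\ref{proextarhitd} on $C$, and using Northcott to locate $x_0\in C(\overline\Q)\setminus Z$ with $\alpha_f(x_0)=d_1(f)$.

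Second, and more seriously, in cases (1) and (2) the strict inequality $d_1(f^m|_V)<d_1(f^m)$ can fail for some periodic hypersurfaces $V$: if $f^{-m}(V)=V$ then $\deg(f^m|_V)=\deg(f^m)=d_2(f^m)$, which equals $d_1(f^m)$ when $d_1=d_2$; and in case (2) an $f$-invariant surface $S$ may well have $d_1(f|_S)=d_1(f)$. Your ``$f^*C=C+D$ with $D$ effective'' tacitly assumes $D\neq 0$, which is precisely what fails for totally $f^{-1}$-invariant $C$. The paper's remedy (Proposition~\ref{prodim23ssd}) is a dichotomy via Cantat \cite{Can10}: either there are infinitely many $f^{-1}$-periodic hypersurfaces, whence $\kk(X)^f\neq\kk$ and ZDO is vacuous, or only finitely many, which are absorbed into an $f^{-1}$-invariant proper closed set $Z$; one then chooses the curve $C$ not contained in $Z$ and the point $x_0\in C\setminus Z$, so that any periodic subvariety through $O_f(x_0)$ is forced outside $Z$ and the strict SDD inequality applies there. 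Without this step your argument does not close in cases (1) and (2). In case (3) the paper (Proposition~\ref{prodimhigdim}) proves SDD for \emph{all} periodic $V$ by a direct computation in $\N^{d-\ell+1}(X)$ using $f^*(D\cdot V)\equiv b\,d_1(f)(D\cdot V)$ with $b\geq 1$; no product formula or equivariant resolution is needed.
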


\begin{ack}
	The first, second and fourth authors are supported by the President's scholarship,
	a Research Fellowship of NUS and ARF of NUS: A-8000020-00-00 and A-8002487-00-00;
	the third author is partially supported by the project ``Fatou'' ANR-17-CE40-0002-01.
	The authors would like to thank O.~Fujino, R.~V.~Gurjar, H.~Y.~Lin and Y.~Matsuzawa for very valuable discussions.
	The authors would also like
	to thank the referee for the very careful reading
	and suggestions to improve the paper.
\end{ack}

\section{General preliminary results}

\begin{setup}\textbf{Notation and Terminology}\label{NT}

	By a \emph{variety} we mean an algebraic variety,
	i.e., an integral separated scheme of finite type over the field $\kk$.
	On a smooth variety $V$,
	a reduced divisor $D$ is of \emph{simple normal crossing} (SNC for short)
	if every irreducible component of $D$ is smooth and locally $D=\{x_1\cdots x_s=0\}$
	at every point of $D$ with local coordinates $x_1,\dots,x_n$ of $V$.

	Let $X$ be a smooth quasi-projective variety.
	A pair $(V,D)$ is called a \emph{compactification} of $X$
	if $V$ is a projective variety containing $X$ as an open subset and $D=V\setminus X$.
	A compactification $(V,D)$ of $X$ is a \emph{log smooth compactification}
	if $V$ is smooth and $D$ is an SNC divisor.

	Given a log smooth compactification $(V,D)$ of $X$,
	the \emph{log Kodaira dimension} $\overline{\kappa}(X)$ is defined as the following Iitaka dimension
	\[
		\overline{\kappa}(X)=\kappa(V,K_V+D),
	\]
	which takes value in $\{-\infty,0,1,\dots,\dim X\}$.
	The characteristic property of $\kappa=\okappa(X)$ is:
	\[
		\alpha s^{\kappa}\leq\dim H^0(V,s(K_V+D))\leq\beta s^{\kappa}
	\]
	for some constants $0<\alpha<\beta$ and sufficiently large and divisible $s$.
	The definition of $\overline{\kappa}(X)$ is independent
	of the choice of the log smooth compactification.
	For details, see \cite[\S 11]{Iit82}.

	We use $\G_m$ to denote the $1$-dimensional \emph{algebraic torus},
	$\G_m^n$ the $n$-dimensional algebraic torus, and $\G_a$ the $1$-dimensional additive algebraic group.
	By a \emph{semiabelian variety} $X$, we mean the extension
	\[
		1\longrightarrow T\longrightarrow X\longrightarrow A\longrightarrow 1
	\]
	of an abelian variety $A$ by an algebraic torus $T$.
	A variety $X$ is called a \emph{\textit{Q}-algebraic torus}
	if there is a finite {\'e}tale cover $T\to X$ from an algebraic torus $T$.

	Let $X$ be a variety over $\kk$ and $f \colon X \dashrightarrow X$ a rational map.
	For any field extension $\kk \subseteq L$, denote by $(X_L, f_L)$ the base change of $(X,f)$ by $L$.
\end{setup}

\begin{lemma}\label{l:D_amp}
	Let $X$ be a smooth affine variety of dimension $d\geq 2$.
	Then we have:
	\begin{enumerate}[leftmargin=2em]
		\item Let $(V,D)$ be any compactification of $X$.
		      Then $D$ is the support of a connected big effective divisor.
		      Moreover, when $d=2$, $D$ is the support of a connected ample effective divisor.
		\item There is a log smooth compactification $(V,D)$ of $X$
		      such that $D$ is the support of a connected nef and big effective divisor.
	\end{enumerate}
\end{lemma}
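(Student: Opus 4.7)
The plan is to build a concrete big effective divisor supported on $D$ by first working on an explicit ample model $\overline{X}\subseteq\mathbb{P}^N$ and then transporting to the given $(V,D)$ via a common birational resolution, using the negativity lemma to preserve bigness under pushforward.

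\textbf{An ample model.} Using generators of $\mathcal{O}_X(X)$, embed $X\hookrightarrow\mathbb{A}^N$ as a closed subvariety and let $\overline{X}\subseteq\mathbb{P}^N$ be its closure. Then $A_0:=\overline{X}\cap H_\infty$, for $H_\infty$ the hyperplane at infinity, is an effective ample Cartier divisor on $\overline{X}$ with $\Supp(A_0)=\overline{X}\setminus X$. Resolve the birational map $V\dashrightarrow\overline{X}$ by passing to a smooth projective variety $W$ equipped with projective birational morphisms $p\colon W\to V$ and $q\colon W\to\overline{X}$, both isomorphisms over $X$ (if $V$ is singular I first replace it by a resolution, which is an isomorphism over the smooth affine $X$). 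Then $q^*A_0$ is a nef and big effective Cartier divisor on $W$, and its support equals $W\setminus X$: strict transforms of components of $\overline{X}\setminus X$ and $q$-exceptional divisors (which all map into $\overline{X}\setminus X=\Supp(A_0)$) appear with positive multiplicity in $q^*A_0$.

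\textbf{Pushing down and bigness via negativity.} Set $B:=p_*(q^*A_0)$, an effective Weil divisor on $V$. Pushforward kills the $p$-exceptional part of $q^*A_0$ but preserves strict transforms of the components of $D$ with their positive multiplicities inherited from $q^*A_0$, so $\Supp(B)=D$. To see that $B$ is big, consider $E:=p^*B-q^*A_0$. Since $p_*E=B-B=0$, the divisor $E$ is $p$-exceptional. For any $p$-exceptional curve $C\subset W$ one has $p^*B\cdot C=B\cdot p_*C=0$, while $q^*A_0\cdot C=A_0\cdot q_*C\geq 0$ by the ampleness of $A_0$; hence $-E$ is $p$-nef. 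The negativity lemma for birational morphisms then forces $E\geq 0$, so $p^*B\geq q^*A_0$. In particular $p^*B$ is big, and therefore so is $B$. Connectedness of $D$ is inherited from the connectedness of $\Supp(A_0)$ (which holds by Hartshorne's connectedness theorem, as $A_0$ is ample on an irreducible projective variety of dimension $\geq 2$) via the connected fibers of $p$ and $q$: $W\setminus X=q^{-1}(\Supp A_0)$ is connected, and $D=p(W\setminus X)$ is its continuous image.

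\textbf{The $d=2$ case and part (2).} For (1) with $d=2$, the required ample effective divisor supported on $D$ is furnished by the classical surface-case result of Goodman: on a smooth projective surface, a reduced connected divisor whose complement is affine supports an effective ample Cartier divisor. The idea is to combine the bigness of $B$ (and its Zariski decomposition) with Nakai--Moishezon, using that any irreducible curve not contained in $D$ must meet $D$, for otherwise it would be a complete curve inside the affine variety $X$. For part (2), I instead take a log resolution $\sigma\colon V'\to\overline{X}$ of the pair $(\overline{X},\overline{X}\setminus X)$ which is an isomorphism over $X$; then $V'$ is a log smooth compactification of $X$, the divisor $\sigma^*A_0$ is effective nef and big with support $\sigma^{-1}(\Supp A_0)=V'\setminus X$, and connectedness of $V'\setminus X$ follows from that of $\overline{X}\setminus X$ exactly as in the previous paragraph. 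The main technical obstacle throughout is to ensure that no component of the boundary is dropped when moving between birational models of $X$; this is precisely what the multiplicity argument in the first step, together with the negativity lemma in the second step, is designed to control.
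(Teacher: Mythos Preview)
Your approach is essentially the paper's: embed $X$ in $\PP^N$ to get an ample boundary $A_0$ on $\overline{X}$, pull back to a log resolution for (2), and for (1) dominate both $V$ and $\overline{X}$ by a common smooth model $W$ to transport bigness down to $D$. Where the paper simply asserts that the pushforward to $V$ of the big divisor on $W$ is big, you justify this via the negativity lemma; an even shorter route is to observe that for a birational morphism $\pi\colon W\to V$ between normal varieties and an effective divisor $L$ on $W$ one has $H^0(W,mL)\hookrightarrow H^0(V,m\pi_*L)$, by comparing the valuation along each prime divisor of $V$ with that along its strict transform.

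One small gap: your connectedness argument for $W\setminus X=q^{-1}(\Supp A_0)$ assumes the fibres of $q\colon W\to\overline{X}$ are connected, but $\overline{X}$ need not be normal, so Zariski's Main Theorem does not apply directly. You can repair this by normalising $\overline{X}$ first (the pullback of $A_0$ stays ample), or---as the paper does---bypass the issue entirely by citing \cite[Ch.~II, \S3]{Har70} for the connectedness and pure codimension~$1$ of the complement of an affine open in any irreducible projective variety of dimension $\geq 2$, applied directly to $(V,D)$ or to $(W,W\setminus X)$.
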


\begin{proof}
	Let $(V,D)$ be any compactification of $X$.
	By~\cite[Ch.~\RomanNumeralCaps{2}, \S 3]{Har70}, $D$ has pure codimension $1$ and is connected.
	Embedding $X$ to an affine space $\A^n$ and taking its (normalised) closure in $\PP^n$,
	we can take a compactification $(V_0,D_0)$ of $X$
	such that $D_0$ is the support of an ample effective divisor $D_0'$
	of a normal projective variety $V_0$.
	Taking a log resolution of $(V_0,D_0)$,
	we obtain a log smooth compactification $(V',D')$ of $X$ such that
	$D'$ is the support of a connected effective nef and big divisor (the pullback of $D_0'$),
	which proves (2).
	Let $(V'',D'')$ be another log smooth compactification of $X$ dominating both $(V',D')$ and $(V,D)$.
	Then $D''$ is the inverse image of $D_0$ and hence the support of a connected effective big divisor,
	and $D$ is the image of $D''$ and hence also the support of a connected effective big divisor.
	For the case $\dim X=2$, see~\cite[Ch.~\RomanNumeralCaps{2}, \S 4]{Har70}.
\end{proof}

\begin{lemma}\label{l:exc}
	Let $V$ be a smooth projective surface with a $\PP^1$-fibration $\pi\colon V\to B$
	to a smooth projective curve $B$.
	Let $F$ be any fibre of $\pi$.
	If $F$ contains just one $(-1)$-curve,
	then its coefficient in $F$ is at least $2$.
\end{lemma}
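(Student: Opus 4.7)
The plan is to argue by contradiction using an intersection-number computation against $K_V$. Let $E$ denote the unique $(-1)$-curve in $F$, and assume, for contradiction, that the coefficient of $E$ in $F$ equals $1$. Write $F = E + D$ with $D = F - E = \sum_i m_i C_i$ an effective divisor whose irreducible components $C_i$ are all distinct from $E$.

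First I would compute $K_V \cdot D$ directly. Any fibre of a $\PP^1$-fibration on a smooth projective surface satisfies $F^2 = 0$ (distinct fibres are disjoint and algebraically equivalent) and $K_V \cdot F = -2$ (by adjunction applied to a general fibre $\cong \PP^1$). Together with $K_V \cdot E = -1$, this yields
\[
K_V \cdot D \;=\; K_V \cdot F - K_V \cdot E \;=\; -2 - (-1) \;=\; -1.
\]

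Next, I would show $K_V \cdot C_i \geq 0$ for every component $C_i$ of $D$, which contradicts the above after summing with multiplicities. First note that $F$ must have at least two irreducible components: otherwise $F = mE$ for some $m \geq 1$, forcing $F^2 = -m^2 \neq 0$, contrary to $F^2 = 0$. Zariski's lemma then gives $C_i^2 < 0$ for every component of $F$. Since $E$ is the only $(-1)$-curve in $F$, no $C_i$ is a $(-1)$-curve, and adjunction $K_V \cdot C_i = 2 p_a(C_i) - 2 - C_i^2$ yields $K_V \cdot C_i \geq 0$ in both possible cases: either $C_i^2 \leq -2$, in which case the bound is immediate from $p_a(C_i) \geq 0$; or $C_i^2 = -1$ with $C_i$ not a $(-1)$-curve, which forces $p_a(C_i) \geq 1$ (an irreducible reduced curve of arithmetic genus zero on a smooth surface being automatically $\cong \PP^1$) and hence $K_V \cdot C_i \geq 1$. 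Summing with multiplicities gives $K_V \cdot D = \sum_i m_i (K_V \cdot C_i) \geq 0$, contradicting the displayed formula.

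The argument is largely mechanical once the setup is in place; the only small point that requires care is the second branch of the case analysis, where one must rule out a non-smooth component of self-intersection $-1$. This is dispatched by the standard observation that on a smooth surface an irreducible reduced curve of arithmetic genus zero must already be isomorphic to $\PP^1$, so such a $C_i$ would violate the uniqueness of $E$ as the single $(-1)$-curve in $F$.
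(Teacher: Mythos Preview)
Your argument is correct. The paper's own proof is a one-line sketch (``by induction on the number of irreducible components of $F$''), which points to the standard blow-down argument: contract the unique $(-1)$-curve $E$, track how multiplicities and self-intersections change, and appeal to the inductive hypothesis on the resulting smaller fibre. Your route is different and more direct: rather than unwinding the blow-up history, you compute $K_V\cdot D$ in two ways and derive a contradiction in a single stroke. This has the advantage of being entirely self-contained and avoiding any discussion of what happens to adjacent components under contraction.

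One small simplification you could make: in a $\PP^1$-fibration on a smooth projective surface every fibre is obtained from a smooth $\PP^1$ by iterated blow-ups, so every irreducible fibre component is already a smooth rational curve. Hence your ``second branch'' ($C_i^2=-1$ with $p_a(C_i)\geq 1$) is in fact vacuous here, and the case analysis collapses to just $C_i^2\leq -2$. Your handling of that branch is nonetheless correct and does no harm; it simply makes the argument work in slightly greater generality than needed.
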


\begin{proof}
	We remark that every fibre of the \(\PP^1\)-fibration $\pi$ consists of \(\PP^1\)'s
	and has a tree as its dual graph.
	Now the lemma follows by induction on the number of irreducible components of \(F\).
\end{proof}

\begin{definition}
	Let $X$ be a smooth quasi-projective surface
	and $\pi\colon X\to B$ a $\G_m$-bundle over a smooth curve $B$.
	We say that $\pi$ is \emph{untwisted} if some (and hence every) compactification
	$\overline{\pi}\colon\overline{X}\to\overline{B}$ of $\pi\colon X\to B$
	has exactly two cross-sections as the horizontal part of the boundary $\overline{X}\setminus X$.
\end{definition}

\begin{example}
	Take any Hirzebruch surface, with a ruling:
	\(\overline{\pi}\colon \overline{X}\to \overline{B}\)
	and let \(D \subseteq \overline{X}\) be an irreducible (resp.~reducible) curve
	so that there is a finite morphism \(\overline{\pi}|_D \colon D\to \overline{B}\) of degree \(2\).
	Let \(\{b_j\} \subseteq \overline{B}\) be a finite set such that
	\(\overline{\pi}|_D\) restricts to an \'etale cover
	\(H = D \setminus (\overline{\pi}|_D)^{-1}(\{b_j\}) \to B = \overline{B} \setminus \{b_j\}\).
	Let \(X = \overline{\pi}^{-1}(B) \setminus H\).
	Then \(\pi := \overline{\pi}|_X\colon X \to B\) is a twisted (resp.~untwisted) $\G_m$-bundle.
	Indeed, every other compactification of \(X \to B\) is
	obtained from this \(\overline{\pi}\) by blowing up or down fibre components
	(but not the horizontal boundary components),
	so the reducibility or irreducibility of the horizontal part
	will not change even in different compactification of \(X \to B\).
\end{example}

\begin{lemma}\label{l:cont}
	Let $X$ be a smooth quasi-projective surface
	and $\pi\colon X\to B$ a $\G_m$-bundle over a smooth curve $B$.
	Then we can take log smooth compactifications $X\subseteq\overline{X}$, $B\subseteq\overline{B}$
	such that $\pi$ extends to a $\PP^1$-bundle $\overline{\pi}\colon\overline{X}\to\overline{B}$.
	Moreover, if $\pi$ is untwisted,
	then we can take them such that $\overline{X}\setminus X$ consists of
	exactly two disjoint cross-sections and some fibres.
\end{lemma}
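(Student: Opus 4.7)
My plan is to build $\overline{X}$ in two stages: first produce a $\PP^1$-bundle compactification over a smooth projective compactification $\overline{B}$ of $B$; then, in the untwisted case, apply elementary transformations to separate the two horizontal boundary sections. As a preliminary, I would fix a smooth projective compactification $\overline{B}$ of $B$ and, by Hironaka's resolution, a log smooth compactification $(V, D_V)$ of $X$ with $\pi$ extending to a morphism $\pi_V\colon V \to \overline{B}$. Since the generic fibre of $\pi_V$ is a smooth proper compactification of $\G_m$, hence $\PP^1$, the morphism $\pi_V$ is a $\PP^1$-fibration.

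The heart of the argument is a relative MMP over $\overline{B}$ that only contracts $(-1)$-curves contained in $D_V$. The key claim is that every reducible fibre $F$ of $\pi_V$ contains such a contractible $(-1)$-curve. If $F$ lies over a point of $B$, the component $F_0$ meeting $X$ has multiplicity $1$ in $F$ (as the corresponding fibre of $\pi$ is the reduced scheme $\G_m$), while by Lemma~\ref{l:exc} a reducible fibre with a unique $(-1)$-curve has that curve of multiplicity at least $2$; hence $F_0$ cannot be the unique $(-1)$-curve, so either $F_0$ itself fails to be a $(-1)$-curve and some $(-1)$-curve among the other components of $D_V$ exists, or $F_0$ is a $(-1)$-curve but coexists with another $(-1)$-curve lying in $D_V$. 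If $F$ lies over $\overline{B} \setminus B$, then $F \subseteq D_V$ entirely, and any $(-1)$-curve in $F$ is automatically contractible within $D_V$. Iterating these contractions terminates at a $\PP^1$-fibration $\overline{\pi}\colon \overline{X} \to \overline{B}$ whose fibres are all irreducible and reduced, that is, at a $\PP^1$-bundle (Tsen's theorem applied to the generic Severi--Brauer fibre gives Zariski-local triviality).

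For the moreover statement, assume $\pi$ is untwisted, so the horizontal part of $\overline{X} \setminus X$ consists of exactly two cross-sections $S_0, S_\infty$. Over points of $B$ they are automatically disjoint, since in each $\G_m$-fibre they pass through the two distinct added points $\{0, \infty\}$ of the $\PP^1$-compactification; hence $S_0 \cap S_\infty$ lies in the finitely many fibres over $\overline{B} \setminus B$, and transversality at each such intersection is inherited from the SNC structure of $(V, D_V)$, because the contractions performed in the MMP only affected leaves of reducible fibres, all disjoint from the sections. At each intersection point I would apply an elementary transformation -- blow up the point, then contract the strict transform of the fibre through it -- which preserves the $\PP^1$-bundle structure and removes that one intersection without creating new ones elsewhere. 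After finitely many such moves, $S_0 \cap S_\infty = \varnothing$, and $\overline{X} \setminus X$ then consists of these two disjoint smooth sections together with the finitely many smooth fibres over $\overline{B} \setminus B$, which is visibly an SNC divisor.

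The principal obstacle I foresee is the MMP step, specifically ensuring at every stage that some $(-1)$-curve contractible within $D_V$ exists in each reducible fibre. This is exactly what the multiplicity-one observation for the main component $F_0$, combined with Lemma~\ref{l:exc}, is designed to provide.
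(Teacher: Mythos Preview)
Your approach is essentially the same as the paper's: start from an arbitrary log smooth compactification, use Lemma~\ref{l:exc} to locate a $(-1)$-curve distinct from the closure $F_0$ of the $\G_m$-fibre in each reducible fibre, contract down to a $\PP^1$-bundle, and then in the untwisted case perform elementary transformations over $\overline{B}\setminus B$ to separate the two sections.

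One point to flag: your justification that $S_0$ and $S_\infty$ meet \emph{transversally} over $\overline{B}\setminus B$ is not correct as stated. You claim that the $(-1)$-curves contracted in the MMP are ``leaves \dots\ disjoint from the sections'', but over $\overline{B}\setminus B$ the entire fibre lies in $D_V$ and a $(-1)$-curve there may well meet one or both sections; after contracting such a curve the sections can acquire a tangency. This does not break the argument, since a single elementary transformation at a point of $S_0\cap S_\infty$ still strictly decreases the local intersection multiplicity, so finitely many iterations separate them. The paper simply says ``blowing up and down repeatedly in $\overline{X}\setminus X$'' without claiming transversality, which sidesteps the issue; you should do likewise rather than assert transversality.
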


\begin{proof}
	Take log smooth compactifications $X\subseteq\overline{X}$,
	$B\subseteq\overline{B}$ such that
	$\pi$ extends to $\overline{\pi}\colon\overline{X}\to\overline{B}$.
	Take any point $b\in\overline{B}$ and set $F=\overline{\pi}^*b$.
	Suppose $F$ is reducible.
	If $b\in\overline{B}\setminus B$,
	then $F\subseteq\overline{X}\setminus X$,
	and we can contract $(-1)$-curves so that $F$ is irreducible.
	Assume $b\in B$.
	Let $F_0\cong\PP^1$ be the closure of $\pi^{-1}(b)$ in $\overline{X}$.
	Now $F$ contains at least one $(-1)$-curve $F_1$ since $\overline{\pi}$ is a $\PP^1$-fibration.
	If $F_1$ is the only $(-1)$-curve in $F$,
	then its multiplicity in $F$ is at least $2$ by \cref{l:exc}.
	So $F_1\neq F_0$ in this case.
	As a consequence, we have a $(-1)$-curve $F_1\neq F_0$ in $F$.
	We contract $F_1$ (without touching $X$) and continue this process;
	eventually we can make $F$ irreducible (and smooth).
	Thus we can assume $\overline{\pi}$ is a $\PP^1$-bundle.

	Assume that $\pi$ is untwisted.
	Let $H_1,H_2\subseteq\overline{X}\setminus X$ be two cross-sections.
	If the intersection $H_1\cap H_2\neq\varnothing$,
	it lies in $\overline{X}\setminus X$ since $\pi^{-1}(b)=\G_m$ for every $b\in B$.
	So we can make $H_1$ and $H_2$ disjoint
	by blowing up and down repeatedly in $\overline{X}\setminus X$.
\end{proof}

\begin{lemma}\label{l:smooth_K*}
	Let $X$ be a smooth quasi-projective surface
	and $\pi\colon X\to B$ a $\G_m$-bundle over a smooth curve $B$.
	Then we have:
	\begin{enumerate}[leftmargin=2em]
		\item There is a finite {\'e}tale cover $B'\to B$ of degree $\leq 2$
		      inducing a finite {\'e}tale cover $X'\coloneqq X\times_B B'\to X$
		      and a $\G_m$-bundle $X'\to B'$ which is untwisted.
		\item The log Kodaira dimensions satisfy
		      $\overline{\kappa}(X')=\overline{\kappa}(X)=\overline{\kappa}(B)=\overline{\kappa}(B')$.
		\item Assume that $\pi$ is untwisted and $B$ is a smooth rational affine curve.
		      Then $\pi$ is a trivial $\G_m$-bundle.
	\end{enumerate}
\end{lemma}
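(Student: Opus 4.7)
\emph{Proof plan.} The plan is to reduce to the untwisted case by a canonical degree-$2$ étale cover of the base (for (1)), then use adjunction on a $\PP^1$-bundle compactification with two disjoint sections to compare log canonical divisors of $X'$ and $B'$ (for (2)), and finally exploit that $\Pic$ vanishes on any open subset of $\PP^1$ (for (3)).

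For (1), apply Lemma~\ref{l:cont} to obtain a compactification with $\overline{\pi}\colon\overline{X}\to\overline{B}$ a $\PP^1$-bundle, and let $H$ denote the horizontal part of $\overline{X}\setminus X$. Since every fibre of $\pi$ is $\G_m\subseteq\PP^1$, the restriction $H|_B\to B$ is a finite étale cover of degree $2$. If $H|_B$ splits into two components, they are disjoint sections and $\pi$ is already untwisted, so take $B'\coloneqq B$. Otherwise $H|_B$ is an irreducible connected étale double cover of $B$, and I set $B'\coloneqq H|_B$; since for a Galois $\Z/2$-cover one has $B'\times_B B'=B'\sqcup B'$ (diagonal plus swap), pulling back the $\PP^1$-bundle makes the horizontal boundary of $X'=X\times_B B'$ split into two disjoint sections, so $X'\to B'$ is untwisted.

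For (2), the equalities $\overline{\kappa}(X)=\overline{\kappa}(X')$ and $\overline{\kappa}(B)=\overline{\kappa}(B')$ follow from the standard invariance of log Kodaira dimension under finite étale covers. To prove $\overline{\kappa}(X')=\overline{\kappa}(B')$, write the untwisted compactification as $\overline{X'}=\PP_{\overline{B'}}(\mathcal{O}\oplus L)$ for a line bundle $L$ on $\overline{B'}$, with summand sections $S_0, S_\infty$ and vertical boundary $\overline{\pi'}^*D_{\overline{B'}}$ where $D_{\overline{B'}}=\overline{B'}\setminus B'$. Setting $h=c_1(\mathcal{O}_{\PP}(1))$, a direct computation yields $[S_0]+[S_\infty]=2h-\overline{\pi'}^*c_1(L)$ (each section is the zero locus of a tautological section of $\mathcal{O}(1)$ twisted by the dual summand) and $K_{\overline{X'}/\overline{B'}}=-2h+\overline{\pi'}^*c_1(L)$ from $\omega_{\PP(V)/B}=\mathcal{O}(-2)\otimes\overline{\pi'}^*\det V$. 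Hence $K_{\overline{X'}/\overline{B'}}+S_0+S_\infty=0$ and so $K_{\overline{X'}}+S_0+S_\infty=\overline{\pi'}^*K_{\overline{B'}}$. Adding the vertical boundary gives $K_{\overline{X'}}+D_{\overline{X'}}=\overline{\pi'}^*(K_{\overline{B'}}+D_{\overline{B'}})$, and since $\overline{\pi'}$ is surjective with connected fibres the Iitaka dimensions of pullback and base agree, yielding the desired equality.

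For (3), the untwisted compactification identifies $X$ with the complement of the zero section in the line bundle $L|_B$ on $B$. A smooth rational affine curve $B\cong\PP^1\setminus\{p_1,\dots,p_k\}$ with $k\geq 1$ satisfies $\Pic(B)=\Pic(\PP^1)/\langle [p_1],\dots,[p_k]\rangle=0$, so $L|_B\cong\mathcal{O}_B$ and $\pi$ is the trivial $\G_m$-bundle $B\times\G_m\to B$. The main obstacle will be the sign bookkeeping in the adjunction computation of (2): ensuring that in the chosen convention the $\overline{\pi'}^*c_1(L)$ contributions from $[S_0]+[S_\infty]$ and from $K_{\overline{X'}/\overline{B'}}$ cancel exactly; once this is in place, (1) reduces to an étale base change and (3) to a standard $\Pic$-triviality argument.
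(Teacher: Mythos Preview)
Your proof is correct. Parts (1) and (2) follow the same strategy as the paper: (1) uses the irreducible horizontal boundary component as an étale double cover of $B$ (the paper phrases the splitting after base change via the diagonal section, exactly as you do), and (2) reduces by étale invariance to the untwisted case and then computes $K_{\overline{X'}}+D_{\overline{X'}}\sim\overline{\pi'}^*(K_{\overline{B'}}+D_{\overline{B'}})$ on a $\PP^1$-bundle with two disjoint sections. The paper simply cites Hartshorne Ch.~V, Proposition~2.9 and Lemma~2.10 for this identity, whereas you carry out the Chern-class computation on $\PP(\mathcal{O}\oplus L)$ explicitly; your sign bookkeeping is fine and the cancellation $K_{\overline{X'}/\overline{B'}}+S_0+S_\infty=0$ is exactly what is needed.

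For (3) you take a genuinely different route. The paper argues geometrically: starting from the $\PP^1$-bundle compactification over $\overline{B}=\PP^1$ with two disjoint sections and $r\geq 1$ boundary fibres, it performs elementary transformations centered at points over $\overline{B}\setminus B$ (blow up a point on a boundary fibre, blow down the proper transform of that fibre) until $\overline{X}\cong\PP^1\times\PP^1$, all without touching $X$; then $X$ is visibly $\G_m\times B$. Your argument is more direct and algebraic: identify $X$ with the $\G_m$-torsor $L^\times|_B$ inside $\PP(\mathcal{O}\oplus L)|_B$ and invoke $\Pic(B)=0$ for a rational affine curve. Your approach is shorter and makes transparent why the hypothesis ``rational affine'' is exactly what is needed; the paper's approach has the mild advantage of staying within the birational-surface toolkit used elsewhere in the paper and of exhibiting the trivialization via modifications supported entirely on the boundary.
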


\begin{proof}
	(1) Take log smooth compactifications $X\subseteq\overline{X}$,
	$B\subseteq\overline{B}$ such that $\pi$
	extends to $\overline{\pi}\colon\overline{X}\to\overline{B}$.
	Let $D=\overline{X}\setminus X=H+E$
	where $H$ is the sum of horizontal components and $E$ is that of vertical components.
	If $H$ is irreducible,
	then $H\to\overline{B}$ restricts to a finite {\'e}tale cover $B'\to B$ of degree \(2\),
	since $X\to B$ is a $\G_m$-bundle.
	Let $\overline{B'}\to H$ be the normalisation,
	$X'=X\times_B B'$ and $\overline{X'}$ the normalisation of
	$\overline{X}\times_{\overline{B}}\overline{B'}$.
	Then the projection $X'\to X$ is a finite {\'e}tale cover,
	and the projection $X'\to B'$ which is still a $\G_m$-bundle will fit the next case.
	Indeed, the inverse of $H$ ($\subseteq\overline{X}$) in $\overline{X'}$
	is a double section containing a cross-section,
	the compactification of $\{(b',b')\in X'\mid b'\in B'\}$,
	so it is the sum of two cross-sections.

	(2) Since $X'\to X$ and $B'\to B$ are {\'e}tale,
	we have $\overline{\kappa}(X')=\overline{\kappa}(X)$
	and $\overline{\kappa}(B')=\overline{\kappa}(B)$ (cf.~\cite[Theorem 11.10]{Iit82}).
	From now on, we assume that $\pi\colon X\to B$ is untwisted.
	Let $H_1, H_2$ be cross-sections of $\overline{\pi}$ contained in $D$.
	By \cref{l:cont}, we may assume that $\overline{\pi}$ is a $\PP^1$-bundle and $H_1, H_2$ are disjoint.
	Then we have $K_{\overline{X}}+D\sim\overline{\pi}^*(K_{\overline B}+L)$
	where $L=\overline{B} \setminus B$
	(cf.~\cite[Ch.~\RomanNumeralCaps{5}, Proposition 2.9, Lemma 2.10]{Har77}).
	Thus $\overline{\kappa}(X)=\overline{\kappa}(B)$.

	(3) Assume $B=\PP^1\setminus(r\text{ points})$ with $r\geq 1$.
	Take log smooth compactifications $X\subseteq\overline{X}$, $B\subseteq\overline{B}$,
	as in \cref{l:cont},
	such that $\pi$ extends to a $\PP^1$-bundle $\overline{\pi}\colon\overline{X}\to\overline{B}$,
	and the boundary $D=\overline{X}\setminus X$ consists of
	exactly two disjoint cross-sections $H_1,H_2$ and $r$ of fibres.
	By blowing up a point in a fibre over $\overline{B}\setminus B$
	and blowing down proper transform of the fibre
	and repeating the same process if necessary,
	we can make $\overline{X}\cong\PP^1\times\PP^1$ without touching $X$,
	and yet keep $\overline{X}\setminus X$ being a union of two disjoint cross-sections and $r$ of fibres.
	Thus $\pi$ is a trivial $\G_m$-bundle.
\end{proof}

\begin{corollary}\label{c:smooth_K*}
	Let $X$ be a smooth affine surface and $\pi\colon X\to B$ a $\G_m$-bundle to a smooth curve $B$.
	Then $B$ is affine.
\end{corollary}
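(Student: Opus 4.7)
The plan is to argue by contradiction. A smooth irreducible curve is either affine or projective, so if $B$ is not affine then $B$ is projective, and I aim to derive a contradiction from the fact that $X$ is affine. First I would reduce to the case where $\pi$ is untwisted by applying Lemma~\ref{l:smooth_K*}(1): the resulting finite {\'e}tale double cover $B'\to B$ remains projective, while $X'=X\times_B B'\to X$, being finite over the affine $X$, is still affine and smooth of dimension $2$, and $X'\to B'$ is an untwisted $\G_m$-bundle. Replacing $(X,\pi,B)$ by $(X',\pi',B')$, I may therefore assume $\pi$ itself is untwisted.

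Next, using Lemma~\ref{l:cont}, I would choose a log smooth compactification extending $\pi$ to a $\PP^1$-bundle $\overline{\pi}\colon\overline{X}\to B$ whose boundary $D=\overline{X}\setminus X$ consists of exactly two disjoint cross-sections $H_1$ and $H_2$; no vertical boundary components appear, because $B=\overline{B}$ is already projective. Then Lemma~\ref{l:D_amp}(1), applied to the smooth affine surface $X$ with this compactification, supplies an ample effective divisor $A$ with $\Supp A=D$; write $A=aH_1+bH_2$ with $a,b>0$. Intersecting with each section and using $H_1\cdot H_2=0$ yields
\[
  0<A\cdot H_1=aH_1^2,\qquad 0<A\cdot H_2=bH_2^2,
\]
so both self-intersections are positive.

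To finish I would invoke the standard fact that two disjoint cross-sections of a $\PP^1$-bundle over a smooth projective curve satisfy $H_1^2+H_2^2=0$: since $(H_1-H_2)\cdot F=0$ on every fibre $F$, one has $H_1\sim H_2+\overline{\pi}^*M$ for some $M\in\Pic(B)$, whence $H_1^2=\deg M$ and $H_2^2=-\deg M$. This is incompatible with $H_1^2>0$ and $H_2^2>0$, a contradiction. No step appears to be a real obstacle; the one point worth being careful about is the preservation of affineness under the {\'e}tale cover $X'\to X$, which is immediate from the fact that finite morphisms to an affine scheme have affine source.
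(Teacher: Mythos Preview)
Your argument is correct, and the overall architecture matches the paper's proof: both reduce to the untwisted case via Lemma~\ref{l:smooth_K*}(1), then invoke Lemma~\ref{l:cont} to realise the boundary $D=\overline{X}\setminus X$ as a disjoint union $H_1\sqcup H_2$ of two cross-sections (with no vertical part since $B=\overline{B}$), and finally derive a contradiction from Lemma~\ref{l:D_amp}.

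The only difference lies in how the contradiction is extracted. The paper simply observes that $D=H_1\sqcup H_2$ is \emph{disconnected}, which already contradicts the connectedness clause of Lemma~\ref{l:D_amp}(1); no intersection computation is needed. You instead use the ampleness clause to write $A=aH_1+bH_2$ and then run a numerical argument showing $H_1^2>0$, $H_2^2>0$, incompatible with $H_1^2+H_2^2=0$. Your route is sound, but it is a detour: the connectedness of $D$ is part of the very statement you are citing, and it kills the projective-$B$ case in one line.
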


\begin{proof}
	By \cref{l:smooth_K*}, taking a finite {\'e}tale base change, we may assume that $\pi$ is untwisted.
	By \cref{l:cont}, we can embed $\pi$ to a $\PP^1$-bundle $\overline{X}\to\overline{B}$,
	such that $D\coloneqq\overline{X}\setminus X$ is the sum of two disjoint cross-sections
	and the fibres over $\overline{B}\setminus B$.
	If $\overline{B}=B$, then $D$ is not connected, a contradiction to \cref{l:D_amp}.
	So $B$ is not projective.
\end{proof}

\begin{definition}\label{d:mult_fib}
	Let $\pi\colon X \to B$ be a surjective morphism from a quasi-projective surface to a smooth curve.
	The \emph{multiplicity} $m=m(F)$ of a fibre $F$ of $\pi$
	is the greatest common divisor of coefficients in $F$ of all irreducible components of $F$.
	We call $F$ a {\it multiple fibre} if $m(F)\geq 2$.
\end{definition}

\begin{proposition}[cf.~{\cite[Lemma 1.1.9]{GMM21}}]\label{p:fat_fib}
	Let $X$ be a smooth quasi-projective surface
	and $\pi\colon X\to B$ a surjective morphism to a smooth curve $B$.
	Let $F_j$ $(1\leq j\leq r)$ be all multiple fibres of $\pi$ over $b_j$, with multiplicity $m_j\geq 2$.
	Assume the hypothesis $(\star)$:
	either $B$ is affine, or $B$ is irrational, or $r\geq 3$, or $r=2$ and $m_1=m_2$.
	Then we have:
	\begin{enumerate}[leftmargin=2em]
		\item There is a finite morphism $B'\to B$ from a smooth curve,
		      {\'e}tale over $B_0\coloneqq B\setminus\{b_1,\ldots,b_r\}$,
		      with ramification index $m_j$ over $b_j$,
		      and with $X'$ the normalisation of $X\times_B B'$,
		      such that the projection $X'\to X$ is finite {\'e}tale
		      and the induced fibration $X'\to B'$ has no multiple fibre.
		\item Assume further that every fibre of $\pi$ has support isomorphic to $\G_m$.
		      Then the fibration $X'\to B'$ is a $\G_m$-bundle over $B'$.
	\end{enumerate}
\end{proposition}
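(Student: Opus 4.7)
The plan is to construct a finite cover $\nu\colon B'\to B$ from a smooth curve, \'etale over $B_0$, with ramification index exactly $m_j$ at each preimage of $b_j$. With such $\nu$ in hand, set $Y\coloneqq X\times_B B'$ and let $X'\to Y$ be the normalisation. A local analysis at the multiple fibres will then show that $X'\to X$ is finite \'etale and that the ramification of $\nu$ exactly cancels the multiplicities $m_j$, so that $X'\to B'$ has no multiple fibre. Part (2) will follow because all fibres of $X'\to B'$ will then be reduced copies of $\G_m$.

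The construction of $\nu$ is the main classical input, and it is precisely here that hypothesis $(\star)$ enters: $(\star)$ rules out the bad orbifolds $(B;m_1,\dots,m_r)$, namely $B\cong\PP^1$ with $r\leq 1$ or with $r=2$ and $m_1\neq m_2$, for which no smooth cover with the required ramification exists. When $B$ is affine, $\pi_1(B_0)$ is free, so surjections onto $\prod_j\Z/m_j$ sending a small loop around $b_j$ to a generator of $\Z/m_j$ produce the desired cover. When $B$ is projective we appeal to the Fenchel--Bundgaard--Nielsen theorem that good $2$-orbifolds admit smooth finite covers, or construct the cover explicitly: for irrational $B$ via cyclic covers associated to torsion line bundles; for $B\cong\PP^1$ with $r=2$ and $m_1=m_2$, via $z\mapsto z^{m_1}$; for $B\cong\PP^1$ with $r\geq 3$, via standard orbifold uniformization.

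Étaleness of $X'\to X$ is an \'etale-local check. Near $b_j$ identify $\nu$ with $t\mapsto s^{m_j}$. Take a point of $F_j$ lying on a component $F_{j,i}$ of coefficient $a_i=m_jc_i$ (note $m_j\mid a_i$ by definition of $m_j$ as a $\gcd$), and choose local coordinates $(x,y)$ on $X$ with $F_{j,i}$ given by $\{x=0\}$; then $\pi$ is \'etale-locally $(x,y)\mapsto x^{a_i}$. The fibre product is cut out by $x^{m_jc_i}=s^{m_j}$, which factors in characteristic zero as $\prod_{\zeta^{m_j}=1}(x^{c_i}-\zeta s)$. Each of the $m_j$ branches is smooth, parametrised by $(x,y)\mapsto(x,y,x^{c_i}/\zeta)$, and projects isomorphically onto the $(x,y)$-chart, so the normalisation is \'etale of degree $m_j$ over $X$. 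The local form of $X'\to B'$ is $(x,y)\mapsto x^{c_i}/\zeta$, giving a fibre of multiplicity $\gcd_i(c_i)=1$ over $s=0$. A parallel toric-local analysis at intersections of components of $F_j$ completes (1).

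For (2): if every fibre of $\pi$ has support $\G_m$, then each $F_j$ is irreducible with $c_i=1$, and after normalisation every scheme-theoretic fibre of $X'\to B'$ is reduced and isomorphic to $\G_m$. Hence $X'\to B'$ is a smooth surjection between smooth surfaces whose fibres are all isomorphic to $\G_m$; compactifying as in Lemma~\ref{l:cont} to a $\PP^1$-fibration and removing the horizontal boundary exhibits $X'\to B'$ as a $\G_m$-bundle. The main obstacle is the orbifold cover construction in the projective case, where hypothesis $(\star)$ is essential; the local \'etaleness calculation is standard but needs attention at points where components of $F_j$ meet, a subtlety that disappears in the setting of (2).
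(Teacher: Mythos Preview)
Your approach is essentially the same as the paper's: both invoke the solution to Fenchel's conjecture (Bundgaard--Nielsen, Fox) to produce the cover $B'\to B$ with the prescribed ramification, then argue that the normalised pullback $X'\to X$ is \'etale.

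The main difference is in the \'etaleness step. The paper dispatches this in one line by invoking the smoothness of $X$ and the Zariski--Nagata purity of the branch locus: since $X'$ is normal and $X$ is smooth, the branch locus of $X'\to X$ is pure of codimension~$1$, hence it suffices to check \'etaleness at the generic point of each component of each $F_j$ --- exactly the easy local computation you carry out. Your worry about ``points where components of $F_j$ meet'' and the promised ``parallel toric-local analysis'' are therefore unnecessary: purity handles the codimension-$2$ locus for free. This is worth internalising, as it is the standard way such arguments are packaged.

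Two minor points. First, your appeal to Lemma~\ref{l:cont} in part~(2) is circular: that lemma already assumes a $\G_m$-bundle structure. The paper simply observes that once $X'\to B'$ has no multiple fibres and every fibre has support $\G_m$, the morphism is smooth with every scheme-theoretic fibre isomorphic to $\G_m$, which is what is meant by a $\G_m$-bundle here. Second, the paper takes the cover $B'\to B$ to be Galois, which is convenient but not essential for the statement as given.
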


\begin{proof}
	By the solution to Fenchel's conjecture due to Fox,
	Bundgaard--Nielsen (cf.~\cite{BN51}, \cite{Cha83}),
	the hypothesis $(\star)$ implies the existence of a Galois covering $B'\to B$,
	ramified precisely over $b_j$ with index $m_j$ for $j=1,\dots,r$.
	Then the natural map $X'\to X$ from the normalisation $X'$ of $X\times_B B'$
	is {\'e}tale by the smoothness of $X$ and the purity of branch loci.
	This proves (1), while (2) follows from (1) and the assumption that $F_j=m_j[F_j]_{\red}$.
\end{proof}

It is now classical that on a $\Q$-factorial normal projective surface $V$,
every pseudo-effective $\Q$-divisor $L$ has the \emph{Zariski decomposition} $L=P+N$
to the sum of two $\Q$-Cartier divisors such that:
\begin{enumerate}
	\item $P$ is nef,
	\item either $N=0$ or $N$ is effective with $\Supp N=\bigcup N_i$ the irreducible decomposition
	      and with negative definite intersection matrix $(N_i\cdot N_j)$, and
	\item $(P\cdot N_i)=0$ for every irreducible component $N_i$ of $\Supp N$.
\end{enumerate}
We call $P$ (resp.~$N$) the \emph{nef part} (resp.~\emph{negative part}) of the decomposition.

The following result is well known.
We give a proof for the convenience of the reader.

\begin{proposition}\label{p:nef_free}
	Let $V$ be a $\Q$-factorial normal projective surface
	and $D$ an effective $\Q$-divisor such that $(V,D)$ has only log canonical singularities
	and $L\coloneqq K_V+D$ is pseudo-effective with $L=P+N$ its Zariski decomposition.
	Then $P$ is semi-ample and the inequality $P\leq L$ induces isomorphisms
	$H^0(V,\OO(sP))\cong H^0(V,\OO(sL))$ for all $s$ sufficiently large and divisible.
\end{proposition}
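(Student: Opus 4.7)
The plan is to split the proposition into two essentially independent parts: the semi-ampleness of $P$ is the substantive input (log abundance for log canonical surface pairs), while the identification of global sections is a formal consequence of the defining properties of the Zariski decomposition together with the negativity lemma.

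I would first handle the identification of sections. Fix $s$ sufficiently large and divisible so that both $sP$ and $sN$ are integral Weil divisors. The inclusion $H^0(V,\OO(sP))\hookrightarrow H^0(V,\OO(sL))$ is induced by $sL-sP=sN\geq 0$. For the reverse, take any effective divisor $E\sim sL$ and decompose $E=E_1+E_2$, where $E_2$ is supported on $\Supp N$ and $E_1$ shares no irreducible component with $N$. Then $E_1\cdot N_i\geq 0$ for each component $N_i$ of $N$, since $E_1$ is effective and has no component supported on $N_i$; on the other hand, $P\cdot N_i=0$ gives $E\cdot N_i=sL\cdot N_i=sN\cdot N_i$. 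Subtracting yields
\[
(E_2-sN)\cdot N_i=-E_1\cdot N_i\leq 0\qquad\text{for every }i.
\]
Since $E_2-sN$ is supported on the $N_i$ whose intersection matrix is negative definite, the negativity lemma forces $E_2-sN\geq 0$, so $E-sN\geq 0$ is an effective representative of $sP$; this proves the reverse inclusion and hence the isomorphism.

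For the semi-ampleness of $P$, I would invoke log abundance in dimension two. Concretely, take a log resolution $\mu\colon V'\to V$ of $(V,D)$ and pull back: rewrite $\mu^*L$ as $K_{V'}+\widetilde{D}+($correction$)$ for an lc boundary $\widetilde{D}$, run a $(K_{V'}+\widetilde{D})$-MMP and analyze the Zariski decomposition on the minimal model. On the minimal model, in the klt case Kawamata's base point free theorem yields semi-ampleness of the nef part directly; in the lc case one either invokes the base point free theorem for lc pairs in dimension two, or performs a small perturbation of the coefficients of $\widetilde{D}$ along components meeting $N$, exploiting $P\cdot N_i=0$ to ensure that the shape of the Zariski decomposition, and in particular its nef part, is unaffected. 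Pushing semi-ampleness back down along $\mu$ gives semi-ampleness of $P$ on $V$.

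The main obstacle is the semi-ampleness step in the lc (as opposed to klt) regime: one must accommodate the boundary components with coefficient exactly $1$. This is handled either by the lc version of the base point free theorem or by a careful perturbation argument that stays within the klt category while preserving the nef part. Once $P$ is known to be semi-ample, base point freeness of $sP$ for all large and divisible $s$ is automatic, so combined with the section identification established above, the proposition follows.
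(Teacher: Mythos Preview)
Your proposal is correct, but the route differs from the paper's in an instructive way.

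For the section identification you argue intrinsically on $V$, using the defining properties of the Zariski decomposition (orthogonality $P\cdot N_i=0$ and negative definiteness of $(N_i\cdot N_j)$) together with the negativity lemma. This is the classical argument showing that $N$ is the ``fixed part'' of $|sL|$ for all divisible $s$; it is valid and has the virtue of applying to any pseudo-effective divisor admitting a Zariski decomposition, not just to $K_V+D$. For semi-ampleness you then pass to a log resolution, run an MMP upstairs, and invoke base point freeness there before pushing back down.

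The paper proceeds in the opposite order and avoids the resolution entirely. Since $(V,D)$ is already $\Q$-factorial log canonical, one may run a $(K_V+D)$-MMP directly on $V$ (Fujino), obtaining a birational contraction $\pi\colon V\to W$ with $K_W+D_W$ nef and hence semi-ample by abundance for lc surfaces. Writing $K_V+D=\pi^*(K_W+D_W)+E$ with $E\geq 0$ $\pi$-exceptional, one reads off immediately that $P=\pi^*(K_W+D_W)$ and $N=E$, so semi-ampleness of $P$ is automatic. The section isomorphism then falls out of the projection formula and the fact that $E$ is effective and $\pi$-exceptional, rather than from a separate negativity argument.

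In short: your argument separates the two conclusions and proves each by its own mechanism, while the paper derives both simultaneously from the single observation that the Zariski decomposition is exactly the pullback decomposition coming from the minimal model. Your detour through a resolution for the semi-ampleness step is unnecessary here, since the MMP and abundance are available for lc pairs in dimension two without first resolving.
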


\begin{proof}
	By the known Minimal Model Program for log canonical surfaces (cf.~\cite{Fuj12}),
	there is a composition
	\[
		V=V_0\xrightarrow{\pi_0}V_1\xrightarrow{\pi_1}\cdots\xrightarrow{\pi_{m-1}}V_m\eqqcolon W
	\]
	of birational contractions of $(K_{V_i}+D_i)$-negative extremal curves $E_i\subseteq V_i$
	such that $K_W+D_W$ is nef and hence semi-ample by the abundance theorem (cf.~\cite[(3.13)]{KM98});
	here $D_i\subseteq V_i$ is the direct image of $D$, and $D_W\coloneqq D_m$;
	further, $K_{V_i}+D_i=\pi_i^*(K_{V_{i+1}}+D_{i+1})+a_iE_i$ for some $a_i>0$.
	Set $\pi\coloneqq\pi_{m-1}\circ\cdots\circ\pi_0\colon V\to W$.
	Then
	\[
		L=K_V+D=\pi^*(K_W+D_W)+E
	\]
	where $E$ is effective and $\pi$-exceptional.
	Thus $P=\pi^*(K_W+D)$ and $N=E$.
	The second isomorphism below follows from the projection formula:
	\begin{align*}
		H^0(V,\OO(sL)) & =H^0(W,\pi_*\OO(sL)) \cong
		H^0(W,\OO(s(K_W+D_W))\otimes\pi_*\OO(sE))   \\           & =
		            H^0(W,\OO(s(K_W+D_W))=H^0(V,\OO(sP)))
	\end{align*}
	since $E\geq 0$ is $\pi$-exceptional.
\end{proof}

\begin{lemma}[cf.~{\cite[Theorem 11.6 and Theorem 11.12]{Iit82}}]\label{l:finord}
	Let $X$ be a variety and $f\colon X\to X$ a dominant morphism.
	Assume $\overline{\kappa}(X)=\dim X$.
	Then $f$ is an automorphism of finite order.
\end{lemma}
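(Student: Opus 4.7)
The plan is to derive the lemma from the logarithmic versions of Iitaka's theorems on varieties of general type (\cite[Theorems 11.6 and 11.12]{Iit82}): $(a)$ any dominant rational self-map of a variety of log general type is birational; $(b)$ the birational automorphism group $\Bir(X)$ of such a variety is finite. Once these are in hand, the lemma follows in a few lines, so the bulk of the work is packaged into those two inputs.

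In more detail, I would fix a log smooth compactification $(V,D)$ of $X$ and extend $f$ to a dominant rational map $\bar{f}\colon V\dashrightarrow V$, defined outside a codimension-two subset. Since $\okappa(X)=\kappa(V,K_V+D)=\dim X$, for sufficiently large and divisible $s$ the log $s$-canonical map $\phi_s\colon V\dashrightarrow Y_s\subseteq\PP^N$ attached to $|s(K_V+D)|$ is birational onto its closed image $Y_s$. Because $\bar{f}$ is defined at the generic point of every irreducible component of $D$, the pullback of a log $s$-form with at worst log poles along $D$ again has at worst log poles along $D$, so $f^*$ yields a well-defined $\kk$-linear endomorphism of the finite-dimensional space $H^0(V,s(K_V+D))$. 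This endomorphism is injective, because a nonzero log section cannot vanish on the dense set $f(X)$, hence it is a linear automorphism represented in a chosen basis by some $M\in\GL_{N+1}(\kk)$. The standard identity $\phi_s\circ f=M\circ\phi_s$ forces $M$ to preserve $Y_s$ and to restrict to an automorphism $\psi\colon Y_s\to Y_s$, and comparing degrees gives $\deg f=\deg(\phi_s\circ f)=\deg(\psi\circ\phi_s)=1$. This confirms $(a)$, so $f$ is birational.

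Applying $(b)$, $f$ has finite order in $\Bir(X)$: there exists $n\geq 1$ with $f^n=\id_X$ as rational maps. But $f^n\colon X\to X$ is a morphism agreeing with $\id_X$ on a nonempty open subset of $X$, so $f^n=\id_X$ as morphisms. Then $f^{n-1}$ is a regular two-sided inverse of $f$, making $f$ an automorphism of order dividing $n$.

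The main obstacle is really the appeal to Iitaka's finiteness theorem for $\Bir$ of varieties of log general type; both the complete and the quasi-projective versions are available in \cite[\S 11]{Iit82}, but the proofs there are nontrivial. The remaining steps, namely birationality via the log canonical embedding and the upgrade from birational to biregular, are formal once this input is granted.
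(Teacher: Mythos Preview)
The paper does not give its own proof of this lemma; it simply records the statement with a citation to Iitaka's Theorems~11.6 and 11.12. Your proposal is exactly the standard argument behind those theorems, and it is correct: the log-pluricanonical system is $f^*$-stable, the induced linear automorphism intertwines the (birational) log-canonical map with a projective automorphism, forcing $\deg f=1$; finiteness of $\Bir(X)$ in the log general type case then yields $f^n=\id$, which upgrades to a biregular statement since $f^n$ is a morphism.

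One small remark on the only soft spot: your justification that $\bar f^*$ preserves $H^0(V,s(K_V+D))$ (``$\bar f$ is defined at the generic point of each $D_i$, so log poles pull back to log poles'') is correct but terse. The cleaner bookkeeping---and the version the paper itself uses later in the proof of Proposition~\ref{p:kappa=1_per}, see the display labelled $(\dagger)$---is to pass to a resolution $\Gamma$ of the graph with projections $p_1,p_2\colon\Gamma\to V$, set $D_\Gamma=(p_1^{-1}D)_{\red}$, note $p_2^{-1}(D)\subseteq D_\Gamma$ because $f(X)\subseteq X$, and apply the log ramification formula $K_\Gamma+D_\Gamma=p_i^*(K_V+D)+E_i$ with $E_i\ge 0$. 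This makes the injection $f^*\colon H^0(V,s(K_V+D))\to H^0(V,s(K_V+D))$ transparent. Either way, your outline matches the intended (cited) proof.
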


\begin{lemma}[cf.~{\cite[Theorem 2]{Iit77}}]\label{lem:k>0_et}
	Let $X$ be a smooth variety with $\overline{\kappa}(X)\geq 0$.
	Then any dominant morphism from $X$ into itself is an {\'e}tale surjective morphism.
\end{lemma}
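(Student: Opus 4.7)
My approach is to reduce the question to a statement about the logarithmic ramification divisor of an extension of $f$ to a log smooth compactification of $X$, and then exploit that $\overline{\kappa}(X)\geq 0$ (so the log plurigenera are nonzero and of controlled growth) to force this ramification to be supported on the boundary. Concretely, fix a log smooth compactification $(V,D)$ of $X$, so $\overline{\kappa}(X)=\kappa(V,K_V+D)\geq 0$. The rational extension $\bar f\colon V\dashrightarrow V$ of $f$ can be resolved: choose a birational morphism $\sigma\colon W\to V$ with $W$ smooth so that $g\coloneqq\bar f\circ\sigma\colon W\to V$ is a morphism, and enlarge $D_W$ to an SNC divisor whose support contains $\sigma^{-1}(D)\cup g^{-1}(D)\cup\Exc(\sigma)$. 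Then $(W,D_W)$ is again a log smooth compactification of $X$, both $g$ and $\sigma$ send $W\setminus D_W$ into $X$, and functoriality of $\Omega^1(\log\cdot)$ produces the log ramification formulas
\[
	K_W+D_W=g^*(K_V+D)+R_g,\qquad K_W+D_W=\sigma^*(K_V+D)+R_\sigma,
\]
with $R_g,R_\sigma\geq 0$ and $R_\sigma$ supported in $\Exc(\sigma)$. In particular $\sigma^*\colon H^0(V,s(K_V+D))\xrightarrow{\sim}H^0(W,s(K_W+D_W))$ is an isomorphism for every $s\geq 1$, and composition with $g^*$ yields a linear injection $f^\star\coloneqq(\sigma^*)^{-1}\circ g^*$ of the finite-dimensional space $H^0(V,s(K_V+D))$ into itself.

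For \'etaleness, pick $s$ large and divisible with $H^0(V,s(K_V+D))\neq 0$ (possible since $\overline{\kappa}(X)\geq 0$) and decompose $R_g=R_g^{\mathrm{hor}}+R_g^{\mathrm{vert}}$, where $R_g^{\mathrm{vert}}$ is the part supported in $\Supp D_W$. Since $\sigma|_{W\setminus D_W}\colon W\setminus D_W\to X$ is an open immersion (as $\Exc(\sigma)\cup\sigma^{-1}(D)\subseteq D_W$), $R_g^{\mathrm{hor}}$ corresponds via $\sigma$ to the classical ramification divisor of $f$ on the dense open $\sigma(W\setminus D_W)\subseteq X$. To conclude that $R_g^{\mathrm{hor}}=0$, iterate: after further resolving indeterminacies at each stage, the log ramification of $g^n$ contains $\sum_{k=0}^{n-1}(f^{n-1-k})^*R_g^{\mathrm{hor}}$ horizontally, yet the iterates $(f^\star)^n$ still send sections of $s(K_V+D)$ back into the same finite-dimensional space. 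Since the log plurigenera grow only polynomially in $s$ of degree $\overline{\kappa}(X)$, this finite-dimensionality rules out positive horizontal contributions and forces $R_g^{\mathrm{hor}}=0$. Hence $g$ is \'etale along $W\setminus D_W$, so $f$ is \'etale on a dense open of $X$; purity of the branch locus (in characteristic $0$, for smooth $X$) then upgrades this to \'etaleness of $f$ on all of $X$.

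Finally, for surjectivity: since $f$ is \'etale, $f(X)\subseteq X$ is open, and dense by dominance. A strict inclusion $f(X)\subsetneq X$ would give an \'etale surjection $f\colon X\twoheadrightarrow f(X)$ of degree $d$ preserving $\overline{\kappa}$; iterating and repeating the argument with $f^n$ eventually contradicts the bounded growth of log plurigenera if $X\setminus f(X)$ contains divisorial components, while codimension-$\geq 2$ complements of an $f$-invariant open cannot occur for an \'etale self-map of a smooth variety with nonzero log plurigenera. The main obstacle is the second step --- the quantitative analysis that forces $R_g^{\mathrm{hor}}=0$ --- which is the substance of Iitaka's original argument \cite{Iit77}; the log smooth compactification and purity machinery framing it is standard.
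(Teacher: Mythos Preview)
The paper does not prove this lemma at all; it simply cites \cite[Theorem 2]{Iit77}. So there is no ``paper's own proof'' to compare against, and your write-up is already more detailed than what the paper offers.

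Your framework for \'etaleness --- log smooth compactification, the log ramification formula $K_W+D_W=g^*(K_V+D)+R_g$, and the injectivity-hence-bijectivity of $f^\star$ on the finite-dimensional space $H^0(V,s(K_V+D))$ --- is exactly Iitaka's setup. The iteration idea is also right. However, your invocation of ``the log plurigenera grow only polynomially in $s$'' is a red herring: the growth rate in $s$ is irrelevant. The correct argument works at a \emph{single} $s$ with $\bar P_s(X)>0$. Since $(f^\star)^n$ is bijective, every nonzero section $\omega\in H^0(s(K_V+D))$ lies in the image of $(f^\star)^n$ for all $n$, hence $\sigma^*\omega$ vanishes along $s$ times the horizontal log ramification of $g_n$, which dominates $s\sum_{k=0}^{n-1}(f^k)^*R_g^{\mathrm{hor}}$ restricted to $X$. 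If $R_g^{\mathrm{hor}}\neq 0$, this forces $\omega$ to vanish along a divisor on $X$ whose support or multiplicity is unbounded in $n$, impossible for a fixed nonzero $\omega$. That is the whole argument; no asymptotics in $s$ are needed.

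Your surjectivity paragraph, by contrast, has genuine gaps. You assert that a divisorial complement $X\setminus f(X)$ ``eventually contradicts the bounded growth of log plurigenera'' and that a codimension-$\geq 2$ complement ``cannot occur for an \'etale self-map of a smooth variety with nonzero log plurigenera'', but neither claim is justified. \'Etale invariance gives $\bar P_s(f(X))=\bar P_s(X)$, so adding the closure of $X\setminus f(X)$ to the boundary creates no new log-pluricanonical sections --- this is a constraint, not yet a contradiction, and iterating it does not obviously break anything. One cleaner route: show directly that $g^{-1}(D)=D_W$ (not merely $\subseteq$), equivalently that no component $E\subseteq D_W$ has $g(E)\not\subseteq D$; any such $E$ contributes coefficient $e_E\geq 1$ to $R_g$, and one then argues via the same plurigenera isomorphism that this too is impossible. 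As written, your surjectivity step is an assertion, not a proof.
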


\begin{lemma}\label{l:euler}
	Let $X$ be a smooth variety and $f\colon X\to X$ a dominant morphism.
	Suppose that $f$ is {\'e}tale and surjective
	(this is the case when $\overline{\kappa}(X)\geq 0$; cf.~\cref{lem:k>0_et}).
	Suppose further that $\deg(f)\geq 2$.
	Then the topological Euler number $e(X)$ of $X$ is $0$.
\end{lemma}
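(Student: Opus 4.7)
The plan is to exploit the classical multiplicativity of the (compactly supported) topological Euler characteristic under finite étale covers. Concretely, I would argue that for any finite étale surjective morphism $g\colon Y\to X$ of degree $d$ between complex algebraic varieties, one has $e(Y)=d\cdot e(X)$. Applied to $g=f\colon X\to X$ with $d=\deg(f)\geq 2$, this yields
\[
	e(X)=d\cdot e(X),\qquad\text{i.e.}\qquad (d-1)\,e(X)=0,
\]
and since $d-1\geq 1$, we conclude $e(X)=0$.

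First I would reduce to the case $\kk=\C$. Since $X$ and $f$ are defined over a finitely generated subfield $K\subseteq\kk$, we may choose an embedding $K\hookrightarrow\C$ and base change to obtain $f_{\C}\colon X_{\C}\to X_{\C}$, which remains smooth, étale, surjective, and of the same degree $d$. The topological Euler number $e(X)$ is by definition (or convention in the paper) computed on $X_{\C}$, so it suffices to verify the statement for complex varieties.

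Next I would justify the multiplicativity formula. The standard way is to work with compactly supported singular cohomology and the associated Euler characteristic $e_c(\cdot)$, which agrees with $e(\cdot)$ for any variety since $X_{\C}$ has the homotopy type of a finite CW complex; $e_c$ is additive on locally closed stratifications. For a finite étale cover $g\colon Y\to X$ of degree $d$, one chooses a stratification of $X$ into (quasi-projective) smooth strata $X=\bigsqcup S_i$ over which $g$ trivialises topologically, so that $g^{-1}(S_i)$ is a disjoint union of $d$ copies of $S_i$; then $e_c(Y)=\sum_i e_c(g^{-1}(S_i))=\sum_i d\cdot e_c(S_i)=d\cdot e_c(X)$. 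Applying this with $g=f$ gives $(d-1)e(X)=0$, hence $e(X)=0$.

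There is essentially no obstacle here, other than being careful that the Euler characteristic used is the compactly supported one (equivalently, ordinary singular Euler characteristic, since they coincide for complex algebraic varieties). The remark in the hypothesis that étale surjectivity is automatic when $\okappa(X)\geq 0$ (by Lemma~\ref{lem:k>0_et}) is only used to indicate when the lemma applies, and plays no role in the proof itself.
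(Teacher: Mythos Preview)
Your proposal is correct and takes essentially the same approach as the paper: the paper's proof is the single line ``We have $e(X)=\deg(f)\cdot e(X)$ and $\deg(f)\geq 2$, so $e(X)=0$,'' and your argument simply fills in the justification of the multiplicativity identity that the paper takes for granted.
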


\begin{proof}
	We have $e(X)=\deg(f)\cdot e(X)$ and $\deg(f)\geq 2$, so $e(X)=0$.
\end{proof}

\begin{lemma}\label{lem:embd_in_P1xP1}
	Any $\A^1$-bundle over a smooth rational affine curve $B$ is trivial.
\end{lemma}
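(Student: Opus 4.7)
The plan is to adapt the $\G_m$-bundle argument in the proof of Lemma~\ref{l:smooth_K*}(3). Since $B$ is smooth, rational, and affine, I will write $B = \PP^1 \setminus \{p_1, \ldots, p_r\}$ with $r \geq 1$ and set $\overline B = \PP^1$. Taking a log smooth compactification of $X$ and contracting redundant $(-1)$-curves inside fibres as in Lemma~\ref{l:cont}, I can extend $\pi$ to a $\PP^1$-bundle $\overline\pi\colon \overline X \to \overline B$, so that $\overline X \cong \F_n$ for some $n \geq 0$.

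Next, I would decompose the boundary $D = \overline X \setminus X = H + V$ into horizontal and vertical parts. Each fibre $\overline\pi^{-1}(b) \cong \PP^1$ for $b \in B$ meets $X$ in $\A^1 \cong \PP^1 \setminus \{\text{pt}\}$, hence meets $D$ in exactly one point; therefore the horizontal part $H$ is a single cross-section $C_\infty$ of $\overline\pi$, and $V$ is supported precisely on the $r$ full fibres $F_i = \overline\pi^{-1}(p_i)$.

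The core step is to perform elementary transformations---blow up a point in one of the $F_i$ and contract the proper transform of that fibre---in order to bring $(\overline X, C_\infty)$ into the form $(\PP^1 \times \PP^1,\, \PP^1 \times \{\infty\})$, leaving $X$ itself untouched because all such operations stay inside $D$. An elementary transformation at a point of $C_\infty$ decreases $C_\infty^2$ by one, while one at a point off $C_\infty$ increases it by one; similarly the Hirzebruch index $n$ changes by $\pm 1$ according to whether the chosen point lies on the current negative section. Since $r \geq 1$ provides at least one boundary fibre that may be used repeatedly, a suitable sequence of such moves simultaneously brings $n$ and $C_\infty^2$ down to $0$, yielding $\overline X \cong \PP^1 \times \PP^1$ with $C_\infty$ a ruling, which an automorphism of the second $\PP^1$ identifies with $\PP^1 \times \{\infty\}$.

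Once this configuration is reached, $\overline X \setminus X = (\PP^1 \times \{\infty\}) \cup \bigcup_{i=1}^{r} \{p_i\} \times \PP^1$, so $X = B \times \A^1$ and $\pi$ is the first projection, establishing triviality. I expect the elementary-transformation step to be the main obstacle: one must verify that the $(n, C_\infty^2)$-bookkeeping can always be arranged to land at $(0,0)$ using only fibres over $\overline B \setminus B$, but the analogous procedure carried out for $\G_m$-bundles at the end of the proof of Lemma~\ref{l:smooth_K*}(3) serves as a direct template.
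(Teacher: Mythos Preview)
Your geometric approach is correct and provides a valid alternative to the paper's proof, though it follows a genuinely different route. The paper argues cohomologically: $\A^1$-bundles over $B$ are classified by $H^1_{\et}(B,G)$ with $G=\Aut(\A^1)\cong\G_a\rtimes\G_m$; since $B$ is affine one has $H^1_{\et}(B,\G_a)=0$, and since $B\subseteq\A^1$ one has $H^1_{\et}(B,\G_m)=\Pic(B)=0$, whence $H^1_{\et}(B,G)=0$ via the short exact sequence $0\to\G_a\to G\to\G_m\to 1$. Your argument instead compactifies to a Hirzebruch surface and uses elementary transformations on boundary fibres to normalise $(\overline X,C_\infty)$ to $(\PP^1\times\PP^1,\PP^1\times\{\infty\})$, directly paralleling the $\G_m$-bundle case in Lemma~\ref{l:smooth_K*}(3), as you observe. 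The cohomological proof is shorter and isolates the structural reason (vanishing of $\Pic$ and of coherent $H^1$ on a rational affine curve); your approach is more hands-on and keeps the toolkit uniform with Lemma~\ref{l:cont} and Lemma~\ref{l:smooth_K*}(3). One small sharpening: rather than tracking $n$ and $C_\infty^2$ separately, note that a section of $\F_n$ has self-intersection either $-n$ or $\geq n$, so once you have driven $C_\infty^2$ to $0$ by elementary transformations (always possible using a single boundary fibre repeatedly), the surface is automatically $\F_0$ and you are done.
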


\begin{proof}
	The $\A^1$-bundles over $B$ are classified by $H^1_{\acute{e}t}(B,G)$
	where $G=\Aut(\A^1)\cong\G_a\rtimes\G_m$.
	Since $B\subseteq\A^1$ and $\Pic(\A^1)=0$, we have $H^1_{\acute{e}t}(B,\G_m)=\Pic(B)=0$.
	Consider the short exact sequence $0\to\G_a\to G\to\G_m\to 1$
	and the fact that $H^1_{\acute{e}t}(B,\G_a)=H^1_{\acute{e}t}(B,\G_m)=0$.
	We have $H^1_{\acute{e}t}(B,G)=0$, which concludes the proof.
\end{proof}

\begin{lemma}\label{l_inv_fun_field}
	Let $K$ be an algebraically closed subfield of $\kk$.
	Let $X$ be a variety over $K$ and $f\colon X\dashrightarrow X$ a dominant rational map.
	Then the following statements are equivalent:
	\begin{enumerate}
		\item \(K(X)^{f}\neq K\);
		\item \(K(X)^{f^{\ell}}\neq K\) for some (and hence for all) \(\ell\geq 1\); and
		\item \(\kk(X_{\kk})^{f_{\kk}}\neq \kk\).
	\end{enumerate}
\end{lemma}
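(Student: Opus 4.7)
My plan is to establish the equivalences $(1)\Leftrightarrow(2)$ and $(1)\Leftrightarrow(3)$ separately. The direction $(1)\Rightarrow(2)$ is immediate from the inclusion $K(X)^{f}\subseteq K(X)^{f^\ell}$. For $(2)\Rightarrow(1)$, I would first observe that $K(X)^{f^\ell}$ is stable under $f^{*}$: if $(f^{*})^{\ell}g=g$, then $(f^{*})^{\ell}(f^{*}g)=f^{*}g$. Restricted to $K(X)^{f^\ell}$, the automorphism $f^{*}$ has order dividing $\ell$, so by Artin's theorem $[K(X)^{f^\ell}:K(X)^{f}]\leq \ell<\infty$. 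Since $K$ is algebraically closed, $K(X)^{f}=K$ would force $K(X)^{f^\ell}$ to be algebraic over $K$ and hence equal to $K$, contradicting $(2)$.

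For $(1)\Rightarrow(3)$, the essential fact is that $K(X)\cap\kk=K$ inside $\kk(X_{\kk})$. Since $\kk$ has characteristic zero and $K$ is algebraically closed, $X$ is geometrically integral over $K$, so $K(X)\otimes_{K}\kk$ is a domain and embeds into $\kk(X_{\kk})$. For any $g\in K(X)\cap\kk$, the element $g\otimes 1-1\otimes g$ maps to $0$ in $\kk(X_{\kk})$ and hence vanishes already in $K(X)\otimes_{K}\kk$; expanding this vanishing in $K$-bases of $\kk$ and $K(X)$ containing $1$ forces $g\in K$. In particular, any $g\in K(X)^{f}\setminus K$ stays outside $\kk$ when viewed in $\kk(X_{\kk})$ and is automatically $f_{\kk}$-invariant, yielding $(3)$.

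The substantive direction is $(3)\Rightarrow(1)$, and I expect this to be the main obstacle. I would pick $h\in\kk(X_{\kk})^{f_{\kk}}\setminus\kk$; since $h$ is determined by finitely many coefficients from $\kk$, there is a finitely generated intermediate field $K\subseteq K_{0}\subseteq\kk$ with $h\in K_{0}(X_{K_{0}})$, and $h$ is automatically $f_{K_{0}}$-invariant. Fix an irreducible affine $K$-variety $T$ whose function field is $K_{0}$; then $K_{0}(X_{K_{0}})$ is the function field of $T\times_{K}X$, and $f_{K_{0}}$ is identified with $\id_{T}\times f$. The hypothesis $h\notin\kk\supseteq K_{0}$ translates into the statement that $h$, seen as a rational function on $T\times X$, is not the pullback of any rational function on $T$ along the projection $\pi\colon T\times X\to T$.

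I would conclude by specialization. Because $K$ is algebraically closed and infinite, $T(K)$ is Zariski dense in $T$, and the failure of $h$ to factor through $\pi$ forces, for $t$ in a dense open subset of $T(K)$, the restriction $h_{t}\coloneqq h|_{\{t\}\times X}\in K(X)$ to be defined and not contained in $K$. Since $\id_{T}\times f$ restricts to $f$ on the fibre $\{t\}\times X$, each such $h_{t}$ lies in $K(X)^{f}\setminus K$, proving $(1)$. The only delicacies are arranging the model $T$ with $K(T)=K_{0}$ (immediate by choosing any irreducible affine model of $K_{0}$ over $K$) and verifying that a generic $t$ yields a defined non-constant restriction, the latter resting on the standard observation that a rational function on $T\times X$ whose restriction to almost every vertical fibre is constant is itself pulled back from $T$.
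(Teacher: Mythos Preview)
Your argument is correct and differs from the paper's in two places. For $(1)\Leftrightarrow(2)$ the paper simply cites \cite[Lemma~2.1]{Xie19}, whereas you give a self-contained proof via Artin's lemma; the key observation that $f^*$ restricts to an \emph{automorphism} of $K(X)^{f^\ell}$ of order dividing $\ell$ (with $(f^*)^{\ell-1}$ as inverse) is exactly what makes this work even though $f^*$ is not surjective on $K(X)$. For $(3)\Rightarrow(1)$ the paper takes a linear-algebra route: it writes $\phi=\sum\phi_i b_i$ in a $K$-basis $\{b_i\}$ of $\kk$ with $\phi_i\in K(X)$ and reads off $f^*\phi_i=\phi_i$ by $K(X)$-linearity of $f_{\kk}^*$, so some $\phi_i$ is a nonconstant invariant. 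This is very short, though the asserted identification $\kk(X_{\kk})\cong\kk\otimes_K K(X)$ is really only an inclusion (for instance $1/(t-\pi)\in\C(t)$ does not lie in $\C\otimes_{\overline{\Q}}\overline{\Q}(t)$), so strictly speaking one must first clear denominators before expanding in the basis. Your specialization argument---spreading $h$ out over an affine $K$-variety $T$ with $K(T)=K_0$, identifying $f_{K_0}$ with $\id_T\times f$, and restricting to a generic $K$-point of $T$---sidesteps this bookkeeping entirely; the only substantive input is the standard fact that a rational function constant along generic fibres of $T\times X\to T$ is pulled back from $T$. The paper's approach (once patched) is shorter; yours is more geometric and more robust.
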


\begin{proof}
	By \cite[Lemma 2.1]{Xie19}, (1) and (2) are equivalent.
	The direction $(1)\Rightarrow (3)$ is easy.
	We only need to prove $(3) \Rightarrow (1)$.
	Pick $\phi\in\kk(X_{\kk})^{f_{\kk}}\setminus\kk$.
	View $\kk$ as a $K$-vector space with basis $\{b_i\}_{i\in I}$.
	Note that $\kk(X_{\kk})\cong\kk\otimes_{K}K(X)$.
	So we may write $\phi=\sum_{i\in I}\phi_ib_i$ where $\phi_i\in K(X)$.
	Then $\phi=(f_{\kk})^*\phi=\sum_{i\in I}(f^*\phi_i)b_i$,
	which implies $f^*\phi_i=\phi_i$ for all $i\in I$.
	Since $\phi$ is not a constant, some $\phi_i$ is nonconstant in $K(X)$ and hence $K(X)^f\neq K$.
\end{proof}

We need the following result from \cite[Expos{\'e} \RomanNumeralCaps{13}, Proposition 4.6]{GR71}.

\begin{lemma}\label{lem:pi_1-inv}
	Let $\kk$ be an algebraically closed field of characteristic zero,
	and $X$ a normal quasi-projective variety over $\kk$.
	Let $L$ be any algebraically closed field with an injective homomorphism $\kk\to L$.
	Then, the natural map $\pi_1^{\et}(X_L)\to\pi_1^{\et}(X)$ is an isomorphism.
\end{lemma}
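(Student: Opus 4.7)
The plan is to prove surjectivity and injectivity of the natural map $\pi_1^{\et}(X_L)\to\pi_1^{\et}(X)$ separately. The first reduction I would make is to assume that $L$ is finitely generated over $\kk$: any connected finite \'etale cover $Y\to X_L$ is cut out by finitely many polynomial equations with coefficients in some finitely generated subfield $L_0\subseteq L$, so descends to $X_{L_0}$; since every $\pi_1^{\et}$-torsor arises this way, a standard direct-limit argument reduces the isomorphism assertion for $L/\kk$ to that for every finitely generated $L_0/\kk$.

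In the finitely generated case I would write $L_0=\kk(U)$ for a geometrically integral smooth $\kk$-variety $U$. A finite \'etale cover $Y_0\to X_{L_0}$ then spreads out to a finite \'etale morphism $\SY\to X\times_\kk U'$ over a dense open $U'\subseteq U$. Since $\kk$ is algebraically closed, I pick a $\kk$-point $u\in U'(\kk)$; the fibre $\SY_u\to X$ provides a finite \'etale cover of $X$ whose pullback to $X_L$ recovers $Y$ (up to the ambiguity of the choice of $u$, which disappears on passing to $\pi_1^{\et}$ because $U'$ is connected). This yields surjectivity at the level of finite quotients of $\pi_1^{\et}(X_L)$. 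For injectivity, if two finite \'etale covers of $X$ become isomorphic after base change to $L$, then by faithfully flat descent along $\kk\hookrightarrow L$ (combined with the rigidity of \'etale covers) they are already isomorphic over $\kk$; equivalently, the base change functor on finite \'etale sites is fully faithful.

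The main obstacle is the non-properness of $X$. For proper schemes, invariance of $\pi_1^{\et}$ under algebraically closed base change is standard in any characteristic via proper base change; for open $X$ one must rule out new \'etale covers appearing after enlarging the field. Here characteristic zero is essential, and the cleanest way I would handle it is to invoke Hironaka to fix a log smooth compactification $(V,D)$ of $X$, whose existence is preserved under base change, and then reduce to the tame fundamental group of $(V,D)$, which behaves well with respect to algebraically closed base field extensions in characteristic zero (tameness is automatic in characteristic zero, so $\pi_1^{\et}(X)=\pi_1^{\mathrm{tame}}(V,D)$). The technical heart of the argument therefore lies in controlling ramification along $D$ under base change, which is precisely what \cite[Exp.~\RomanNumeralCaps{13}, Prop.~4.6]{GR71} provides.
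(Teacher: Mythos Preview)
The paper does not give a proof of this lemma at all: it simply records the statement as a direct citation of \cite[Expos\'e~\RomanNumeralCaps{13}, Proposition~4.6]{GR71}. Your proposal is a correct outline of the standard spreading-out/specialisation argument and, in the end, you too invoke the very same reference for the technical core; so your approach is not different from the paper's---you have just unpacked what the cited proposition says, while the paper is content to quote it.
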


\begin{lemma}\label{lem:extsemiabel}
	Let $K$ be an algebraically closed subfield of $\kk$.
	Let $X$ be a variety over $K$ such that $X_{\kk}$ is
	isomorphic to a semiabelian variety (resp.~an algebraic torus).
	Then $X$ is isomorphic to a semiabelian variety (resp.~an algebraic torus).
\end{lemma}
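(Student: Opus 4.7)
The plan is to realise $X$ as its own Serre generalised Albanese variety and thereby acquire a semiabelian structure over $K$. Recall that for a smooth geometrically integral variety $X$ over a field $K$ of characteristic zero, once a base point $e\in X(K)$ is fixed, Serre constructs a universal morphism $\alpha_X\colon X\to\Alb(X)$ to a semiabelian variety over $K$ sending $e$ to $0$; moreover this construction commutes with algebraically closed extensions of the ground field.

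First I would transfer the needed properties of $X_\kk$ (smoothness, geometric integrality, quasi-projectivity) back to $X$ using that $K\hookrightarrow\kk$ is faithfully flat. Since $K$ is algebraically closed, I may pick a base point $e\in X(K)$. Forming $\alpha_X\colon X\to\Alb(X)$ and base-changing to $\kk$ yields $(\alpha_X)_\kk$, which by base-change compatibility is the Albanese morphism of $X_\kk$ at $e$. As $X_\kk$ is already a semiabelian variety with identity $e$, the identity $\mathrm{id}_{X_\kk}$ itself fulfils the universal property, and uniqueness of the Albanese forces $(\alpha_X)_\kk$ to be an isomorphism.

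Faithfully flat descent then implies that $\alpha_X$ is already an isomorphism over $K$, so $X\cong\Alb(X)$ is a semiabelian variety over $K$. For the torus sub-statement, observe that the maximal abelian-variety quotient of $\Alb(X)$ vanishes after base change to $\kk$ (since $X_\kk$ is a torus) and hence vanishes already over $K$ by the same faithfully flat descent; thus $\Alb(X)$ is an algebraic torus over the algebraically closed field $K$, and every such torus is split, giving $X\cong\G_m^n$.

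The main obstacle will be to cite carefully the existence and base-change compatibility of Serre's generalised Albanese for $X$ (smoothness and geometric integrality of $X$ must be confirmed beforehand). An alternative route that avoids invoking Serre's Albanese is to descend the multiplication map $m_\kk\colon X_\kk\times X_\kk\to X_\kk$ directly: spread $m_\kk$ out over an open subset of a $K$-variety parametrising the finitely generated subextension of $\kk/K$ over which $m_\kk$ is defined, specialise at a $K$-point to produce a candidate group law on $X$, and then use rigidity of semiabelian group laws (any two with the same identity on a given connected variety must coincide) to verify that the base change of this candidate law recovers $m_\kk$ and so makes $X$ a semiabelian variety over $K$.
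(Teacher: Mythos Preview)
Your main argument via the quasi-Albanese is correct, but it is not the route the paper takes.  The paper argues exactly along the lines of your \emph{alternative} route: it chooses a finitely generated subextension $L/K$ over which the isomorphism $X_\kk\cong G$ (and the group law on $G$) is defined, realises $L$ as the function field of an affine $K$-variety $B$, spreads out to a semiabelian scheme $G_B\to B$ together with a $B$-isomorphism $X\times_{\Spec K}B\cong G_B$, and then specialises at a $K$-point $b\in B(K)$ to obtain $X\cong G_b$.  No rigidity statement is invoked; the paper is content to produce \emph{some} semiabelian structure on $X$, not to match it with the original one on $X_\kk$.

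What each approach buys: the paper's spreading-out argument is entirely self-contained and uses nothing beyond standard limit techniques, so it avoids having to locate a precise reference for the existence and base-change compatibility of Serre's generalised Albanese over non-algebraically-closed fields (which you rightly flag as the main technical debt of your primary approach).  Your Albanese argument, on the other hand, is conceptually cleaner and would give more: it identifies $X$ canonically with its own Albanese, and (modulo the cited compatibility) works uniformly without any choice of spreading-out data.  Since the paper already uses the quasi-Albanese elsewhere (Lemma~\ref{l:etl_pre_torus}, Claim~\ref{claim:fib}), your approach would fit naturally, but the authors evidently preferred the more elementary specialisation argument here.
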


\begin{proof}
	We only prove the semiabelian variety case.
	The algebraic torus case is similar.

	By assumption, there is an isomorphism $\phi_{\kk}\colon X\times_{\Spec K}\Spec\kk\to G$,
	where $G$ is a semiabelian variety over $\kk$ (cf.~\ref{NT}).
	Let $L$ be a subfield of $\kk$ such that $G$ (and its group structure)
	and $\phi_{\kk}$ are defined over $L$ and $L$ is finitely generated over $K$,
	i.e., there is a semiabelian variety $G_L$ over $L$
	and an isomorphism $\phi_L\colon X\times_{\Spec K}\Spec L\to G_L$.
	Then there exists an affine variety $B$ over $K$ such that $K(B)=L$,
	a semiabelian scheme $G_B\to B$ over $B$ whose generic fibre is $G_L$.
	After shrinking $B$,
	there is an isomorphism of $B$-schemes $\phi_B\colon X\times_{\Spec K} B\to G_B$
	whose restriction to the generic fibre is $\phi_L$.
	Picking $b\in B(K)$, the restriction of $\phi_B$ to the fibre over $b$
	gives an isomorphism between $X$ and $G_b$.
	Here $G_b$ is the fibre of $G_B\to B$ over $b$, which is a semiabelian variety over $K$.
\end{proof}

\begin{remark}\label{r:red_2C}
	Let $P(L)$ be a property of algebraic varieties
	and morphisms over an algebraically closed field $L$ of characteristic zero.
	Assume that:
	\begin{align}\label{eq:property}
		\tag{$\diamondsuit$}
		       & \text{for algebraically closed fields $\kk\subseteq\kk'$,}   \\
		\notag & \text{$P(\kk)$ holds true if and only if so does $P(\kk')$.}
	\end{align}
	Then for a fixed algebraically closed field $\kk$, $P(\kk)$ holds true if and only if so does $P(\C)$.

	Indeed, we may assume that the algebraic varieties
	and morphisms are defined over a subfield $K\subseteq \kk$ which is finitely generated over $\Q$.
	We then fix an embedding $K\subseteq\C$.
	Let $\kk_1\subseteq\C$ (resp.~$\kk_2\subseteq\kk$)
	be the algebraic closure of $K$ in $\C$ (resp.~$\kk$).
	Identifying the algebraic varieties and morphisms over $\kk_1$ and $\kk_2$
	with the isomorphism induced from an isomorphism $\kk_1\to \kk_2$.
	Then the claim follows from \eqref{eq:property}.
\end{remark}

\begin{lemma}\label{l:etl_pre_torus}
	Let $T$ be a semiabelian variety (resp.~an algebraic torus)
	and $\tau\colon\widetilde{T}\to T$ a finite {\'e}tale cover.
	Then $\widetilde{T}$ is also a semiabelian variety (resp.~an algebraic torus).
\end{lemma}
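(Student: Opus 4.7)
The plan is to equip $\widetilde{T}$ with a compatible commutative algebraic group structure making $\tau$ an isogeny, and then to apply Chevalley's structure theorem to identify the result. Recall that under the paper's conventions $\widetilde{T}$ is integral (hence connected), and it is smooth since $\tau$ is étale over the smooth $T$.

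Fix a point $\widetilde{e}\in\tau^{-1}(e_T)$. The key step is to lift the multiplication $m\colon T\times T\to T$ along $\tau$ to a morphism $\widetilde{m}\colon\widetilde{T}\times\widetilde{T}\to\widetilde{T}$ sending $(\widetilde{e},\widetilde{e})$ to $\widetilde{e}$. By the standard lifting criterion for finite étale covers, this reduces to checking
\[
\bigl(m\circ(\tau\times\tau)\bigr)_*\pi_1^{\et}(\widetilde{T}\times\widetilde{T})\ \subseteq\ H\coloneqq\tau_*\pi_1^{\et}(\widetilde{T}).
\]
Since $T$ is a connected commutative algebraic group, $\pi_1^{\et}(T,e_T)$ is abelian by an Eckmann--Hilton argument (the group law on $T$ induces a second product on $\pi_1$, forcing commutativity and agreeing with the usual one), and under this identification $m_*$ is addition on $\pi_1^{\et}(T)$. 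The image in the displayed inclusion is therefore $H+H=H$, so the criterion holds. The same argument lifts inversion $\iota\colon T\to T$ to $\widetilde{\iota}\colon\widetilde{T}\to\widetilde{T}$ fixing $\widetilde{e}$. Uniqueness of pointed lifts along $\tau$ then forces associativity, commutativity, the identity axiom, and the inverse axiom for $\widetilde{m}$: both sides of each identity are lifts of a common morphism agreeing at the distinguished basepoint, so they coincide.

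Having realized $\widetilde{T}$ as a connected commutative algebraic group with $\tau$ an isogeny, I apply Chevalley's theorem (in characteristic zero) to obtain an extension
\[
1\to L\to\widetilde{T}\to A'\to 0
\]
with $L$ an affine commutative connected group and $A'$ an abelian variety, where $L=T'\times U$ splits as a torus times a unipotent group. The image $\tau(U)\subseteq T$ is unipotent, but $T$ is semiabelian and contains no non-trivial unipotent subgroup; hence $U\subseteq\ker\tau$ is finite, and as $\G_a^n$ has no non-trivial finite subgroups in characteristic zero, $U=0$. Thus $\widetilde{T}$ is semiabelian. If in addition $T$ is a torus, then $\widetilde{T}$ is affine (finite over the affine $T$), so its abelian quotient $A'$ must vanish (any morphism from an affine variety to the proper $A'$ is constant), and $\widetilde{T}=T'$ is a torus as claimed.

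The main obstacle is the first step: promoting the heuristic that ``$\widetilde{T}$ ought to be a group'' to a genuine morphism of varieties. The crucial input is the abelianness of $\pi_1^{\et}$ of a commutative connected algebraic group, which guarantees that the subgroup $H\subseteq\pi_1^{\et}(T)$ is preserved by the operations $m$ and $\iota$ induce on $\pi_1$. Once the group structure on $\widetilde{T}$ is in place, Chevalley's theorem together with basic structure theory of commutative affine groups in characteristic zero immediately yields the conclusion.
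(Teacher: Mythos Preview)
Your argument is correct and takes a genuinely different route from the paper. The paper reduces to $\kk=\C$, invokes the quasi-Albanese map $a\colon\widetilde{T}\to A$, observes that $\tau$ factors through $a$ by universality, and then uses Kawamata's theorem on irreducible general fibres to conclude that $a$ is an isomorphism. Your approach instead lifts the group law directly through the \'etale cover via the $\pi_1^{\et}$ lifting criterion and the Eckmann--Hilton observation that $m_*$ is addition on the (abelian) fundamental group, and then appeals to Chevalley's structure theorem. Your route is more self-contained in that it avoids the quasi-Albanese machinery and Kawamata's result, working purely with covering-space and algebraic-group structure theory; the paper's route is shorter once that machinery is in hand and fits the ambient log-geometry toolkit of the article.

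One small correction: in the torus case your parenthetical ``any morphism from an affine variety to the proper $A'$ is constant'' is false as stated (e.g.\ $\A^1\hookrightarrow\PP^1$). What you want is that $A'$, being a \emph{quotient} of the affine algebraic group $\widetilde{T}$, is itself affine; since $A'$ is also proper and connected, $A'=0$. This is the same observation the paper uses (``semiabelian and affine implies torus''), and with this fix your argument is complete.
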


\begin{proof}
	We may assume $\kk=\C$ (cf.~\cref{lem:pi_1-inv,lem:extsemiabel} and \cref{r:red_2C}).
	Consider first the semiabelian variety case.
	Note that $\overline{\kappa}(\widetilde{T})=\overline{\kappa}(T)=0$.
	By the universal property of the quasi-Albanese map $a\colon\widetilde{T}\to A$
	(cf.~\cite{Iit76}), $\tau$ factors through $a$.
	In particular, $a$ is a finite surjective morphism.
	Moreover, $a$ has irreducible general fibres (cf.~\cite[Theorem~28]{Kaw81}).
	Thus $a$ is an isomorphism and hence $X$ is a semiabelian variety.

	When $T$ is further an algebraic torus, it is affine, so is $\widetilde{T}$.
	Then $\widetilde{T}$ being semiabelian and affine implies that it is an algebraic torus.
\end{proof}

\begin{lemma}[cf.~{\cite[Lemma~2.12]{NZ10}}]\label{lem:torus-cls}
	Let $X$ be a \textit{Q}-algebraic torus.
	Then there is a finite {\'e}tale cover $\pi_T\colon T\to X$ such that the following hold.
	\begin{enumerate}[leftmargin=2em]
		\item $T$ is an algebraic torus, and $\pi_T$ is Galois.
		\item If there is another finite {\'e}tale cover $\pi_{T'}\colon T'\to X$
		      from an algebraic torus $T'$,
		      then there is an {\'e}tale morphism $\tau\colon T'\to T$
		      such that $\pi_{T'}=\pi_T\circ\tau$.
	\end{enumerate}
	We call $\pi_T$ the algebraic torus closure of $X$.
\end{lemma}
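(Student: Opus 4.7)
The plan is to construct $T$ as the quotient of (the Galois closure of) an auxiliary torus cover of $X$ by its ``translation'' subgroup, and to verify that this quotient enjoys the required universal property.

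First, by hypothesis I pick a finite étale cover $\pi_0\colon T_0\to X$ from an algebraic torus. Passing to the Galois closure, which remains a torus by Lemma~\ref{l:etl_pre_torus}, I may assume $\pi_1\colon T_1\to X$ is Galois with $G\coloneqq\Gal(T_1/X)$. Using the decomposition $\Aut(T_1)=T_1\rtimes\Aut_{\text{grp}}(T_1)$ (every biregular automorphism of a torus is a translation followed by a group automorphism), $G$ embeds into $\Aut(T_1)$, and I set $H\coloneqq G\cap T_1$, the kernel of the composition $G\hookrightarrow\Aut(T_1)\twoheadrightarrow\Aut_{\text{grp}}(T_1)$. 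Then $H\trianglelefteq G$ is a finite subgroup of $T_1$, so $T\coloneqq T_1/H$ is again an algebraic torus, and $\pi_T\colon T\to X$ is finite étale Galois with group $G/H$, proving (1).

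For (2), let $\pi_{T'}\colon T'\to X$ be another finite étale cover from a torus $T'$. Pick a connected component $W$ of the fibre product $T_1\times_X T'$; since $W\to T_1$ is finite étale, $W$ is a torus by Lemma~\ref{l:etl_pre_torus}. Let $T_2\to X$ be the Galois closure of $W\to X$; then $T_2\to W$ is finite étale, so $T_2$ is a torus, and $T_2\to X$ is a Galois torus cover dominating both $T_1$ and $T'$. Setting $G_2\coloneqq\Gal(T_2/X)$ and $H_2\coloneqq G_2\cap T_2$, the task reduces to proving (A) $\Gal(T_2/T)=H_2$ (equivalently, $T\cong T_2/H_2$ over $X$), and (B) $\Gal(T_2/T')\subseteq H_2$; granted these, the inclusion $\Gal(T_2/T')\subseteq\Gal(T_2/T)$ yields the required $X$-morphism $T'=T_2/\Gal(T_2/T')\to T_2/H_2=T$.

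Claim (B) is routine: the sub-cover $T_2\to T'$ is a finite étale morphism between tori, so after translating so that an identity of $T_2$ maps to an identity of $T'$, it becomes a surjective group homomorphism (any pointed morphism of tori is one), hence an isogeny, whose deck transformations are translations by elements of its finite kernel. The crux is Claim (A). Using that an element of $\Aut(T_i)$ is a translation iff it acts trivially on $\pi_1^{\et}(T_i)\cong\hat{\Z}^{n}$ (where $n=\dim X$), and that under $G_2\twoheadrightarrow G$ the subgroup $\Gal(T_2/T)$ is the preimage of $H$, Claim~(A) reduces to showing that $g\in G_2$ acts trivially on $\pi_1^{\et}(T_1)$ iff it acts trivially on $\pi_1^{\et}(T_2)$. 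The only nontrivial direction follows because $\pi_1^{\et}(T_2)\subseteq\pi_1^{\et}(T_1)\cong\hat{\Z}^{n}$ has finite index and thus contains $m\hat{\Z}^{n}$ for some positive integer $m$; since $\hat{\Z}$ is torsion-free, any matrix in $\GL_n(\hat{\Z})$ that is the identity on $m\hat{\Z}^{n}$ must be the identity on $\hat{\Z}^{n}$. Uniqueness of $T$ up to $X$-isomorphism then follows formally by applying (2) to two candidates and comparing degrees.
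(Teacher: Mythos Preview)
Your proof is correct and follows essentially the same route as the paper: quotient a Galois torus cover by its translation subgroup, then verify universality by passing to a common Galois torus cover $T_2$ (the paper's $\widetilde{T}$) and comparing translation subgroups. The only difference is in Claim~(A): the paper simply asserts $H_T=G_{\widetilde{T}}\cap\{\text{translations on }\widetilde{T}\}$ ``by our construction of $T$'' (implicitly using that the \'etale map $\widetilde{T}\to T_1$ between tori is, up to translation, an isogeny, so translations descend to translations and any deck transformation lying over a translation is itself a translation), whereas you supply a more elaborate---but still correct---$\pi_1^{\et}$ argument for the same fact.
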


\begin{proof}
	Since $X$ is a \textit{Q}-algebraic torus,
	there is a finite {\'e}tale morphism $\pi_T\colon T\to X$ where $T$ is an algebraic torus.
	After taking its Galois closure, we may assume that $\pi_T$ is Galois (cf.~\cref{l:etl_pre_torus}).
	Then $X=T/G_T$ where $G_T$ is a finite subgroup of $\Aut_{var}(T)$
	(the automorphism group of the variety $T$).
	Let $G_0=G_T\cap\{\text{translations on }T\}$.
	Then $T/G_0\to X$ is {\'e}tale and Galois, and $T/G_0$ is an algebraic torus.
	So we may assume $G_T$ is translation-free.
	We next show that $\pi_T$ satisfies the universal property (2).

	Suppose that there is another finite {\'e}tale cover $\pi_{T'}\colon T'\to X$
	from an algebraic torus $T'$.
	By taking the base change and the Galois closure,
	there exist {\'e}tale morphisms $\widetilde{T}\to T$ and $\widetilde{T}\to T'$ over $X$
	such that the composition $\widetilde{T}\to X$ is Galois.
	Clearly, $\widetilde{T}$ is an algebraic torus (cf.~\cref{l:etl_pre_torus}).
	Then $X\cong\widetilde{T}/G_{\widetilde{T}}$ and $T\cong\widetilde{T}/H_{T}$
	where $G_{\widetilde{T}}$ is a finite subgroup of $\Aut_{var}(\widetilde{T})$
	and $H_T=\Gal(\widetilde{T}/T)$ is a subgroup of $G_{\widetilde{T}}$.
	Similarly, $T'\cong\widetilde{T}/H_{T'}$
	where $H_{T'}=\Gal(\widetilde{T}/T')$ is a subgroup of $G_{\widetilde{T}}$.
	Since $T$ and $T'$ are both algebraic tori,
	$H_T$ and $H_{T'}$ are translation subgroups of $\widetilde{T}$.
	By our construction of $T$,
	the group $H_T=G_{\widetilde{T}}\cap\{\text{translations on }\widetilde{T}\}$.
	So $H_{T'}$ is a subgroup of $H_T$.
	Hence there is a natural {\'e}tale morphism $\tau\colon T'\to T=T'/(H_{T}/H_{T'})$
	such that $\pi_{T'}=\pi_T\circ\tau$.
\end{proof}

\section{Dynamical degrees and arithmetic degrees}\label{dyn-arith}

In this section, we will upper-bound arithmetic degree by dynamical degree,
and show that the arithmetic degree (like dynamical degree)
is preserved by generically finite morphisms;
see \cref{proupboundarth,p:kscequ}.

\begin{setup}{\textbf{The dynamical degrees.}}\label{dyn_deg}
	In this part, we work over an algebraically closed field of arbitrary characteristic.
	Let $X$ be a variety and $f\colon X\dashrightarrow X$ a dominant rational self-map.
	Let $X'$ be a normal projective variety which is birational to $X$.
	Let $L$ be an ample (or just nef and big) divisor on $X'$.
	Denote by $f'$ the rational self-map of $X'$ induced by $f$.
	For $i=0,1,\dots,\dim X$, and $n\geq 0$,
	define $(f'^n)^*(L^i)$ to be the $(\dim X-i)$-cycle on $X'$ as follows:
	let $\Gamma$ be a normal variety with a birational morphism $\pi_1\colon\Gamma\to X'$
	and a morphism $\pi_2\colon\Gamma\to X'$
	such that $f'^n=\pi_2\circ\pi_1^{-1}$.
	Then $(f'^n)^*(L^i)\coloneqq (\pi_1)_*\pi_2^*(L^i)$.
	The definition of $(f'^n)^*(L^i)$ does not depend on the choice of $\Gamma$, $\pi_1$ and $\pi_2$.
	Then
	\[
		d_i(f)\coloneqq\lim_{n\to\infty}((f'^n)^*(L^i)\cdot L^{\dim X-i})^{1/n}
	\]
	is called the $i$-th \emph{dynamical degree} of $X$.
	The limit converges and does not depend on the choice of $X'$ and $L$;
	moreover, if $\pi\colon X\dashrightarrow Y$ is a generically finite and dominant rational map
	between varieties and $g\colon Y\dashrightarrow Y$ is a rational self-map
	such that $g\circ\pi=\pi\circ f$, then $d_i(f)=d_i(g)$ for all $i$;
	for details, we refer to \cite[Theorem~1]{Dan20} (and the projection formula),
	or Theorem~4 in its arXiv version.
\end{setup}

\Cref{p:dyn_sub_var} below is easy when $\kk$ is of characteristic $0$ and $Z\not\subseteq\Sing X$.

\begin{proposition}\label{p:dyn_sub_var}
	Let $X$ be a variety over an algebraically closed field $\kk$ of arbitrary characteristic,
	and $f\colon X\dashrightarrow X$ a dominant rational self-map.
	Denote by $I(f)$ the indeterminacy locus of $f$.
	Let $Z$ be an irreducible subvariety in $X$ which is not contained in $I(f)$
	such that $f|_Z$ induces a dominant rational self-map of $Z$.
	Then $d_i(f|_Z)\leq d_i(f)$ for $i=0,1,\dots,\dim Z$.
\end{proposition}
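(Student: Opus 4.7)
The plan is to pass to a common birational model, express both dynamical degrees as intersection numbers there, and bound one by the other via an effective-cycle comparison tied to the ampleness of a fixed polarisation on $X$.

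By the birational invariance of dynamical degrees (see \ref{dyn_deg}), I may replace $X$ with any normal projective model birational to it, simultaneously replacing $Z$ by its strict transform, which stays $f$-invariant. Assume henceforth $X$ is normal projective, fix an ample Cartier divisor $L$ on $X$, put $M = L|_Z$, and set $c = \dim X - \dim Z$. For each $n \geq 1$, pick a normal projective variety $W_n$ with a birational morphism $\pi_1 \colon W_n \to X$ and a morphism $\pi_2 \colon W_n \to X$ satisfying $\pi_2 = f^n \circ \pi_1$ generically (e.g., $W_n$ the normalisation of the graph closure of $f^n$ in $X \times X$). Let $W_n^Z \subseteq W_n$ be the unique irreducible component of $\pi_1^{-1}(Z)$ mapping birationally onto $Z$. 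Since $f^n(Z) \subseteq Z$, one has $\pi_2(W_n^Z) \subseteq Z$, and $\pi_2|_{W_n^Z} \circ (\pi_1|_{W_n^Z})^{-1}$ realises $(f|_Z)^n$. The projection formula then gives
\[
\bigl((f|_Z)^n\bigr)^{*} M^i \cdot M^{\dim Z - i} = \pi_2^{*} L^i \cdot \pi_1^{*} L^{\dim Z - i} \cdot [W_n^Z],
\]
and likewise $(f^n)^{*} L^i \cdot L^{\dim X - i} = \pi_2^{*} L^i \cdot \pi_1^{*} L^{\dim X - i}$, both computed on $W_n$.

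The crux is then a uniform estimate: I claim there exists $\lambda > 0$, independent of $n$, such that $\lambda \pi_1^{*} L^c - [W_n^Z]$ is pseudo-effective on $W_n$. To see this, first produce an effective cycle class $\lambda L^c - [Z]$ on $X$ itself: choose $m \gg 0$ so that $\mathcal{I}_Z \otimes L^{\otimes m}$ is globally generated, take $c$ sufficiently general divisors $D_1, \dots, D_c \in |\mathcal{I}_Z(m)|$ containing $Z$, and use a Bertini-style argument to write $D_1 \cdots D_c = a[Z] + [E]$ with $a \geq 1$ and $E$ effective of codimension $c$, whence $m^c L^c - [Z]$ is effective. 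Pulling back, the Cartier divisors $\pi_1^{*} D_i$ contain $\pi_1^{-1}(Z) \supseteq W_n^Z$, so $\pi_1^{*} D_1 \cdots \pi_1^{*} D_c - [W_n^Z]$ remains effective on $W_n$, giving $m^c \pi_1^{*} L^c \geq [W_n^Z]$ numerically with $\lambda = m^c$ independent of $n$. Intersecting with the nef class $\pi_2^{*} L^i \cdot \pi_1^{*} L^{\dim Z - i}$ yields
\[
\pi_2^{*} L^i \cdot \pi_1^{*} L^{\dim Z - i} \cdot [W_n^Z] \leq m^c \cdot \pi_2^{*} L^i \cdot \pi_1^{*} L^{\dim X - i}.
\]
Combining with the formulas above gives $\bigl((f|_Z)^n\bigr)^{*} M^i \cdot M^{\dim Z - i} \leq m^c \, (f^n)^{*} L^i \cdot L^{\dim X - i}$, and taking $n$-th roots as $n \to \infty$ recovers $d_i(f|_Z) \leq d_i(f)$.

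The main obstacle, I expect, is the production of the effective cycle $\lambda L^c - [Z]$ together with the Bertini step on a possibly singular normal $X$, and the control of multiplicities in $\pi_1^{-1}(Z)$ along $W_n^Z$, especially in the case $Z \subseteq \Sing X$ singled out as non-easy. In characteristic zero one may simply pass to a resolution of the pair $(X, Z)$ to smooth everything, but for the arbitrary-characteristic statement one must either argue directly via Fulton's refined intersection theory (pullbacks of Cartier divisors to the normal $W_n$ are well-behaved, and Bertini holds for base-point-free linear systems on normal varieties) or appeal to de Jong-type alterations. A cleaner alternative choice of $W_n^Z$ — for instance, as the strict transform of the graph of $(f|_Z)^n$ inside the graph of $f^n$, rather than as a component of $\pi_1^{-1}(Z)$ — should sidestep the multiplicity bookkeeping.
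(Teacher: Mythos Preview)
Your approach is essentially the same as the paper's: reduce to a normal projective model, compute both dynamical degrees on a common graph $W_n$, and compare $[W_n^Z]$ with $\pi_1^*L^c$ via an effective-cycle inequality, then intersect with nef classes. The paper likewise chooses $H_1,\dots,H_c\in|L|$ (after scaling $L$) so that $\bigcap H_j$ is pure of codimension $c$ with $Z$ as a component, and then proves on $W_n$ that $Z_n\le(\pi_1^n)^*H_1\cdots(\pi_1^n)^*H_c$.

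Two points worth flagging. First, the step you call the ``main obstacle''---that an irreducible codimension-$c$ component $W$ of $\bigcap_{j=1}^c L_j$ (for effective nef Cartier $L_j$) satisfies $[W]\le L_1\cdots L_c$---is precisely what the paper isolates and proves as a short self-contained lemma (Lemma~\ref{lemsuppin}), by induction on $c$: find some $L_j$ not containing a codimension-$(c{-}1)$ component $W'$ of the remaining intersection with $W\subseteq W'\cap L_j$, and use nefness. No Bertini on singular $X$, alterations, or refined Fulton machinery is needed; this elementary induction handles arbitrary characteristic and the case $Z\subseteq\Sing X$ uniformly. Second, your reduction ``replace $X$ by a normal projective model and $Z$ by its strict transform'' is a little too quick: under normalisation the preimage of $Z$ may be reducible with $f'$ permuting the components, so one must pass to an iterate $f^m$ and pick a single $f'^m$-invariant component $Z_1$ dominating $Z$ (as the paper does), and similarly one should normalise $Z$ and $Z_n$ when writing the projection-formula identity for $d_i(f|_Z)$, since the definition of dynamical degree uses a normal model.
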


\begin{proof}
	Set $d_X\coloneqq\dim X$ and $d_Z\coloneqq\dim Z$.
	Denote by $\eta_Z$ the generic point of~$Z$.

	\medskip

	We first reduce to the projective case.
	Let $U$ be an affine open subset of $X$ containing $\eta_Z$
	and $X'$ a projective compactification of $U$.
	Denote by $f'$ the rational self-map of $X'$ induced by $f$.
	Note that $f$ is well-defined at $\eta_Z$ and $f(\eta_Z)=\eta_Z$.
	Similarly $f'$ is well-defined at $\eta_Z$ and $f'(\eta_Z)=\eta_Z$.
	Let $Z'$ be the Zariski closure of $\eta_Z$ in $X'$.
	Then $Z'\not\subseteq I(f')$ and $f'|_{Z'}$ induces a dominant rational self-map of $Z'$.
	We have $d_i(f)=d_i(f')$ and $d_j(f|_Z)=d_j(f'|_{Z'})$ for all $i, j$ (cf.~e.g.,~\cite{Dan20}).
	After replacing $X,f,Z$ by $X',f',Z'$, we may assume that $X$ is projective.

	\medskip

	Next we reduce to the normal case.
	Let $\pi\colon X'\to X$ be the normalisation of $X$.
	Set $Z'\coloneqq\pi^{-1}(Z)$, the set-theoretic preimage of $Z$.
	It is reduced, but may not be irreducible.
	Let $f'$ be the rational self-map of $X'$ induced by $f$.
	Since $\pi$ is finite and $I(f')\subseteq\pi^{-1}(I(f))$,
	$f'$ is well-defined at every generic point of $Z'$.
	Since $f(\eta_Z)=\eta_Z$ and $\pi^{-1}(\eta_Z)$ is finite,
	there is some $m\geq 1$ and $\eta_{Z_1}\in \pi^{-1}(\eta_Z)$
	such that $(f')^m(\eta_{Z_1})=\eta_{Z_1}$.
	Let $Z_1$ be the Zariski closure of $\eta_{Z_1}$.
	Then $\pi(Z_1)=Z$ and $\pi|_{Z_1}\colon Z_1\to Z$ is finite.
	Observe that $(f')^m|_{Z_1}$ induces a dominant rational self-map of $Z_1$.
	We have $d_i((f')^m|_{Z_1})=d_i(f|_Z)^m$ for all $i$.
	After replacing $X,f,Z$ by $X',f'^m,Z_1$, we may assume that $X$ is normal.

	For $n\geq 0$, consider the following commutative diagram.
	\[
		\xymatrixcolsep{3.5pc}
		\xymatrix{
		\widetilde{Z_n}\ar[d]_{\widetilde{\pi_1^n|_{Z_n}}}\ar[r]^{p_n}
		& Z_n\ar[d]_{\pi_1^n|_{Z_n}}\ar@{^{(}->}[r]	& \Gamma_n\ar[d]_{\pi_1^n}\ar[dr]^{\pi_2^n}\\
		\widetilde{Z}\ar[r]_p
		& Z \ar@{^{(}->}[r]& X \ar@{-->}[r]_{f^n}	& X
		}
	\]
	Here $\Gamma_n$ is a normal projective variety;
	the map $\pi_1^n$ is a birational morphism satisfying $I((\pi_1^n)^{-1})\subseteq I(f^n)$;
	the map $\pi_2^n$ is a morphism satisfying $f^n=\pi_2^n\circ(\pi_1^n)^{-1}$;
	the variety $Z_n$ is the strict transform of $Z$ under $(\pi_1^{n})^{-1}$, i.e.,
	\[
		Z_n=\overline{(\pi_1^n)^{-1}(Z\setminus I((\pi_1^n)^{-1}))}
		=\overline{(\pi_1^n)^{-1}(Z\setminus I(f^n))};
	\]
	the maps $p\colon\widetilde{Z}\to Z$ and $p_n\colon\widetilde{Z_n}\to Z_n$ are the normalisations.
	Since $Z$ is $f^n$-invariant, we get the following commutative diagram:
	\[
		\xymatrixcolsep{4pc}
		\xymatrix{
		\widetilde{Z_n}\ar[d]_{\widetilde{\pi_1^n|_{Z_n}}}\ar[dr]^{\widetilde{\pi_2^n|_Z}}\ar[rr]^{p_n}
		& {} & Z_n\ar[d]_{\pi_1^n|_{Z_n}}\ar[dr]^{\pi_2^n|_Z} \\
		\widetilde{Z}\ar@/_27pt/[rr]^p \ar@{-->}[r]_{(\widetilde{f|_Z})^n}
		& \widetilde{Z}\ar@/_27pt/[rr]^p & Z\ar@{-->}[r]_{(f|_Z)^n}& Z
		.}
	\]
	Here $\widetilde{\pi_1^n|_{Z_n}}$ (as in the above diagram too) is induced by $\pi_1^n|_{Z_n}$, the map $\widetilde{\pi_2^n|_{Z_n}}$ is induced by $\pi_2^n|_{Z_n}$ and $\widetilde{f|_Z}$ is induced by $f|_Z$.
	Let $L$ be an ample divisor on $X$.

	\medskip

	For $i=0,1,\dots,d_Z$, by the projective formula, we have:
	\begin{align*}
		d_i(f)    & =\lim_{n\to\infty}((f^n)^*(L^i)\cdot L^{d_X-i})^{1/n}=\lim_{n\to\infty}((\pi_2^n)^*(L^i)\cdot(\pi_1^n)^*(L^{d_X-i}))^{1/n}; \\
		d_i(f|_Z) & =d_i(\widetilde{f|_{Z}})=\lim_{n\to\infty}(((\widetilde{f|_Z})^n)^*(p^*(L^i))\cdot p^*(L^{d_Z-i}))^{1/n}                    \\
		          & =\lim_{n\to \infty}(\widetilde{\pi_2^n|_{Z_n}}^*(p^*(L^i))\cdot\widetilde{\pi_1^n|_{Z_n}}^*(p^*(L^{d_Z-i})))^{1/n}          \\
		          & =\lim_{n\to \infty}(p_n^*((\pi_2^n)^*(L^i))\cdot p_n^*((\pi_1^n)^*(L^{d_Z-i})))^{1/n}                                       \\
		          & =\lim_{n\to \infty}((\pi_2^n)^*(L^i)\cdot (\pi_1^n)^*(L^{d_Z-i})\cdot Z_n)^{1/n}.
	\end{align*}
	After replacing $L$ by its positive multiple,
	we may assume that $L$ is very ample and there are $H_1,\dots,H_{d_X-d_Z}\in |L|$
	such that $\bigcap_{j=1}^{d_X-d_Z}H_j$ is of pure dimension equal to $\dim Z$
	and it contains $Z$ as an irreducible component.
	Then we have:
	\begin{align}
		 & d_i(f)     =\lim_{n\to\infty}((\pi_2^n)^*(L^i)\cdot(\pi_1^n)^*(L^{d_Z-i})\cdot(\pi_1^n)^*H_1\cdot\dots\cdot(\pi_1^n)^*H_{d_X-d_Z})^{1/n},\hspace{-1ex}\label{equdif} \\
		 & d_i(f|_Z)  =\lim_{n\to\infty}((\pi_2^n)^*(L^i)\cdot(\pi_1^n)^*(L^{d_Z-i})\cdot Z_n)^{1/n}.\label{equdifZ}
	\end{align}

	We need the following:

	\begin{lemma}\label{lemsuppin}
		Let $Y$ be a normal projective variety,
		$L_1,\dots, L_r$ effective and nef Cartier divisors on $Y$.
		Let $W$ be an irreducible component of $\bigcap_{j=1}^rL_j$,
		which is of codimension $r$ in $Y$
		(but $\bigcap_{j=1}^rL_j$ is not assumed to be of pure dimension).
		Then we have $W\leq L_1 \cdot\cdots\cdot L_r\in \N^r(Y)$,
		i.e., $L_1\cdot\cdots\cdot L_r-W$ is pseudo-effective in $\N^r(Y)$
		(the real vector space of codimension-$r$ cycle classes modulo numerical equivalence).
	\end{lemma}

	We return back to the proof of \cref{p:dyn_sub_var}.
	Since $\bigcap_{j=1}^{d_X-d_Z}H_j$ is of pure codimension $d_X-d_Z$,
	it contains $Z$ as an irreducible component,
	$Z_n$ is the strict transform of $Z$ under $(\pi^n_1)^{-1}$,
	and $Z_n\subseteq\bigcap_{j=1}^{d_X-d_Z}(\pi^n_1)^*H_j$.
	By \cref{lemsuppin}, we get
	$Z_n\leq (\pi_1^n)^*H_1\cdot\cdots\cdot (\pi_1^n)^*H_{d_X-d_Z}$.
	Since both $(\pi_1^n)^*L$ and $(\pi_2^n)^*L$ are nef, we get
	\begin{multline*}
		(\pi_2^n)^*(L^i)\cdot (\pi_1^n)^*(L^{d_Z-i})\cdot Z_n\\
		\leq (\pi_2^n)^*(L^i)\cdot (\pi_1^n)^*(L^{d_Z-i})\cdot (\pi_1^n)^*H_1\cdot\cdots\cdot (\pi_1^n)^*H_{d_X-d_Z}.
	\end{multline*}
	Applying this to Equations \eqref{equdif} and \eqref{equdifZ},
	we get $d_i(f|_Z)\leq d_i(f)$.
	This proves \cref{p:dyn_sub_var} modulo \cref{lemsuppin}.

	\medskip

	We still have to prove \cref{lemsuppin}, by induction on $r$.
	The case $r=1$ is clear.

	Now we assume that $r\geq 2$ and \cref{lemsuppin} holds for $r-1$.
	After relabelling, there is an irreducible component $W_{r-1}$ of $\bigcap_{j=1}^{r-1}L_j$
	of codimension $r-1$ such that $W_{r-1}\not\subseteq L_r$ and $W\subseteq W_{r-1}\cap L_r$.
	Then $W_{r-1}\cap L_r$ is of pure codimension $r$ and $W$ is an irreducible component of it.
	Thus $W\leq W_{r-1}\cdot L_r$.
	By the induction hypothesis, we get $W_{r-1}\leq L_1\cdot\cdots\cdot L_{r-1}$.
	Since $L_{r}$ is nef, we have
	\[
		W\leq W_{r-1}\cdot L_r\leq L_1\cdot\cdots\cdot L_{r-1}\cdot L_r.
	\]
	This proves \cref{lemsuppin}, and also completes the proof of \cref{p:dyn_sub_var}.
\end{proof}

\begin{setup}\textbf{Admissible triples.}
	We define an \emph{admissible triple} to be $(X,f,x)$
	where $X$ is a quasi-projective variety over $\overline{\Q}$,
	$f\colon X\dashrightarrow X$ is a dominant rational self-map
	and $x\in X(\overline{\Q})$ such that $f$ is well-defined at $f^n(x)$, for any $n\geq 0$.

	We say that $(X,f,x)$ \emph{dominates} (resp.~\emph{generically finitely dominates}) $(Y,g,y)$
	if there is a dominant rational map (resp.~generically finite and dominant rational map)
	$\pi\colon X\dashrightarrow Y$ such that $\pi\circ f=g\circ\pi$,
	$\pi$ is well defined along $O_f(x)$ and $\pi(x)=y$.

	We say that $(X,f,x)$ is \emph{birational} to $(Y,g,y)$
	if there is a birational map $\pi\colon X\dashrightarrow Y$
	such that $\pi\circ f=g\circ\pi$
	and if there is a Zariski dense open subset $V$ of $Y$ containing $O_g(y)$
	such that $\pi|_U\colon U\coloneqq\pi^{-1}(V)\to V$ is a well-defined isomorphism and $\pi(x)=y$.
	In particular, if $(X,f,x)$ is birational to $(Y,g,y)$,
	then $(X,f,x)$ generically finitely dominates $(Y,g,y)$.
\end{setup}

\begin{remark}
	\leavevmode
	\begin{enumerate}
		\item If $(X,f,x)$ dominates $(Y,g,y)$ and if $O_f(x)$ is Zariski dense in $X$,
		      then $O_g(y)$ is Zariski dense in $Y$.
		      Moreover, if $(X,f,x)$ generically finitely dominates $(Y,g,y)$,
		      then $O_f(x)$ is Zariski dense in $X$ if and only if $O_g(y)$ is Zariski dense in $Y$.
		\item Every admissible triple $(X,f,x)$ is birational to
		      an admissible triple $(X',f',x')$ where $X'$ is projective.
		      Indeed, we may pick $X'$ to be any projective compactification of $X$,
		      $f'$ the self-map of $X'$ induced from $f$, and $x'=x$.
	\end{enumerate}
\end{remark}

\begin{lemma}\label{lembirinverse}
	Let $\pi\colon X\dashrightarrow Y$ be a birational map between projective varieties.
	Let $U$ be a open subset of $X$.
	If $\pi$ is well defined on $U$, $V\coloneqq \pi(U)$ is open in $Y$
	and $\pi|_U\colon U\to V$ is an isomorphism,
	then $\pi^{-1}$ is well defined on $V$ and $\pi^{-1}(V)=U$.
\end{lemma}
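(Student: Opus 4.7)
The plan is to work with the graph $\Gamma\subseteq X\times Y$ of $\pi$ and its projections $p_1\colon \Gamma\to X$ and $p_2\colon \Gamma\to Y$. Since $\pi$ is birational, $\Gamma$ is irreducible, and $\pi^{-1}$ is well defined at a point $y\in Y$ if and only if $p_2$ is an isomorphism over an open neighbourhood of $y$. Letting $\sigma\colon V\to U$ denote the morphism inverse to the isomorphism $\pi|_U\colon U\to V$, and setting $\widetilde{U}\coloneqq p_1^{-1}(U)$ and $\widetilde{V}\coloneqq p_2^{-1}(V)$, the objective becomes showing $\widetilde{U}=\widetilde{V}$ and that $p_2$ restricts to an isomorphism $\widetilde{V}\to V$; these together imply the lemma with $\pi^{-1}(v)=\sigma(v)$ for $v\in V$.

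First I would verify $\widetilde{U}\subseteq \widetilde{V}$ and compute the restriction of $p_2$ there. Because $\pi$ is a morphism on $U$, the restriction $p_1\colon \widetilde{U}\to U$ is an isomorphism whose composition with $\pi|_U$ equals $p_2|_{\widetilde{U}}$; hence $p_2|_{\widetilde{U}}\colon \widetilde{U}\to V$ is an isomorphism, $p_2(\widetilde{U})=V$, giving $\widetilde{U}\subseteq \widetilde{V}$, and on $\widetilde{U}$ the identity $p_1=\sigma\circ p_2$ holds (since $\sigma=(\pi|_U)^{-1}$).

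To propagate the identity to the whole of $\widetilde{V}$, observe that $\widetilde{V}$ is a nonempty open subset of the irreducible variety $\Gamma$, hence irreducible, and contains the nonempty open subset $\widetilde{U}$, which is therefore dense in $\widetilde{V}$. The two morphisms $p_1|_{\widetilde{V}}$ and $\sigma\circ p_2|_{\widetilde{V}}$ from $\widetilde{V}$ to the separated variety $X$ agree on the dense subset $\widetilde{U}$, so they coincide on all of $\widetilde{V}$. Thus $p_1(\widetilde{V})\subseteq U$, i.e., $\widetilde{V}\subseteq p_1^{-1}(U)=\widetilde{U}$, yielding $\widetilde{U}=\widetilde{V}$. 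Consequently $p_2\colon \widetilde{V}\to V$ is an isomorphism, $\pi^{-1}$ is well defined on $V$ with $\pi^{-1}|_V=\sigma$, and $\pi^{-1}(V)=\sigma(V)=U$. The only potential subtlety lies in applying the graph-theoretic characterisation of well-definedness together with separatedness to upgrade agreement on a dense open to agreement everywhere; both are standard and the rest of the argument is routine.
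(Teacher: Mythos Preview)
Your argument is correct and takes a genuinely different route from the paper's. The paper first resolves the graph to reduce to the case where $\pi$ is a birational \emph{morphism}, then passes to normalisations of $X$ and $Y$, and finally invokes Zariski's Main Theorem: for $y\in V$ with $\pi(x)=y$, the point $x$ is isolated in $\pi^{-1}(y)$, but connectedness of fibres forces $\pi^{-1}(y)=\{x\}$, whence $\pi^{-1}(V)=U$. Your proof instead stays on the graph $\Gamma$ and exploits a density/separatedness argument: the identity $p_1=\sigma\circ p_2$ holds on the dense open $\widetilde{U}\subseteq\widetilde{V}$, and since $\widetilde{V}$ is reduced and $X$ is separated, it propagates to all of $\widetilde{V}$, forcing $\widetilde{V}=\widetilde{U}$. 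This is more elementary---it avoids normalisation and Zariski's Main Theorem altogether---and works uniformly without any reduction steps. The paper's approach, on the other hand, makes the geometric reason (no extra fibre components can hide over $V$) more visible, at the cost of heavier input.
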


\begin{proof}
	There are birational morphisms $\pi_1\colon Z\to X$ and $\pi_2\colon Z\to Y$
	such that $\pi=\pi_2\circ\pi_1^{-1}$ and $\pi_1$ is an isomorphism on $\pi_1^{-1}(U)$.
	If \cref{lembirinverse} holds for $\pi_2$, then it holds for $\pi$.
	After replacing $X,\pi, U$ by $Z, \pi_2, \pi_1^{-1}(U)$, we may assume that $\pi$ is a morphism.

	Let $\widetilde{U}, \widetilde{V}, \widetilde{X},\widetilde{Y}$ be the normalisations of $U,V,X,Y$,
	and $\widetilde{\pi}\colon\widetilde{X}\to \widetilde{Y}$ the morphism induced by $\pi$.
	Then $\widetilde{U}$ is open in $\widetilde{X}$,
	$\widetilde{V}$ is open in $\widetilde{Y}$,
	$\widetilde{\pi}(\widetilde{U})=\widetilde{V}$
	and $\widetilde{\pi}|_{\widetilde{U}}\colon \widetilde{U}\to \widetilde{V}$ is an isomorphism.
	If \cref{lembirinverse} holds for $\widetilde{\pi}$, then it also holds for $\pi$.
	So we may assume that $X$ and $Y$ are normal.

	For every $y\in V$, pick $x\in U$, such that $\pi(x)=y$.
	Then $\{x\}=U\cap \pi^{-1}(y)$. So $x$ is an isolated point in $\pi^{-1}(y)$.
	By Zariski's main theorem $\pi^{-1}(y)$ is connected.
	So $\pi^{-1}(y)=\{x\}$.
	Then $\pi^{-1}(V)=U$.
\end{proof}

\begin{setup}\textbf{The set $A_f(x)$.}
	When $X$ is projective and $L$ is a Cartier divisor on $X$,
	denote by $h_L\colon X(\overline{\Q})\to\R$ a Weil height function on $X$ associated to $L$.
	It is unique up to adding a bounded function.
	When we have a morphism $\pi\colon X\to Y$,
	for a Cartier divisor $M$ on $Y$,
	we may choose $h_{\pi^*M}$ to be $h_M\circ\pi$.
	To simplify the notations, in this section, we always make this choice without saying it.

	For a projective admissible triple $(X,f,x)$,
	let $L$ be an ample divisor on $X$,
	we define
	\[
		A_f(x)\subseteq [0,\infty]
	\]
	to be the limit set of the sequence $(h_L^+(f^n(x)))^{1/n}, n\geq 0$
	where $h_L^+(\cdot)\coloneqq\max\{h_L(\cdot),1\}$.

	The following lemma shows that the set $A_f(x)$ does not depend on the choice of $L$
	and is invariant in the birational equivalence class of $(X,f,x)$.
\end{setup}

\begin{lemma}\label{lemsingwilldef}
	Let $\pi\colon X\dashrightarrow Y$ be a dominant rational map between projective varieties.
	Let $U$ be a Zariski dense open subset of $X$ such that $\pi|_U\colon U\to Y$ is well-defined.
	Let $L$ be an ample divisor on $X$ and $M$ an ample divisor on $Y$.
	Then there are constants $C\geq 1$ and $D>0$ such that for every $x\in U$, we have
	\begin{equation}\label{equationdomineq1}
		h_M(\pi(x))\leq Ch_L(x)+D.
	\end{equation}

	Moreover if $V\coloneqq\pi(U)$ is open in $Y$ and $\pi|_U\colon U\to V$ is an isomorphism,
	then there are constants $C\geq 1$ and $D>0$ such that for every $x\in U$, we have
	\begin{equation}\label{equationbirdomineq}
		C^{-1}h_L(x)-D\leq h_M(\pi(x))\leq Ch_L(x)+D.
	\end{equation}
\end{lemma}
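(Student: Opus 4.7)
The plan is to reduce both inequalities to an explicit coordinate computation using very ample embeddings, for which the estimate becomes elementary. After replacing $L$ and $M$ by positive multiples (which only rescales the associated height functions up to $O(1)$, and so only changes the final constants $C$ and $D$), we may assume that both $L$ and $M$ are very ample. Fix closed embeddings $\iota_X\colon X\hookrightarrow\PP^n$ via $|L|$ and $\iota_Y\colon Y\hookrightarrow\PP^m$ via $|M|$; then, up to $O(1)$, $h_L$ and $h_M$ coincide with the restrictions of the standard logarithmic absolute Weil heights on $\PP^n$ and $\PP^m$.

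The composite rational map $\iota_Y\circ\pi\colon X\dashrightarrow\PP^m$ is determined on $U$ by $m+1$ sections of an appropriate line bundle on $U$, which, after clearing denominators, lift to $m+1$ homogeneous polynomials $\Phi_0,\dots,\Phi_m\in\overline{\Q}[x_0,\dots,x_n]$ of a common degree $d$, not all vanishing at any $x\in U$, and satisfying $\pi(x)=(\Phi_0(x):\cdots:\Phi_m(x))$ for every $x\in U$. For any such $x$ and any place $v$, homogeneity yields
\[
	|\Phi_i(x)|_v\le c_{i,v}\,\bigl(\max_j|x_j|_v\bigr)^d,
\]
where $c_{i,v}$ bounds the $v$-adic size of the coefficients of $\Phi_i$ and equals $1$ for all but finitely many $v$. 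Summing over places and using the standard definition of the Weil height produces
\[
	h_M(\pi(x))\le d\cdot h_L(x)+O(1),
\]
which is the first inequality \eqref{equationdomineq1} with $C=\max(d,1)$ and a suitable $D$.

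For the moreover part, under the hypotheses that $V=\pi(U)$ is open in $Y$ and that $\pi|_U\colon U\to V$ is an isomorphism, Lemma~\ref{lembirinverse} ensures that the rational inverse $\pi^{-1}\colon Y\dashrightarrow X$ is well-defined on $V$ and satisfies $\pi^{-1}(V)=U$. Applying the first inequality to the triple $(Y,X,\pi^{-1})$ on the open subset $V$ yields constants $C',D'$ with $h_L(\pi^{-1}(y))\le C'h_M(y)+D'$ for every $y\in V$; specialising to $y=\pi(x)$ for $x\in U$ and rearranging gives the required lower bound $h_M(\pi(x))\ge(C')^{-1}h_L(x)-(C')^{-1}D'$. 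Combining with the upper bound already established yields \eqref{equationbirdomineq}. The only non-routine step is the global representation of $\pi$ by homogeneous polynomials of a common degree in the fixed projective coordinates on $\PP^n$; this is standard from the description of morphisms into $\PP^m$ as $(m+1)$-tuples of sections of a line bundle, and I do not expect any substantive obstacle in executing the argument.
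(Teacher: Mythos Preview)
Your approach is essentially correct, and for the second inequality it is genuinely simpler than the paper's, but there is one point in the first inequality that needs adjustment.

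\textbf{On the first inequality.} The claim that the morphism $\iota_Y\circ\pi|_U$ can be represented by a \emph{single} tuple $(\Phi_0:\cdots:\Phi_m)$ of homogeneous polynomials with no common zero on all of $U$ is not correct in general. For instance, take $U=\PP^1\times\PP^1\hookrightarrow\PP^3$ via Segre and let $\pi$ be a projection to $\PP^1$; then $\pi^*\mathcal{O}(1)\cong\mathcal{O}(1,0)$, while restrictions of $\mathcal{O}_{\PP^3}(d)$ give only $\mathcal{O}(d,d)$, so any polynomial representation has common zeros inside $U$. The fix is routine: finitely many representations $(\Phi^{(j)})$ of degrees $d_j$ cover $U$, and taking $C=\max_j d_j$ and $D=\max_j D_j$ gives \eqref{equationdomineq1}. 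This is exactly the classical proof of height functoriality; the paper instead reduces to a morphism and invokes ampleness of $CL-\pi^*M$, which is the height-machine packaging of the same idea.

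\textbf{On the second inequality.} Your argument---apply \eqref{equationdomineq1} to the inverse rational map $\pi^{-1}$ on $V$, using Lemma~\ref{lembirinverse} to guarantee $\pi^{-1}$ is defined on $V$ with $\pi^{-1}(V)=U$---is valid and noticeably shorter than the paper's route. The paper instead reduces to a birational morphism and runs a sheaf-theoretic argument: it shows that $\pi_*(\pi^*\mathcal{O}_Y(mM)\otimes\mathcal{O}_X(-L))$ is globally generated for $m\gg 0$, and pulling back deduces that $m\pi^*M-L$ has base locus disjoint from $U$, whence the lower bound on $h_M\circ\pi$. Your symmetry argument bypasses this entirely; the paper's argument, on the other hand, makes explicit why the base locus avoids $U$ and would adapt more readily if one wanted effective control over the constants.
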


\begin{proof}
	There is a birational morphism $\pi_1\colon Z\to X$ and a dominant morphism $\pi_2\colon Z\to Y$
	such that $\pi=\pi_2\circ\pi_1^{-1}$ and $\pi_1$ is an isomorphism on $\pi_1^{-1}(U)$.
	If \cref{lemsingwilldef} holds for $\pi_1$ and $\pi_2$, it holds for $\pi$.
	So we may assume that $\pi$ is a morphism.

	For Inequality~\eqref{equationdomineq1},
	we may assume that $CL-\pi^*M$ is ample on $X$ for some integer $C\geq 1$.
	Then there is a constant $D>0$ such that for every $x\in U$,
	\[
		h_M(\pi(x))=h_{\pi^*M}(x)\leq Ch_L(x)+D.
	\]

	Next we prove Inequality~\eqref{equationbirdomineq}
	and hence assume that $\pi$ is a birational morphism.
	By Inequality~\eqref{equationdomineq1},
	we only need to prove the first part of Inequality~\eqref{equationbirdomineq}.
	Assume that $V$ is open in $Y$ and $\pi|_U\colon U\to V$ is an isomorphism.
	By \cref{lembirinverse}, $U=\pi^{-1}(V)$.

	Set $\mathcal{F}(m)=\pi^*\mathcal{O}_Y(mM)\otimes\mathcal{O}_X(-L)$.
	Then $\pi_*\mathcal{F}(m)\cong\mathcal{O}_Y(mM)\otimes\pi_*\mathcal{O}_X(-L)$
	is globally generated for $m\gg 1$.
	We have a surjective morphism of sheaves $\phi\colon\mathcal{O}_Y^{\oplus r}\to\pi_*\mathcal{F}(m)$.
	Pulling back by $\pi$,
	we have $\psi\colon\mathcal{O}_X^{\oplus r}\to\pi^* \pi_*\mathcal{F}(m)\to\mathcal{F}(m)$.

	Now $(\pi^*\pi_*\mathcal{F}(m))|_U\cong\mathcal{F}(m)|_U$,
	so the restriction of $\psi$ to $U$ gives a surjective morphism
	$\psi|_U=\pi^*\phi|_U\colon\mathcal{O}_U^{\oplus r}\to\mathcal{F}(m)|_U$.
	This implies that $m\pi^*M-L$ has its base locus outside $U$.
	Hence there is a constant $D>0$ such that for every $x\in U$, $h_M(\pi(x))\geq m^{-1}h_L(x)-D$.
\end{proof}

\begin{setup}\textbf{The arithmetic degree.}\label{subsec_arithdeg}
	More generally, for every admissible triple $(X,f,x)$, we define $A_f(x)$ to be $A_{f'}(x')$
	where $(X',f',x')$ is an admissible triple
	which is birational to $(X,f,x)$ such that $X'$ is projective.
	By \cref{lemsingwilldef}, this definition does not depend on the choice of $(X',f',x')$.
	We define (see also \cite{KS16}):
	\[
		\overline{\alpha}_f(x)\coloneqq\sup A_f(x),\qquad\underline{\alpha}_f(x)\coloneqq\inf A_f(x).
	\]
	We say that $\alpha_f(x)$ is well-defined and call it the \emph{arithmetic degree} of $f$ at $x$,
	if $\overline{\alpha}_f(x)=\underline{\alpha}_f(x)$;
	and, in this case, we set
	\[
		\alpha_f(x)\coloneqq\overline{\alpha}_f(x)=\underline{\alpha}_f(x).
	\]
	By \cref{lemsingwilldef}, if $(X,f,x)$ dominates $(Y,g,y)$,
	then $\overline{\alpha}_f(x)\geq \overline{\alpha}_g(y)$
	and $\underline{\alpha}_f(x)\geq\underline{\alpha}_g(y)$.
\end{setup}

Applying Inequality~\eqref{equationdomineq1} of \cref{lemsingwilldef} to the case
where $Y=X$ and $M=L$, we get the following trivial upper bound:
let $f\colon X\dashrightarrow X$ be a dominant rational self-map,
$L$ any ample line bundle on $X$ and $h_L$ a Weil height function associated to $L$;
then there is a constant $C\geq 1$ such that for every $x\in X\setminus I(f)$, we have
\begin{equation}\label{equationtrivialupper}
	h_L^+(f(x))\leq Ch_L^+(x).
\end{equation}
For a subset $A\subseteq [1,\infty)$, define $A^{1/\ell}\coloneqq \{a^{1/\ell}\mid a\in A\}$.
We have the following simple properties,
where the second half of \ref{eq:alpha_pow} used Inequality~\eqref{equationtrivialupper}.

\begin{enumerate}[label=(\roman*),ref=(\roman*)]
	\item $A_f(x)\subseteq [1,\infty)$.
	\item $A_f(x)=A_f(f^{\ell}(x))$, for any $\ell\geq 0$.
	\item \label{eq:alpha_pow}
	      $A_{f}(x)=\bigcup_{i=0}^{\ell-1}(A_{f^{\ell}}(f^i(x)))^{1/\ell}$.
	      In particular, $\overline{\alpha}_{f^{\ell}}(x)=\overline{\alpha}_{f}(x)^{\ell}$,
	      $\underline{\alpha}_{f^{\ell}}(x)=\underline{\alpha}_{f}(x)^{\ell}$.
\end{enumerate}

\Cref{lem_subvar} below is easy but fundamental for the reduction to invariant subvarieties.

\begin{lemma}[cf.~e.g.,~{\cite[Lemma~2.5]{MMSZ20}}]\label{lem_subvar}
	Let $f\colon X\to X$ be a surjective endomorphism of a projective variety $X$
	and $W\subseteq X$ an $f$-invariant closed subvariety.
	Then $\alpha_{f|_W}(x)=\alpha_f(x)$ for any $x\in W(\overline\Q)$.
\end{lemma}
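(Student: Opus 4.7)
The plan is to reduce the statement to a direct comparison of Weil heights on $W$ and on $X$. The key observation is that although the inclusion $\iota\colon W\hookrightarrow X$ is not dominant onto $X$ (so the general comparison in Lemma~\ref{lemsingwilldef} does not immediately apply), restriction of an ample divisor is ample, and heights are functorial with respect to closed immersions.

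More precisely, I would start by fixing an ample Cartier divisor $L$ on $X$, and noting that $L_W\coloneqq\iota^*L=L|_W$ is an ample Cartier divisor on $W$ (ampleness is preserved under pullback by a closed immersion). By functoriality of Weil heights, the restriction $h_L\circ\iota$ differs from a Weil height $h_{L_W}$ on $W$ associated to $L_W$ by a bounded function: there is a constant $D>0$ such that for all $y\in W(\overline{\Q})$,
\[
	\lvert h_{L_W}(y)-h_L(y)\rvert\leq D.
\]
Since $x\in W(\overline{\Q})$ and $W$ is $f$-invariant, the orbit $O_f(x)=O_{f|_W}(x)$ lies entirely in $W$, so applying the above inequality along the orbit yields
\[
	\bigl\lvert h_{L_W}^+(f|_W^n(x))-h_L^+(f^n(x))\bigr\rvert\leq D'
\]
for some $D'>0$ and all $n\geq 0$.

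Next, I would pass to $n$-th roots: for any two sequences $a_n,b_n\geq 1$ with $|a_n-b_n|$ bounded, the sequences $a_n^{1/n}$ and $b_n^{1/n}$ have the same set of accumulation points in $[1,\infty]$ (either both are bounded, in which case the $n$-th roots both accumulate only at $1$, or whichever diverges forces the other to diverge at the same exponential rate). Therefore the limit sets agree:
\[
	A_{f|_W}(x)=A_f(x),
\]
where on the left the height is computed with $L_W$ on $W$ and on the right with $L$ on $X$. By the independence of $A_{(\cdot)}(\cdot)$ on the choice of the (projective model and) ample class (Lemma~\ref{lemsingwilldef}), this equality is intrinsic, and taking sup and inf gives $\overline{\alpha}_{f|_W}(x)=\overline{\alpha}_f(x)$ and $\underline{\alpha}_{f|_W}(x)=\underline{\alpha}_f(x)$. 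Consequently, whenever either arithmetic degree is defined, so is the other, and they agree.

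There is no substantive obstacle; the proof is essentially the functoriality of Weil heights under the closed immersion $\iota$ combined with the ampleness of $L|_W$. The only small point to articulate carefully is the elementary fact that bounded differences of positive sequences become invisible after the $n$-th root / $\limsup$ construction defining $A_f(x)$.
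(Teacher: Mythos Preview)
Your proof is correct. Note, however, that the paper does not actually supply a proof of this lemma: it is stated with a citation to \cite[Lemma 2.5]{MMSZ20} and treated as known. Your argument---restrict an ample $L$ on $X$ to an ample $L|_W$ on $W$, use functoriality of Weil heights under the closed immersion to get $h_{L|_W}=h_L|_W+O(1)$, and observe that bounded additive differences vanish after taking $n$-th roots---is exactly the standard proof one would find in such a reference, so there is nothing to compare beyond saying your approach matches the expected one.
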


The next result generalises \cite[Theorem~1.4]{Mat20} to the singular case.

\begin{proposition}\label{proupboundarth}
	For every admissible triple $(X,f,x_0)$,
	we have\linebreak $\overline{\alpha}(x_0)\leq d_1(f)$.
\end{proposition}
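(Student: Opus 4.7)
The plan is to extend Matsuzawa's smooth-projective bound \cite[Theorem 1.4]{Mat20} to the present singular quasi-projective setting via resolution of singularities and graph models. First, by the construction of $A_f(x_0)$ via an arbitrary birational projective admissible triple (see Lemma~\ref{lemsingwilldef} and \ref{subsec_arithdeg}) together with the birational invariance of the dynamical degree (see \ref{dyn_deg}), I would reduce to the case where $X$ is projective. I would then take a resolution of singularities $\pi\colon\widetilde{X}\to X$ with $\widetilde{X}$ smooth projective, and let $\widetilde{f}\colon\widetilde{X}\dashrightarrow\widetilde{X}$ be the induced dominant rational self-map, so that $d_1(\widetilde{f})=d_1(f)$.

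The most direct attack---apply \cite[Theorem 1.4]{Mat20} to $(\widetilde{X},\widetilde{f},\widetilde{x}_0)$ for some lift $\widetilde{x}_0\in\pi^{-1}(x_0)$, then invoke the monotonicity $\overline{\alpha}_f(x_0)\leq\overline{\alpha}_{\widetilde{f}}(\widetilde{x}_0)$ coming from the dominance property recorded after \ref{subsec_arithdeg}---fails precisely when the $\widetilde{f}$-orbit of every lift is ill-defined: whenever $f^n(x_0)\in X_{\Sing}=\pi(\Exc(\pi))$, the whole fibre $\pi^{-1}(x_0)$ lies in $I(\widetilde{f}^n)$. To sidestep this, I would use graph models. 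For each $n\geq 0$ take a smooth projective $Y_n$ with a birational morphism $p_n\colon Y_n\to X$ and a morphism $q_n\colon Y_n\to X$ realising $f^n=q_n\circ p_n^{-1}$. Since $f^n$ is regular at $x_0$, one has $p_n^{-1}(x_0)=\{w_n\}$, a single point, $p_n$ is a local isomorphism at $w_n$, and $q_n(w_n)=f^n(x_0)$. Adopting the pullback convention $h_{\phi^*L}=h_L\circ\phi$ of Lemma~\ref{lemsingwilldef} for a fixed ample divisor $L$ on $X$, one obtains the exact identities $h_L(f^n(x_0))=h_{q_n^*L}(w_n)$ and $h_L(x_0)=h_{p_n^*L}(w_n)$.

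The main technical step is then a height inequality on the smooth projective $Y_n$, uniform in $n$: for every $\epsilon>0$ there should exist constants $C,C'>0$ depending only on $(X,f,L,\epsilon)$ such that
\[
h_{q_n^*L}(w_n)\leq C\,(d_1(f)+\epsilon)^n\cdot h_{p_n^*L}(w_n)+C'\qquad \text{for all }n\geq 0.
\]
Substituting the identities above yields $h_L^+(f^n(x_0))\leq C(d_1(f)+\epsilon)^n h_L^+(x_0)+C'$; taking $n$-th roots and letting $n\to\infty$ and then $\epsilon\to 0$ gives $\overline{\alpha}_f(x_0)\leq d_1(f)$.

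The hard part is producing the displayed inequality with constants that do not deteriorate as $n$ grows and $Y_n$ varies. My strategy would be to derive a divisor decomposition $(d+1)(d_1(f)+\epsilon)^n\,p_n^*L-q_n^*L=N_n-E_n$ on $Y_n$ (with $d=\dim X$), where $N_n$ is nef and $E_n$ is effective and $p_n$-exceptional; then $w_n\notin\Supp(E_n)$ by the local-isomorphism property of $p_n$ at $w_n$, so the contribution of $E_n$ to the height at $w_n$ stays controlled, while the nef $N_n$ contributes non-negatively. Siu's bigness criterion applied with the asymptotic bound $(f^n)^*L\cdot L^{d-1}\leq C_\epsilon(d_1(f)+\epsilon)^n\cdot L^d$ (from the definition of $d_1(f)$) together with the projection formula $(p_n)_*q_n^*L=(f^n)^*L$ on $X$ gives bigness of $(d+1)(d_1(f)+\epsilon)^n p_n^*L-q_n^*L$ for $n$ large; the most delicate point is arranging the decomposition so that the $O(1)$ error terms from the height machine are truly uniform across the varying family $\{Y_n\}$.
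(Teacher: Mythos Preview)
Your reduction to the projective case and your diagnosis of why a naive lift to a fixed resolution fails are both correct. The graph-model strategy, however, has a genuine gap at its central step. You assert a decomposition $(d+1)(d_1(f)+\epsilon)^n\,p_n^*L - q_n^*L = N_n - E_n$ with $N_n$ nef and $E_n$ effective \emph{and $p_n$-exceptional}, but Siu's criterion only delivers bigness, i.e.\ a decomposition (ample) $+$ (effective), and nothing forces the effective part to be $p_n$-exceptional. If instead you write $q_n^*L = p_n^*((f^n)^*L) + F_n$ with $F_n$ exceptional and then decompose $(d+1)(d_1(f)+\epsilon)^n L - (f^n)^*L$ on $X$ as $A_n + G_n$, the term $p_n^*G_n$ is effective but not exceptional and may well pass through $w_n$. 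Even granting some decomposition, the height lower bound $h_{N_n}\geq -C_n$ for a nef divisor carries a constant depending on the pair $(Y_n,N_n)$; obtaining uniformity of $C_n$ across the varying family $\{Y_n\}$ is precisely the content of the proposition, so you have relocated the difficulty rather than resolved it.

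The paper circumvents this entirely by a different idea. It first reduces to the case where $O_f(x_0)$ is Zariski dense in $X$: replace $X$ by the orbit closure $Z_f(x_0)$ (made irreducible after an iterate), using Proposition~\ref{p:dyn_sub_var} to ensure $d_1(f|_{Z_f(x_0)})\leq d_1(f)$ and Lemma~\ref{lem_subvar} to keep $\overline{\alpha}$ unchanged. Then it works on a \emph{single} fixed resolution $\pi\colon Y\to X$ and applies Matsuzawa's estimate \cite[Theorem~3.2]{Mat20} on $Y$ along maximal segments of the orbit lying in the good locus $V$ where $\pi$ is an isomorphism and the lift $g^\ell$ is defined. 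For the remaining orbit points landing in the bad locus $Z'=X\setminus V$, it uses no divisor decomposition at all: the weak dynamical Mordell--Lang theorem \cite[Theorem~1.10]{BHS20} (valid because the orbit is dense) gives that $\{n: f^{\ell n}(x_0)\in Z'\}$ has density zero, so the crude bound $h_L^+(f^\ell(p))\leq C\,h_L^+(p)$ on those steps contributes nothing to the exponential growth rate (Lemma~\ref{lemupperboundmix}). This density-zero input is the missing ingredient in your approach.
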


\begin{proof}
	We may assume that $X$ is projective.
	Let $L$ be an ample divisor on $X$, and
	$h_L$ a Weil height function associated to $L$.
	We may assume that $h_L\geq 1$.
	After replacing $f$ by a suitable iteration and $x_0$ by $f^n(x_0)$ for some $n\geq 0$
	and noting that $d_1(f^n)=d_1(f)^n$ and by \ref{subsec_arithdeg}~\ref{eq:alpha_pow},
	we may assume that the Zariski closure $Z_f(x_0)$ of $O_f(x_0)$ is irreducible.
	After replacing $X$ by $Z_f(x_0)$ and noting that, by \cref{p:dyn_sub_var},
	$d_1(f|_{Z_f(x_0)})\leq d_1(f)$ while the value $\overline{\alpha}(x_0)$
	for the point $x_0$ being in $X$ or in $Z_f(x_0)$ is the same (cf.~\cref{lem_subvar}),
	we may assume that $O_f(x_0)$ is Zariski dense in $X$.

	Take a smooth projective variety $Y$ with a birational surjective morphism $\pi\colon Y\to X$.
	Take a Zariski closed proper subset $\widehat{Z}\subseteq X$
	such that $\pi$ restricts to an isomorphism $Y\setminus Z\to X\setminus\widehat{Z}\eqqcolon U$
	where $Z=\pi^{-1}(\widehat{Z})$.
	Lift $f$ on $X$ to $g\coloneqq\pi^{-1}\circ f\circ\pi\colon Y\dashrightarrow Y$.
	Let $H$ be an ample divisor on $Y$ and $h_H$ a Weil height function associated to $H$
	with $h_H\geq 1$.
	By \cref{lemsingwilldef}, there is a constant $B\geq 1$ such that for every $x\in U$,
	\begin{equation}\label{equationcomparelh}
		B^{-1}h_H(\pi^{-1}(x))\leq h_L(x)\leq Bh_H(\pi^{-1}(x)).
	\end{equation}

	The proof of \cite[Theorem 3.2]{Mat20} showed that for every $r>0$,
	there is a constant $K\geq 1$ and an integer $\ell\geq 1$, such that for every $y\in Y$,
	satisfying $y,g^{\ell}(y),\dots,g^{\ell n}(y)\in Y\setminus I(g^{\ell})$,
	we have
	\begin{equation}\label{equationmat}
		h_H(g^{\ell n}(y)) \le K(d_1(g)+r)^{\ell n}h_H(y).
	\end{equation}

	Set $V\coloneqq\pi(\pi^{-1}(U)\setminus I(g^{\ell}))$ and $Z'\coloneqq X\setminus V$.
	By Inequalities~\eqref{equationcomparelh} and \eqref{equationmat},
	for every point $p\in X$ satisfying $p,f^{\ell}(p),\dots,f^{\ell n}(p)\in V$, we have
	\begin{equation}\label{equgoodlocus}
		h_L(f^{\ell n}(p))\leq B^2K(d_1(f)+r)^{\ell n}h_L(p).
	\end{equation}

	By Inequality~\eqref{equationtrivialupper},
	there is a $C\geq (d_1(f)+r)^{\ell}$ such that for any $p\in X\setminus I(f^{\ell})$,
	we have
	\begin{equation}\label{equationtrivialupperl}
		h_L(f^{\ell}(p))\leq Ch_L(p).
	\end{equation}

	For every $n\geq 0$, define $W(n)\coloneqq\{0\leq i\leq n\mid f^{\ell i}(x_0)\in Z'\}$
	and $w_n\coloneqq\#W(n)$.
	Since $O_f(x_0)$ is Zariski dense in $X$,
	by the weak dynamical Mordell--Lang \cite[Theorem~1.10]{BHS20}
	(see also \cite[Theorem~1.4]{BGT15}, \cite[Theorem~1.6]{Gig14}),
	we have
	\begin{equation}\label{equationwdml}
		\lim_{n\to\infty}w_n/n=0.
	\end{equation}

	By Equality~\eqref{equationwdml} and \cref{lemupperboundmix} below, we have
	\begin{align*}
		\overline{\alpha}_{f}(x_0)^{\ell} & =\overline{\alpha}_{f^{\ell}}(x_0)=\limsup_{n\to \infty}h_{L}(f^{\ell n}(x_0))^{1/n}             \\
		                                  & \leq\lim_{n\to \infty} (B^2K)^{(w_n+1)/n}C^{2w_n/n}(d_1(f)+r)^{\ell(1-w_n/n)}=(d_1(f)+r)^{\ell}.
	\end{align*}
	So we have $\overline{\alpha}_{f}(x_0)\leq d_1(f)+r$.
	Letting $r$ tend to $0$, we get $\overline{\alpha}_{f}(x_0)\leq d_1(f)$.
	This proves \cref{proupboundarth}, modulo \cref{lemupperboundmix} below.
\end{proof}

\begin{lemma}\label{lemupperboundmix}
	With the assumption in the proof of \cref{proupboundarth},
	for $n\geq 0$, we have
	\[
		h_{L}(f^{\ell n}(x_0))\leq (B^2K)^{w_n+1}C^{2w_n} (d_1(f)+r)^{\ell(n-w_n)}h_L(x_0).
	\]
\end{lemma}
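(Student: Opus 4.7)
Write $p_i := f^{\ell i}(x_0)$ for $0 \le i \le n$. The idea is to walk along the orbit $p_0, p_1, \ldots, p_n$, partition the index set $\{0,1,\ldots,n\}$ into maximal consecutive intervals of \emph{good} indices (those with $p_i \in V$) and \emph{bad} indices (those in $W(n)$, i.e., those with $p_i \in Z'$), and telescope. Suppose there are $u$ good intervals of lengths $g_1,\dots,g_u$ and $v$ bad intervals of lengths $b_1,\dots,b_v$, so that $\sum g_j + \sum b_k = n+1$ and $\sum b_k = w_n$. Since the two kinds alternate, $|u-v|\le 1$; in particular $u \le w_n + 1$.

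On each good interval $[a, a+g-1]$, the fact that $p_a, f^\ell(p_a),\ldots, f^{\ell(g-1)}(p_a) \in V$ lets us apply Inequality~\eqref{equgoodlocus} (with its ``$n$'' replaced by $g-1$) to obtain
\[
h_L(p_{a+g-1}) \le B^2 K\,(d_1(f)+r)^{\ell(g-1)}\, h_L(p_a).
\]
For every remaining step $p_i \to p_{i+1}$---either internal to a bad interval or a transition between two consecutive maximal intervals---Inequality~\eqref{equationtrivialupperl} applies (the admissibility of $(X,f,x_0)$ ensures $p_i \notin I(f^\ell)$), contributing a factor of $C$. There are $\sum(b_k-1)+(u+v-1) = w_n + u - 1$ such trivial steps and $\sum(g_j-1) = n + 1 - w_n - u$ ``good'' steps. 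Multiplying all the resulting bounds along the orbit gives
\[
h_L(p_n) \le (B^2 K)^u\,(d_1(f)+r)^{\ell(n+1-w_n-u)}\, C^{\,w_n+u-1}\, h_L(x_0).
\]

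Assuming $u \ge 1$, the inequalities $B^2 K \ge 1$, $C \ge 1$, $d_1(f)+r \ge 1$, together with $u \le w_n + 1$, yield $(B^2 K)^u \le (B^2 K)^{w_n+1}$, $C^{w_n+u-1} \le C^{2 w_n}$, and $(d_1(f)+r)^{\ell(n+1-w_n-u)} \le (d_1(f)+r)^{\ell(n-w_n)}$, which gives the claim. The edge case $u = 0$ (all indices bad, so $w_n = n + 1$) is handled directly: iterating \eqref{equationtrivialupperl} yields $h_L(p_n) \le C^n h_L(x_0)$, which is absorbed by the right-hand side because $C \ge (d_1(f)+r)^\ell$ and $B^2 K \ge 1$. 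The subtle point is the accounting: one must apply \eqref{equgoodlocus} to entire maximal good runs rather than step by step, so that the $B^2 K$ penalty is paid at most $w_n + 1$ times (one per good interval) instead of once per good step, while each bad index inflates the number of trivial-bound steps by at most two---its two boundary steps---explaining the exponent $2 w_n$ of $C$ in the target bound.
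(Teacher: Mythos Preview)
Your proof is correct and follows essentially the same approach as the paper: partition $\{0,\dots,n\}$ into maximal good and bad runs, apply \eqref{equgoodlocus} across each good run and \eqref{equationtrivialupperl} on all remaining steps, then coarsen the exponents using the bound on the number of good runs. The paper parametrises by the number $m$ of bad runs and bounds the number of good runs by $m+1\le w_n+1$, arriving at $(B^2K)^{m+1}C^{w_n+m}(d_1(f)+r)^{\ell(n-w_n-m)}$ before the final estimate; your accounting via $u$ is slightly sharper but leads to the same conclusion, and your explicit treatment of the edge case $u=0$ is a point the paper leaves implicit.
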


\begin{proof}
	Consider the decomposition of the finite set \(W(n)\)
	as a disjoint union of subsets of consecutive integers:
	\[
		W(n)=\bigsqcup_{i=1}^m\{n_i,n_i+1,\dots,n_i+s_i-1\}
	\]
	where $s_i\geq 1$, $n_{i+1}\geq n_i+s_i+1$ for $i=1,\dots,m$.
	We have $\sum_{i=1}^ms_i=w_n$.
	In particular $m\leq w_n$.
	Note that $\{0,\dots,n\}\setminus W(n)$ is a union of at most $m+1$
	maximal subsets of consecutive numbers.
	Applying Inequality~\eqref{equgoodlocus} for those maximal subsets of consecutive numbers
	in $\{0,\dots,n\}\setminus W(n)$ and Inequality~\eqref{equationtrivialupperl} for the others,
	we get
	\begin{align*}
		h_{L}(f^{\ell n}(x_0)) & \leq(B^2K)^{m+1}C^{\sum_{i=1}^m(s_i+1)}(d_1(f)+r)^{\ell(n-\sum_{i=1}^m(s_i+1))}h_L(x_0) \\
		                       & =(B^2K)^{m+1}C^{m+w_n}(d_1(f)+r)^{\ell(n-m-w_n)}h_L(x_0)                                \\
		                       & \leq(B^2K)^{w_n+1}C^{2w_n}(d_1(f)+r)^{\ell(n-w_n)}h_L(x_0).
	\end{align*}
	For the last inequality,
	we used the fact that \(1 \leq m \leq w_n\) and \(d_1(f)+r > 1\).
	This proves \cref{lemupperboundmix} (and also \cref{proupboundarth}).
\end{proof}

The property of KSC~\ref{Conj:KSC} is preserved under generically finite dominant morphism.
Indeed:

\begin{proposition}\label{p:kscequ}
	Let $\pi\colon X\to Y$ be a generically finite dominant morphism
	between quasi-projective varieties over $\overline{\Q}$.
	Let $f\colon X\to X$ and $g\colon Y\to Y$ be
	dominant endomorphisms
	satisfying $\pi\circ f=g\circ \pi$.
	Then we have:
	\begin{enumerate}[leftmargin=1.8em]
		\item The $f$-orbit of $x\in X$ is Zariski dense if and only if
		      so does the $g$-orbit of $\pi(x)\in Y$.
		      In this case, $\alpha_f(x)$ exists if and only if
		      so does $\alpha_g(\pi(x))$ and they take the same value.
		\item KSC~\ref{Conj:KSC} holds for $(X,f)$ if and only if KSC~\ref{Conj:KSC} holds for $(Y,g)$.
	\end{enumerate}
\end{proposition}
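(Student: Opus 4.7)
The main case is part (1); part (2) then follows by combining (1) with the invariance $d_1(f)=d_1(g)$ stated in~\ref{dyn_deg} together with an orbit-lifting argument. For the Zariski-density equivalence in (1), the direction $O_f(x)$ dense $\Rightarrow$ $O_g(\pi(x))$ dense is immediate from $\pi(O_f(x))=O_g(\pi(x))$ and the dominance of $\pi$. Conversely, if $Z\coloneqq\overline{O_f(x)}\subsetneq X$, then every irreducible component of $Z$ has dimension $<\dim X=\dim Y$; since $\pi$ is generically finite, $\dim\overline{\pi(Z)}\leq\dim Z<\dim Y$, so $O_g(\pi(x))\subseteq\pi(Z)$ cannot be Zariski dense, a contradiction.

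For the equality of arithmetic degrees, assume both orbits are Zariski dense. I would choose projective compactifications $\widetilde X\supseteq X$ and $\overline Y\supseteq Y$ together with a morphism $\widetilde\pi\colon\widetilde X\to\overline Y$ extending $\pi$ (obtained by resolving indeterminacy outside $X$), fix ample divisors $L_X$ on $\widetilde X$ and $L_Y$ on $\overline Y$ with heights $h_{L_X},h_{L_Y}\geq 1$, and set $x_n\coloneqq f^n(x)$, $y_n\coloneqq\pi(x_n)$. Inequality~\eqref{equationdomineq1} applied to the morphism $\widetilde\pi$ yields $h_{L_Y}(y_n)\leq C_1 h_{L_X}(x_n)+O(1)$. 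Since $\widetilde\pi$ is generically finite and dominant, $\widetilde\pi^*L_Y$ is big, so $m\widetilde\pi^*L_Y\sim L_X+E$ for some integer $m\geq 1$ and some effective divisor $E$ on $\widetilde X$; standard height theory then gives the reverse bound $h_{L_X}(x_n)\leq m\,h_{L_Y}(y_n)+O(1)$ whenever $x_n\notin\Supp(E)$. The ``bad'' set $W\coloneqq\{n\geq 0:x_n\in\Supp(E)\}$ has natural density zero by the weak dynamical Mordell--Lang theorem~\cite{BHS20} applied to the Zariski dense orbit $O_f(x)$ and the proper closed subset $\Supp(E)\cap X\subsetneq X$.

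Along $\BNN\setminus W$ the two heights are thus comparable up to multiplicative and additive constants, so the sequences $(h_{L_X}^+(x_n))^{1/n}$ and $(h_{L_Y}^+(y_n))^{1/n}$ restricted to $n\notin W$ share $\limsup$, $\liminf$, and limit (when it exists). For $n\in W$, one bridges to the nearest indices in $\BNN\setminus W$ on either side via the trivial bound~\eqref{equationtrivialupper} applied to $f$ and $g$; since any $W$-run containing $n$ has length at most $|W\cap[0,n]|=o(n)$, the multiplicative factors $C_0^{(n-n')/n}$ tend to $1$, so the limiting behaviour over $n\in W$ matches that over $n\notin W$ for both sequences. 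Hence $(h_{L_X}^+(x_n))^{1/n}$ converges if and only if $(h_{L_Y}^+(y_n))^{1/n}$ does, and their limits agree, completing part (1). This bookkeeping is essentially a variant of Lemma~\ref{lemupperboundmix} used in the proof of Proposition~\ref{proupboundarth}, and the density-zero estimate on $W$ is the technical heart of the argument: without it, exponential spikes along $W$ could in principle dominate and destroy the limit equality.

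For part (2), I would combine (1) with $d_1(f)=d_1(g)$ (cf.~\ref{dyn_deg}). Assuming KSC for $(Y,g)$, any $x\in X$ with dense $f$-orbit projects under $\pi$ to a dense $g$-orbit by (1), whence $\alpha_g(\pi(x))=d_1(g)=d_1(f)$, and (1) transfers this to $\alpha_f(x)=d_1(f)$. Conversely, assuming KSC for $(X,f)$ and given $y\in Y$ with dense $g$-orbit, the nonempty open set $U\subseteq Y$ over which $\pi^{-1}(U)\to U$ is finite is visited at density-one many times by weak DML, so $g^N(y)\in U\subseteq\pi(X)$ for some $N\geq 0$; any lift $x_0\in\pi^{-1}(g^N(y))\subseteq X$ has dense $f$-orbit by (1), KSC for $(X,f)$ then yields $\alpha_f(x_0)=d_1(f)$, and shift-invariance of $\alpha_g$ together with (1) concludes $\alpha_g(y)=\alpha_g(g^N(y))=\alpha_f(x_0)=d_1(g)$.
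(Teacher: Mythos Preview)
Your argument is correct, but it takes a genuinely different route from the paper's. The paper does not use weak dynamical Mordell--Lang or a density-zero bridging argument at all. Instead, after choosing compactifications with $\pi$ extending to a morphism $\pi'\colon X'\to Y'$, the paper first treats the case where $\pi'$ is \emph{finite} (then $\pi'^*L_Y$ is ample, so the heights are directly comparable for all $n$, no exceptional set needed). For the general case, the paper normalises and takes the Stein factorisation $X'\to Y_1'\to Y'$; the finite part $Y_1'\to Y'$ is handled as above, reducing to the case where $\pi$ is \emph{birational}. In that case, with $V\coloneqq\{y\in Y:\#\pi^{-1}(y)=1\}$ and $U\coloneqq\pi^{-1}(V)$, Zariski's Main Theorem gives that $\pi|_U\colon U\to V$ is an isomorphism, and the quasi-finiteness of $f$ forces $X\setminus U$ (hence $Y\setminus V$) to be forward-invariant. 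Thus any Zariski-dense $f$-orbit lies \emph{entirely} in $U$, and Lemma~\ref{lemsingwilldef} (the two-sided height comparison over an isomorphism locus) applies directly, with no bad indices to bridge across.

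The trade-off is this: the paper's approach is structurally cleaner and makes essential use of the quasi-finiteness hypothesis on $f$ and $g$ to obtain \emph{complete} avoidance of the bad locus, so that the arithmetic-degree equality reduces to a single application of Lemma~\ref{lemsingwilldef}. Your approach is more analytic and, notably, never invokes the quasi-finiteness of $f$ or $g$ in part~(1); you only need $\pi$ generically finite (for bigness of $\widetilde\pi^*L_Y$) and the orbit to be Zariski dense (for weak DML). So your method would in principle prove part~(1) under weaker hypotheses, at the cost of the bookkeeping you outline (which is indeed the same mechanism as in Lemma~\ref{lemupperboundmix}). Your orbit-lifting in part~(2) is also fine and again does not require quasi-finiteness---Zariski density of $O_g(y)$ alone guarantees that some iterate lands in the open locus where $\pi$ is finite and surjective.
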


\begin{proof}
	We first show that (1) implies (2).
	If KSC~\ref{Conj:KSC} holds for $(Y,g)$, then KSC~\ref{Conj:KSC} holds for $(X,f)$ by (1).
	Assume that KSC~\ref{Conj:KSC} holds for $(X,f)$. Let $y$ be a point in $Y(\overline{\Q})$ of Zariski dense orbit.
	Since $\pi(X)$ contains a non-empty open subset of $Y$, there is $m\geq 0$ such that $g^m(y)\in \pi(X)$. Pick $x\in X(\overline{\Q})$ with $\pi(x)=y.$
	Since $d_1(f)=d_1(g)$ (cf.~e.g.,~\cite{Dan20}), by (1) we get $$\alpha_g(y)=\alpha_g(g^m(y))=\alpha_f(x)=d_1(f)=d_1(g).$$ Hence KSC~\ref{Conj:KSC} holds for $(Y,g)$.

	The first statement of (1) is clear, so we only need to prove the second statement.
	Pick projective compactifications $X'$ of $X$ and $Y'$ of $Y$
	such that $\pi$ extends to a morphism $\pi'\colon X'\to Y'$.
	Pick ample line bundle $M, L$ on $X', Y'$ respectively, such that
	$\pi'^*L-M=\mathcal{O}_{X'}(E)$ for some effective divisor $E$ on $X'$.
	Pick a non-empty open subset $U_Y$ on $Y\setminus \pi'(E)$ such that $\pi: U_X:=\pi^{-1}(U_Y)\to U_Y$ is finite.
	Pick Weil heights $h_M, h_L$ on $X', Y'$ respectively such that
	\begin{enumerate}[leftmargin=1.8em]
		\item[(i)] $h_M\geq 1, h_L\geq 1$;
		\item[(ii)] $h_M\leq \pi'^*h_L$ on $U_X$;
		\item[(iii)] $\pi'^*h_L\leq Ch_M$ for some $C>0$.
	\end{enumerate}
	Set $y:=\pi(x).$
	Set $G:=\{n\geq 0|\,\, g^n(y)\in U_Y\}$ and $B:=\Z_{\geq 0}\setminus G$.
	Since $O_g(y)$ is Zariski dense, $G$ is infinite.
	For $n\in  G$, we have
	\begin{equation}\label{equninG}
		h_M(f^n(x))\leq h_L(g^n(y))\leq Ch_M(f^n(x)).
	\end{equation}
	By (\ref{equationtrivialupper}),  there is $D>0$ such that
	for every $z\in X$, $w\in Y$, we have
	\begin{equation}
		h_M(f(z))\leq Dh_M(z) \text{ and } h_L(g(z))\leq Dh_L(z).
	\end{equation}

	Set $Z:= Y\setminus U_Y$. For every $r\geq 0$, set
	$$Z_r:=\cap_{i=0}^r g^{-i}(Z),$$
	which is a decreasing sequence of Zariski closed subsets of $Y$.
	Then there is $R\geq 0$ such that $Z_r=Z_R$ for $r\geq R.$
	For every $n\geq B\cap \Z_{>R}$,  there are $r(n),s(n)\in \{1,\dots, R+1\}$ such that $n-r(n), n+s(n)\in G.$
	Then we get
	\begin{equation}\label{equninBg}
		D^{-R-1}h_L(g^{n+s(n)}(y))\leq h_L(g^n(y))\leq D^{R+1}h_L(g^{n-r(n)}(y))
	\end{equation}
	and
	\begin{equation}\label{equninBf}
		D^{-R-1}h_M(f^{n+s(n)}(y))\leq h_M(f^n(y))\leq D^{R+1}h_M(f^{n-r(n)}(y)).
	\end{equation}
	We then conclude the proof by (\ref{equninG}), (\ref{equninBg}) and (\ref{equninBf}).
\end{proof}

\Cref{l:iter_inv} below allows us to replace $f$ by any positive power whenever needed.

\begin{lemma}\label{l:iter_inv}
	Let $X$ be a (quasi-projective,
	when we discuss KSC~\ref{Conj:KSC}) variety
	and $f\colon X\dashrightarrow X$ a dominant rational self-map.
	The following statements are equivalent.
	\begin{enumerate}[leftmargin=2em]
		\item $(X,f)$ satisfies ZDO~\ref{Conj:ZDO} (resp.~AZO~\ref{Conj:AZO}, KSC~\ref{Conj:KSC}).
		\item There is an $\ell\geq 1$,
		      such that $(X,f^{\ell})$ satisfies ZDO~\ref{Conj:ZDO}
		      (resp.~AZO~\ref{Conj:AZO}, KSC~\ref{Conj:KSC}).
	\end{enumerate}
\end{lemma}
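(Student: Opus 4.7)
The plan is to derive all three equivalences from a single combinatorial fact about orbits together with the multiplicativity of the dynamical and arithmetic degrees under iteration; this will in fact establish the stronger statement that for every fixed $\ell \geq 1$, the pair $(X,f)$ satisfies the conjecture in question iff $(X,f^\ell)$ does. First I would verify the hypothesis equivalence $\kk(X)^f = \kk \Leftrightarrow \kk(X)^{f^\ell} = \kk$, which is exactly Lemma~\ref{l_inv_fun_field}; this aligns the assumptions of ZDO~\ref{Conj:ZDO} and AZO~\ref{Conj:AZO} for the two pairs.

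Next, the core orbit-invariance statement: for any $x$ at which every iterate of $f$ is defined, $O_f(x)$ is Zariski dense in $X$ iff $O_{f^\ell}(x)$ is. The inclusion $O_{f^\ell}(x) \subseteq O_f(x)$ gives one direction for free. For the nontrivial direction I would decompose
\[
    O_f(x) = \bigcup_{i=0}^{\ell-1} O_{f^\ell}\bigl(f^i(x)\bigr),
\]
write $W_i$ for the Zariski closure of the $i$-th piece, and use irreducibility of $X = \bigcup_i W_i$ to get $W_j = X$ for some $j$. The dominance of $f$ and the fact that $f$ sends the dense subset $O_{f^\ell}(f^i(x)) \subseteq W_i$ into $W_{(i+1) \bmod \ell}$ then give $\overline{f(W_i)} \subseteq W_{(i+1)\bmod \ell}$, which propagates $W_j = X$ cyclically to all $i$; in particular $W_0 = X$.

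Combining these two inputs, the ZDO and AZO equivalences follow formally: the same point, resp.~the same adelic open subset, witnessing the conjecture for one of $f$, $f^\ell$ witnesses it for the other. For KSC~\ref{Conj:KSC} I would additionally invoke $d_1(f^\ell) = d_1(f)^\ell$ (cf.~\ref{dyn_deg}) and property \ref{eq:alpha_pow} in \ref{subsec_arithdeg}, which yield $\overline{\alpha}_{f^\ell}(x) = \overline{\alpha}_f(x)^\ell$ and $\underline{\alpha}_{f^\ell}(x) = \underline{\alpha}_f(x)^\ell$. Since extraction of $\ell$-th roots on $[1,\infty)$ preserves both the equality $\overline{\alpha} = \underline{\alpha}$ (i.e.,~convergence of $\alpha$) and the identity $\alpha = d_1$, the arithmetic degree converges and agrees with the dynamical degree for $f$ at $x$ iff it does so for $f^\ell$ at $x$.

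I do not anticipate any real obstacle: the proof is a bookkeeping exercise combining Lemma~\ref{l_inv_fun_field} with the multiplicativity identities already recorded. The only care point is that in the rational-map setting one must justify $\overline{f(W_i)} \subseteq W_{(i+1)\bmod \ell}$ without $f$ being a morphism, but the standing well-definedness of $O_f(x)$ places the forward iterates in the domain of $f$, which suffices to run the cyclic propagation argument.
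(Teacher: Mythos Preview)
Your proposal is correct and follows essentially the same approach as the paper: the paper defers the ZDO and AZO parts to \cite[Propositions~2.2 and 3.27]{Xie19} (whose content is precisely the orbit-decomposition argument you spell out together with Lemma~\ref{l_inv_fun_field}), and handles KSC via the identities $d_1(f^\ell)=d_1(f)^\ell$ and $\alpha_{f^\ell}(x)=\alpha_f(x)^\ell$ plus the orbit-density equivalence, exactly as you do. Your write-up is thus a self-contained version of the paper's citation-based proof.
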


\begin{proof}
	For the ZDO and AZO parts, we refer to \cite[Propositions 2.2 and 3.29]{Xie19}.

	For the KSC part, note that $d_1(f)^{\ell}=d_1(f^{\ell})$
	and $\alpha_f(x)^{\ell}=\alpha_{f^{\ell}}(x)$ when the latter exists,
	for any $\ell\geq 1$ and $x\in X$ (cf.~\ref{subsec_arithdeg}~\ref{eq:alpha_pow}).
	Also, $O_f(x)$ is Zariski dense if and only if so is $O_{f^{\ell}}(x)$.
	Then the lemma follows.
\end{proof}

\section{Endomorphisms descending along a fibration}

This section treats endomorphisms descending along fibrations,
especially $\A^1$-fibrations.

\begin{lemma}\label{l:surj}
	Let $X_i$, $B_i$ be varieties,
	and $\pi_i\colon X_i\to B_i$ a surjective morphism with a general fibre irreducible ($i=1,2$).
	Let $f\colon X_1\to X_2$ be a finite surjective morphism
	which descends to a surjective morphism $g\colon B_1\to B_2$.
	Then $f|_{(X_1)_b}\colon (X_1)_b\to (X_2)_{g(b)}$ is surjective for any closed point $b\in B_1$.
\end{lemma}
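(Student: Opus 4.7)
The plan is to establish the nontrivial inclusion $(X_2)_{g(b)}\subseteq f((X_1)_b)$; the reverse inclusion is immediate from $\pi_2\circ f=g\circ\pi_1$. To this end, I would form the fibre product $X_2'\coloneqq X_2\times_{B_2}B_1$ with its canonical projections $p_1\colon X_2'\to X_2$ and $p_2\colon X_2'\to B_1$, together with the natural morphism $\varphi\coloneqq (f,\pi_1)\colon X_1\to X_2'$ supplied by the commuting square. The closed fibre $p_2^{-1}(b)$ is canonically identified with $(X_2)_{g(b)}$ via $p_1$, and under this identification $\varphi|_{(X_1)_b}$ is precisely $f|_{(X_1)_b}$. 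Thus it suffices to prove that $\varphi$ is surjective onto $X_2'$.

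First I would check that $\varphi$ is finite. It is proper because $p_1$ is separated and the composition $p_1\circ\varphi=f$ is proper (being finite); hence $\varphi$ is proper. Each fibre of $\varphi$ embeds in a fibre of $f$, so is finite, making $\varphi$ quasi-finite. A proper quasi-finite morphism is finite. Next I would argue that $X_2'$ is an irreducible variety of dimension $\dim X_1$. The hypothesis that the general fibre of $\pi_2$ is irreducible, combined with $\ch(\kk)=0$ and the algebraic closedness of $\kk$, implies that the generic fibre of $\pi_2$ is geometrically integral over $k(B_2)$; base-changing along the dominant morphism $g$ preserves integrality of the generic fibre of $p_2\colon X_2'\to B_1$, and the standard fact that the fibre product of integral schemes along a morphism with geometrically integral generic fibre is integral then forces $X_2'$ to be integral. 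A dimension count, using $\dim X_1=\dim X_2$ (from $f$ being finite surjective) and the matching $\dim B_1=\dim B_2$ operative in this section (the fibrations $\pi_i$ are meant to have matching generic fibre dimensions), gives $\dim X_2'=\dim X_1$.

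Combining these, $\varphi(X_1)$ is a closed subset of $X_2'$ of dimension $\dim X_1=\dim X_2'$, and by irreducibility of $X_2'$ we get $\varphi(X_1)=X_2'$. Taking the fibre of $p_2$ over $b$ then yields $f|_{(X_1)_b}\colon (X_1)_b\to (X_2)_{g(b)}$ surjective, which is the desired conclusion. The main technical hurdle will be the integrality of $X_2'$, namely ruling out vertical components of $X_2'$ failing to dominate $B_1$; this rests on the geometric integrality of the generic fibre of $\pi_2$ through a careful base-change argument, but is standard in the fibration context envisaged by the lemma.
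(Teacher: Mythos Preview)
Your argument is correct and follows essentially the same route as the paper's: form the fibre product $X_2'=X_2\times_{B_2}B_1$, show that the induced map $X_1\to X_2'$ is finite and surjective (the paper factors it as a closed immersion $X_1\hookrightarrow X_1\times_{B_2}B_1$ followed by the base change $f\times_{B_2}\id$, whereas you use the proper-plus-quasi-finite criterion, and the paper deduces dominance from $\phi_1$ being generically finite rather than your explicit dimension count), and then read off surjectivity on each fibre over $b\in B_1$.  Your caveat about needing $\dim B_1=\dim B_2$ is exactly the point where the paper silently assumes $\phi_1$ is generically finite; in all applications the $B_i$ are curves and $g$ is a surjective endomorphism, so this is harmless.
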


\begin{proof}
	Set $X_2'\coloneqq X_2\times_{B_2}B_1$.
	Since a general fibre of $\pi_2$ is geometrically irreducible, $X_2'$ is irreducible.
	Denote by $\phi_1\colon X_2'\to X_2$
	and $\phi_2\colon X_2'\to B_1$ the two natural projections, respectively.
	The natural morphism $\tau\colon X_1\to X_1\times_{B_2}B_1$ is a closed embedding,
	noting that $g$ is separated.
	Since $f$ is finite, $f\times_{B_2}\id\colon X_1\times_{B_2}B_1\to X_2'=X_2\times_{B_2}B_1$ is finite.
	Then $f'\coloneqq(f\times_{B_2}\id)\circ\tau\colon X_1\to X_2'$ is finite.
	We have the following commutative diagram.
	\[
		\xymatrixcolsep{3pc}
		\xymatrix{
		X_1\ar[r]^{f'}\ar[d]_{\pi_1}\ar@/^16.5pt/[rr]^{f}	& X_2'\ar[r]^{\phi_1}\ar[dl]^{\phi_2}	& X_2 \ar[d]^{\pi_2}\\
		B_1\ar[rr]^g										&										& B_2
		}
	\]
	Since $\phi_1$ is generically finite and $f$ is surjective, $f'$ is dominant.
	Then $f'$ being finite and dominant implies it is surjective.
	For any $b\in B_1$, $f'((X_1)_b)=(X_2')_b$.
	Since $\phi_1((X_2')_b)=(X_2)_{g(b)}$,
	we have $f((X_1)_b)=\phi_1(f'((X_1)_b))=\phi_1((X_2')_b)=(X_2)_{g(b)}$.
\end{proof}

\begin{lemma}\label{l:descending}
	Let $f\colon X_1\to X_2$ be a morphism between normal varieties,
	and $\pi_i\colon X_i\to B_i$ a surjective morphism to a smooth curve ($i=1,2$).
	Suppose that a general fibre $F_1$ of $\pi_1$ is irreducible.
	Suppose further that $f(F_1)$ is contained in a fibre of $\pi_2$
	(this holds when $F_1$ is a curve with $\overline{\kappa}(B_2)>\overline{\kappa}(F_1)$,
	e.g., when $\pi_1$ is an $\A^1$-fibration and $\overline{\kappa}(B_2)\geq 0$).
	Then $f$ descends to a morphism $f|_{B_1}\colon B_1\to B_2$ (which is surjective if so is $f$).
\end{lemma}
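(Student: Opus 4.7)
The strategy is to show that the morphism $h \coloneqq \pi_2 \circ f \colon X_1 \to B_2$ factors through $\pi_1$, i.e., $h = g \circ \pi_1$ for some morphism $g\colon B_1 \to B_2$. Shrinking to a dense open subset $U_0 \subseteq B_1$, we may assume every fibre of $\pi_1$ over $U_0$ is irreducible and mapped by $f$ into a single fibre of $\pi_2$, so $h$ is constant on each such fibre and yields a well-defined set-theoretic map $g\colon U_0 \to B_2$ with $h = g \circ \pi_1$ on $\pi_1^{-1}(U_0)$. To confirm $g$ is regular on $U_0$, fix a smooth projective completion $\overline{B_2}\supseteq B_2$ and let $\overline{\Gamma}$ be the closure in $U_0\times\overline{B_2}$ of the image of $(\pi_1,h)|_{\pi_1^{-1}(U_0)}$; then $\overline{\Gamma}\to U_0$ is proper and generically bijective, hence (as $U_0$ is a smooth curve) an isomorphism by Zariski's Main Theorem, and composing the inverse with projection to $\overline{B_2}$ recovers $g$, which by construction takes values in $B_2\subseteq\overline{B_2}$ over $U_0$.

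The next step is to extend $g$ to all of $B_1$. Since $B_1$ is a smooth curve and $\overline{B_2}$ is projective, the rational map $g\colon B_1\dashrightarrow\overline{B_2}$ extends uniquely to a morphism $\overline{g}\colon B_1\to\overline{B_2}$. The main obstacle — because $B_2$ is only quasi-projective — is to verify that $\overline{g}(B_1)\subseteq B_2$. Given any $b_1\in B_1$ and any $x\in\pi_1^{-1}(b_1)$, the plan is to produce a smooth curve $C$ with a morphism $\phi\colon C\to X_1$ satisfying $\phi(c_0)=x$ for some $c_0\in C$ and with $\pi_1\circ\phi\colon C\to B_1$ dominant; such a $C$ can be obtained by cutting a projective compactification of $X_1$ with a suitable chain of generic hyperplane sections through $x$. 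Then $h\circ\phi\colon C\to B_2$ and $\overline{g}\circ\pi_1\circ\phi\colon C\to\overline{B_2}$ are morphisms that agree on a dense open subset of $C$, hence agree everywhere by separatedness; evaluating at $c_0$ yields $\overline{g}(b_1)=h(x)\in B_2$.

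Once the morphism $g\colon B_1\to B_2$ is in hand, the equality $g\circ\pi_1=\pi_2\circ f$ holds on $\pi_1^{-1}(U_0)$ and extends to all of $X_1$ by separatedness of $B_2$. For surjectivity, if $f$ is surjective then $g\circ\pi_1=\pi_2\circ f$ is surjective (since $\pi_2$ is), and this forces $g$ to be surjective because $\pi_1$ is. Finally, the parenthetical claim reduces to the classical fact that a non-constant morphism between smooth curves cannot strictly increase $\overline{\kappa}$; applied to the restriction of $\pi_2\circ f$ to the smooth locus of a general fibre $F_1$, this forces the restriction to be constant whenever $\overline{\kappa}(B_2)>\overline{\kappa}(F_1)$, so $f(F_1)$ indeed lies in a single fibre of $\pi_2$.
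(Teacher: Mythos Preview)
Your proof is correct and complete, but it takes a different route from the paper's. The paper proceeds by choosing normal projective compactifications $X_i\subseteq\overline{X_i}$ and $B_i\subseteq\overline{B_i}$ so that $\pi_i$ and $f$ all extend to morphisms; since $\overline{B_1}$ is a smooth curve, $\overline{\pi_1}$ is flat with equi-dimensional connected fibres, and the \emph{rigidity lemma} then forces $\overline{\pi_2}\circ\overline{f}$ to factor through $\overline{\pi_1}$, producing $\overline{f}|_{\overline{B_1}}\colon\overline{B_1}\to\overline{B_2}$ in one stroke. The check that $\overline{f}|_{\overline{B_1}}(B_1)\subseteq B_2$ is then immediate from the surjectivity of $\pi_1$, since each $b\in B_1$ is sent to $\pi_2(f((X_1)_b))\in B_2$. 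Your argument avoids compactifying the $X_i$ and the rigidity lemma altogether: you build $g$ by hand on a dense open $U_0\subseteq B_1$ via a graph-closure and Zariski's Main Theorem, extend along the smooth curve $B_1$ into $\overline{B_2}$, and verify the image lies in $B_2$ by probing with curves through arbitrary points of $X_1$. The paper's approach is shorter and more conceptual once one is willing to invoke rigidity; yours is more self-contained and elementary, and your curve-through-$x$ argument for $\overline{g}(b_1)\in B_2$ is essentially a hands-on substitute for what the rigidity lemma delivers globally.
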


\begin{proof}
	Take normal compactifications $X_i\subseteq\overline{X_i}$, $B_i\subseteq\overline{B_i}$
	such that the natural extensions
	$\overline{\pi_i}\colon\overline{X_i}\dashrightarrow\overline{B_i}$ ($i=1,2$)
	and $\overline{f}\colon\overline{X_1}\dashrightarrow\overline{X_2}$ are morphisms.

	Since $\overline{B_1}$ is smooth of dimension one,
	$\overline{\pi_1}$ is flat and hence has equi-dimensional connected fibres.
	By the assumption and the rigidity lemma (cf.~\cite[Lemma~1.15]{Deb01}),
	$\overline{\pi_2}\circ\overline{f}$ contracts one and hence every fibre of $\overline{\pi_1}$,
	and then $\overline{\pi_2}\circ\overline{f}$ factors through $\overline{\pi_1}$
	via some $\overline{f}|_{\overline{B_1}}\colon\overline{B_1}\to\overline{B_2}$.
	Since $\pi_1\colon X_1\to B_1$ is surjective,
	every $b\in B_1$ is mapped to $\pi_2(f((X_1)_b))\in B_2$,
	i.e., $\overline{f}|_{\overline{B_1}}(B_1)\subseteq B_2$.
	Hence the (surjective) morphism $f\colon X_1\to X_2$ descends,
	via the surjections $\pi_i\colon X_i\to B_i$,
	to a (surjective) morphism $f|_{B_1}\colon B_1\to B_2$.

	Note that if $F_1$ is a curve and $f(F_1)$ dominates $B_2$ via $\pi_2$
	then $\overline{\kappa}(F_1)\geq\overline{\kappa}(f(F_1))\geq\overline{\kappa}(B_2)$,
	and note also $\overline{\kappa}(\A^1)=-\infty$.
	Then the lemma follows.
\end{proof}

\begin{proposition}\label{p:bad_fib}
	Let $\pi\colon X\to B$ be a surjective morphism from a normal quasi-projective variety to a smooth curve with a general fibre irreducible.
	Let $f\colon X\to X$ be a finite surjective morphism which descends to a surjective morphism $g\colon B\to B$
	(this is the case when \cref{l:descending} is satisfied).
	Let $\Sigma_0\subseteq B$ (resp.~$\Sigma_1\subseteq B$) be the set of $b\in B$ such that the fibre $X_b$ is reducible (resp.~irreducible and non-reduced).
	Then we have:
	\begin{enumerate}[leftmargin=2em]
		\item $g^{-1}(\Sigma_0)=\Sigma_0$.
		      Hence $g$ induces a surjective endomorphism of $B\setminus\Sigma_0$.
		\item If $g$ is {\'e}tale (this is the case when $\overline{\kappa}(B)\geq 0$; cf.~\cref{lem:k>0_et}), then $g^{-1}(\Sigma_1)=\Sigma_1$, hence $g$ induces a surjective endomorphism of\linebreak $B\setminus (\Sigma_0\cup\Sigma_1)$.
		\item If $\overline{\kappa}(B)=1$, or if $\overline{\kappa}(B)\geq 0$ and $\pi$ has a reducible fibre or a non-reduced fibre, then $g$ is an automorphism of finite order.
	\end{enumerate}
\end{proposition}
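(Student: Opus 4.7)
The plan is to prove (1), (2), (3) in order, with (1)--(2) feeding into (3). Two preliminary observations I will rely on: both $\Sigma_0$ and $\Sigma_1$ are finite subsets of $B$ (the general fibre of $\pi$ is irreducible by hypothesis and generically reduced by generic smoothness in characteristic zero, confining them to the complement of a dense open subset of the smooth curve $B$), and the surjective morphism $g\colon B \to B$ is quasi-finite with finite fibres.

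For (1), I first show $g^{-1}(\Sigma_0) \subseteq \Sigma_0$. Lemma~\ref{l:surj} gives that $f|_{X_b}\colon X_b \to X_{g(b)}$ is finite surjective; since finite morphisms send irreducible schemes to irreducible schemes, $X_b$ irreducible would force $X_{g(b)}$ irreducible, contradicting $g(b) \in \Sigma_0$. The reverse inclusion is then automatic from surjectivity of $g$ and finiteness of $\Sigma_0$: each $b_0 \in \Sigma_0$ contributes at least one (disjoint) preimage, so $|g^{-1}(\Sigma_0)| \geq |\Sigma_0|$, which together with $g^{-1}(\Sigma_0) \subseteq \Sigma_0$ forces $g^{-1}(\Sigma_0) = \Sigma_0$. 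The surjective-endomorphism statement on $B \setminus \Sigma_0$ follows formally.

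For (2), with $g$ \'etale, I would use the Cartier-divisor identity (using that $\pi$ is flat, coming from a dominant morphism from an integral scheme to a regular curve)
\[
	f^*(X_{g(b)}) = f^*\pi^*(g(b)) = \pi^*g^*(g(b)) = \sum_{b' \in g^{-1}(g(b))} X_{b'},
\]
the last equality using that all ramification indices of $g$ are $1$. For $b$ with $g(b) \in \Sigma_1$, write $X_{g(b)} = mD$ with $m \geq 2$ and $D$ reduced irreducible. By (1), each $X_{b'}$, $b' \in g^{-1}(g(b))$, is irreducible; write $X_{b'} = n^{b'} C^{b'}$. Expanding $f^*(D) = \sum_{b'} r_{b'} C^{b'}$, with $r_{b'} \geq 1$ since each $C^{b'}$ must map onto the unique component $D$, and comparing coefficients yields $m r_{b'} = n^{b'}$; in particular $n^b \geq m \geq 2$, so $X_b$ is non-reduced. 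The cardinality argument of (1) then upgrades $g^{-1}(\Sigma_1) \subseteq \Sigma_1$ to equality.

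For (3), the case $\overline{\kappa}(B) = 1 = \dim B$ is immediate from Lemma~\ref{l:finord}. Otherwise $\overline{\kappa}(B) \geq 0$ and $S \coloneqq \Sigma_0 \cup \Sigma_1 \neq \emptyset$. Lemma~\ref{lem:k>0_et} forces $g$ \'etale; parts (1)--(2) give $g^{-1}(S) = S$; \'etaleness then yields $|g^{-1}(S)| = \deg(g)|S|$, hence $\deg(g) = 1$ and $g \in \Aut(B)$. Since $g$ preserves the non-empty finite set $S$, each $g$-orbit in $S$ is finite; and in the remaining subcase $B \cong \G_m$ or an elliptic curve, a direct inspection of $\Aut(B)$ shows the stabiliser of $S$ is finite---scalings $t \mapsto at$ of $\G_m$ preserving $S$ must be by roots of unity, and on an elliptic curve $E$ one has $(t_c \circ \phi)^{\ord(\phi)} = t_{\sum_{k=0}^{\ord(\phi)-1}\phi^k(c)}$ reducing to the identity whenever $\phi \neq \id$ (using that $\End(E)$ is an integral domain so $\sum_{k=0}^{\ord(\phi)-1}\phi^k = 0$), so the problem reduces to translations by torsion points---hence $g$ has finite order. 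The main obstacle I anticipate is the multiplicity bookkeeping in (2), unpacking $f^*\pi^* = \pi^*g^*$ at the Cartier-divisor level and verifying $r_{b'} \geq 1$; the finite-set inclusions and the structural finite-order argument are then short.
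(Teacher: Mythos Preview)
Your argument is correct. Parts (1) and (2) match the paper's approach essentially verbatim, with your (2) spelling out the multiplicity computation that the paper compresses into the single sentence ``$g^*b$ is reduced, so $g^{-1}(\Sigma_1)\subseteq\Sigma_1$''. Your worry about verifying $r_{b'}\geq 1$ is unfounded: the identity $m\,f^*D=\sum_{b'}n^{b'}C^{b'}$ already forces every $C^{b'}$ to appear in $f^*D$ with positive coefficient, so the separate surjectivity check is not needed.

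The genuine difference is in (3). The paper does not analyse $\Aut(B)$ at all: once (1)--(2) give $g^{-1}(S)=S$ with $S\neq\emptyset$, it simply observes that $g$ restricts to a surjective endomorphism of $B'\coloneqq B\setminus S$, and since $\overline{\kappa}(B)\geq 0$ and $S\neq\emptyset$ one has $\overline{\kappa}(B')=1$, so Lemma~\ref{l:finord} applied to $B'$ finishes immediately and uniformly. Your route---\'etaleness forces $\deg(g)=1$ via the fibre count, then a case split on $\Aut(\G_m)$ versus $\Aut(E)$---also works, and has the virtue of giving $\deg(g)=1$ by an elementary counting argument rather than by quoting a Kodaira-dimension rigidity theorem; but it is longer and requires the ad hoc inspection of automorphism groups. (One small omission: $\Aut(\G_m)$ also contains the maps $t\mapsto at^{-1}$, but these square to the identity, so this is harmless.) The paper's argument is cleaner precisely because it subsumes both the degree-$1$ statement and the finite-order statement into a single application of Lemma~\ref{l:finord}.
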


\begin{proof}
	\Cref{l:surj} implies that $f$ maps the fibre $X_b$ onto $X_{g(b)}$ for any $b\in B$.
	Thus $g^{-1}(\Sigma_0)\subseteq\Sigma_0$.
	Now $\Sigma_0$ is a finite set, so $g^{-1}(\Sigma_0)=\Sigma_0$.

	If $g$ is {\'e}tale, then $g^*b$ is reduced for any $b\in B$.
	This implies that $g^{-1}(\Sigma_1)\subseteq\Sigma_1$.
	Since $\Sigma_1$ is a finite set, we have $g^{-1}(\Sigma_1)=\Sigma_1$.

	(3) follows from (1), (2) and \cref{l:finord}.
\end{proof}

\begin{lemma}\label{lem:A1_fib}
	Let $X$ be a smooth affine surface and $\pi\colon X \to B$ an $\A^1$-fibration.
	Then every fibre has support equal to a disjoint union of $\A^1$'s.
	Hence, either $\pi$ is an $\A^1$-bundle, or it has a reducible or non-reduced fibre.
\end{lemma}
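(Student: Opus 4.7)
The plan is to pass to a log smooth compactification $(V,D)$ of $X$ with $\pi$ extending to a $\PP^1$-fibration $\bar\pi\colon V\to\bar B$, obtained by blow-ups and blow-downs inside $D$ along the lines of the proof of Lemma~\ref{l:cont}. Since the general fibre of $\pi$ is $\A^1\cong\PP^1\setminus\{\mathrm{pt}\}$, the horizontal part of $D$ must consist of a single smooth cross-section $H$ of $\bar\pi$, and $H\cdot\bar F=1$ for every fibre $\bar F$ of $\bar\pi$. Fix $b\in B$ and write $\bar F=\bar\pi^{*}(b)=\sum_{i}m_{i}C_{i}$ as a tree of smooth rational curves on $V$. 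Set $I_{D}=\{i:C_{i}\subseteq D\}$ and let $I_{X}$ be its complement, so that the support of the fibre $F=\pi^{*}(b)$ equals $\bigcup_{i\in I_{X}}(C_{i}\setminus D)$ as a reduced subscheme. The unique component $C_{0}$ met by $H$ is forced to have multiplicity $m_{0}=1$.

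To prove the first assertion it suffices to show: (i) each $i\in I_{X}$ has exactly one neighbor in the dual graph of $\bar F$, with this neighbor lying in $I_{D}$ (or, when $i=0$, replaced by $H$ in the extended graph); and (ii) distinct components $C_{i},C_{j}$ with $i,j\in I_{X}$ are disjoint on $V$. Together these yield $\#(C_{i}\cap D)=1$, hence $C_{i}\setminus D\cong\A^{1}$, with the components in $I_{X}$ pairwise disjoint on $X$. I would establish (i) and (ii) by induction on the number of components of $\bar F$: the inductive step contracts a $(-1)$-curve $E\subseteq D\cap\bar F$ that meets some other component of $D$ (so that the image of $E$ remains in the boundary, preserving the open subset $X$ and its fibre $F$). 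The base case is $\bar F$ irreducible, in which $\bar F\cong\PP^1$ and the conclusion follows immediately from $H\cdot\bar F=1$. When no such contractible $E$ exists, one invokes Lemma~\ref{l:exc} (a unique $(-1)$-curve in $\bar F$ has multiplicity $\geq 2$), combined with $m_{0}=1$, the affineness of $X$ via Lemma~\ref{l:D_amp}, and the tree combinatorics of $\bar F$, to rule out the remaining configurations.

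The main obstacle is precisely this combinatorial induction, in particular the subcase where every $(-1)$-curve of $\bar F$ lies in $I_{X}$ or is otherwise non-contractible within $D$; this is essentially the content of Miyanishi's structure theorem for $\A^{1}$-fibrations on smooth affine surfaces. For the ``Hence'' part, if no fibre of $\pi$ is reducible or non-reduced, then the first assertion forces every scheme-theoretic fibre to be a single reduced $\A^{1}$, so $\pi$ is smooth of relative dimension one with every fibre $\A^{1}$; by the classical theorem of Kambayashi--Miyanishi, such a morphism is an $\A^{1}$-bundle, concluding the proof.
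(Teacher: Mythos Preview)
Your setup matches the paper's: a log smooth compactification $(V,D)$ with $\bar\pi\colon V\to\bar B$ extending $\pi$, boundary $D=H+\text{(fibre components)}$ with $H$ a cross-section, and $\bar F=\sum_i m_iC_i$ an SNC tree of smooth rational curves. Where you diverge is the mechanism for (i)--(ii). The paper does not contract anything and does not use Lemma~\ref{l:exc}; it argues directly from three facts already on the table: (a) $D$ is connected (Lemma~\ref{l:D_amp}); (b) the extended dual graph $\Gamma'$ of $\bar F$ together with $H$ is a tree; (c) the affine $X$ contains no complete curve. From (a), the vertex set $\{H\}\cup I_D$ spans a \emph{connected} subtree $\Gamma'_D\subseteq\Gamma'$: any $D$-component of $\bar F$ is joined to $H$ by a chain inside $D$, and such a chain cannot leave the fibre. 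Deleting a connected subtree from a tree leaves a forest whose components are each attached to $\Gamma'_D$ by exactly one edge. If some component of this forest had two or more vertices, any vertex other than the attaching one would be a $C_i$ with all $\Gamma'$-neighbours in $I_X$, hence $C_i\cap D=\emptyset$ and $C_i\cong\PP^1\subseteq X$, contradicting (c). Thus every $i\in I_X$ is a leaf of $\Gamma'$ adjacent to a single vertex of $\Gamma'_D$; this is exactly your (i), and (ii) follows at once.

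So the ``main obstacle'' you flag is an artifact of the contraction-induction approach and does not arise in the paper's argument; neither Miyanishi's full structure theorem nor any $(-1)$-curve bookkeeping is needed for the first assertion. Your treatment of the ``Hence'' clause via Kambayashi--Miyanishi is fine, though one can also stay inside the compactification: once every fibre of $\pi$ is a single reduced $\A^1$, contracting the $I_D$-components over $B$ as in Lemma~\ref{l:cont} makes $\bar\pi$ a $\PP^1$-bundle over $B$ with $X$ the complement of the section $H$.
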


\begin{proof}
	Extend $\pi$ to $\overline{\pi}\colon\overline{X}\to \overline{B}$ from a smooth projective surface to a smooth projective curve with the boundary $D=\overline{X}\setminus X$ an SNC divisor.
	Then $D$ consists of a cross-section of $\pi$ and some fibre components.
	Noting that $D$ supports a connected ample effective divisor by \cref{l:D_amp},
	the affine $X$ contains no compact $\PP^1$,
	and every fibre of $\overline{\pi}\colon\overline{X}\to \overline{B}$ consists of $\PP^1$'s
	and has dual graph a connected tree, the lemma follows.
\end{proof}

\begin{proposition}\label{p:big_pi_1}
	Let $X$ be a smooth affine surface over $\kk$ and $f\colon X\to X$ a finite surjective morphism.
	Let $\pi\colon X\to B$ be a surjective $\A^1$-fibration.
	Suppose that $\vert\pi_1^{\et}(X)\vert=\infty$, and $B=\A^1$ or $\PP^1$.
	Then $f$ descends to an automorphism $f|_B$ on $B$ of finite order.
\end{proposition}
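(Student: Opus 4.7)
I would argue in two stages: first, that $f$ descends to a morphism $g\colon B\to B$; second, that $g$ is an automorphism of finite order.

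For the descent, note that $\overline\kappa(B)=-\infty$ when $B\in\{\A^1,\PP^1\}$, so Lemma~\ref{l:descending} does not apply directly, and the hypothesis $|\pi_1^{\et}(X)|=\infty$ must be used. I would argue by contradiction: suppose $f$ does not send a general fiber $F\cong\A^1$ of $\pi$ into a single fiber. Then $f(F)$ is a multisection of $\pi$, and the morphism $(\pi,\pi\circ f)\colon X\to B\times B$ is dominant and generically finite, producing a second $1$-parameter family $\{f(F_b)\}_{b\in B}$ of curves covering $X$ transverse to $\{F_b\}$. The key topological input is that every connected \'etale cover $\widetilde X\to X$ restricts trivially to each general fiber of $\pi$ (since $\pi_1^{\et}(\A^1)=1$), so such covers of $X$ are ``vertical'' with respect to $\pi$, i.e., classified by orbifold \'etale covers of $B$ with ramification data from the multi-/reducible fibers. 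A genuinely transverse covering family $\{f(F_b)\}$ would force these vertical covers to also trivialise along each $f(F_b)$, bounding the total monodromy and making $\pi_1^{\et}(X)$ finite -- a contradiction. Hence $f$ preserves the fibration and, by the rigidity lemma on a log smooth compactification, descends to some $g\colon B\to B$.

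For the second stage, suppose $\deg g\geq 2$ for contradiction. Proposition~\ref{p:bad_fib}(1) gives $g^{-1}(\Sigma_0)=\Sigma_0$; surjectivity of $g$ then forces each $c\in\Sigma_0$ to be a totally ramified value of $g$, and Riemann--Hurwitz bounds $|\Sigma_0|\leq 2$ when $B=\PP^1$ and $|\Sigma_0|\leq 1$ when $B=\A^1$ (using total ramification at $\infty$ for the extension of $g$ to $\PP^1$ in the latter case). An analogous but more delicate argument, combining Lemma~\ref{l:surj}, the pullback identity $f^{\ast}\pi^{\ast}=\pi^{\ast}g^{\ast}$ of divisors, and multiplicity tracking at both ramified and unramified preimages of $g$, controls the set $\Sigma_1$ of non-reduced fibers (loosely, $\Sigma_1$ is invariant away from the ramification divisor of $g$, which is itself bounded by Riemann--Hurwitz). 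The net effect is an upper bound on $|\Sigma_0\cup\Sigma_1|$ depending only on $\deg g$. On the other hand, Lemma~\ref{lem:embd_in_P1xP1} trivialises the $\A^1$-bundle $X\setminus\pi^{-1}(\Sigma_0\cup\Sigma_1)\to B\setminus(\Sigma_0\cup\Sigma_1)$, so $\pi_1^{\et}(X)$ is a quotient of the orbifold fundamental group of $B$ with orbifold data at $\Sigma_0\cup\Sigma_1$; infinite $\pi_1^{\et}(X)$ with rational $B$ forces $|\Sigma_0\cup\Sigma_1|$ to exceed the above bound, contradicting $\deg g\geq 2$. Hence $\deg g=1$, and since $g\in\Aut(B)$ permutes the finite nonempty set $\Sigma_0\cup\Sigma_1$ (which, by the $\pi_1^{\et}$-hypothesis, is large enough that its stabiliser in $\Aut(\A^1)=\G_a\rtimes\G_m$ or $\Aut(\PP^1)=\PGL_2$ is finite), $g$ has finite order.

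The main obstacle is the descent step: rigorously turning ``$|\pi_1^{\et}(X)|=\infty$'' into an obstruction against a transverse second fibration requires a careful analysis of how profinite \'etale covers of $X$ restrict to general fibers and interact with a hypothetical transverse family of curves, which is essentially a uniqueness statement for the $\A^1$-fibration on $X$ in this setting. The second stage is then a more standard combination of Proposition~\ref{p:bad_fib}, Lemma~\ref{l:surj}, Riemann--Hurwitz, and classical facts about $\Aut(\A^1)$ and $\Aut(\PP^1)$.
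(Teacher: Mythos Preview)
Your two-stage plan matches the paper's, and much of your intuition is correct, but both stages have gaps that the paper closes with one key construction you are missing: the orbifold \'etale cover $X'\to X$.

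\textbf{Descent.} Your heuristic ``a transverse family of $\A^1$'s would trivialise all vertical \'etale covers and hence bound the monodromy'' is the right idea, but as written it is not a proof. The paper makes this concrete as follows. First reduce to $\kk=\C$. Since $|\pi_1(X)|=\infty$ and $B=\A^1$ (say), Nori's lemma forces at least $r\ge 2$ multiple fibres $F_1,\dots,F_r$. Proposition~\ref{p:fat_fib} then gives a finite map $B'\to B$, ramified to order $m_i$ over $\pi(F_i)$, whose normalised pullback $X'\to X$ is \'etale and such that $\pi'\colon X'\to B'$ has no multiple fibres but does have reducible fibres over each $F_i$. Now if $f(F)$ were $\pi$-horizontal, its normalisation would be $\A^1$ and any component $F'\subset X'$ above it would dominate $B'$; hence $\overline\kappa(B')=-\infty$, so $B'\cong\A^1$, and since $\pi'$ has no multiple fibres Nori gives $\pi_1(X')=(1)$, contradicting $|\pi_1(X)|=\infty$. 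This is exactly your ``trivialise along the transverse curve'' idea, but executed on one explicit cover rather than as a vague statement about all covers at once.

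\textbf{Finite order.} Your Riemann--Hurwitz bound $|\Sigma_0|\le 1$ (for $B=\A^1$) from $g^{-1}(\Sigma_0)=\Sigma_0$ is correct. The genuine problem is your treatment of $\Sigma_1$ (equivalently, of multiple fibres). Since $\overline\kappa(B)=-\infty$, the map $g$ need not be \'etale, so Proposition~\ref{p:bad_fib}(2) does not apply and $g^{-1}(\Sigma_1)=\Sigma_1$ can fail: from $f^*\pi^*=\pi^*g^*$ one gets $m_{g(b)}\cdot r_D=e_b\cdot m_b$ along an irreducible fibre component $D$ over $b$, so a multiple fibre over $g(b)$ can pull back to a reduced one over $b$ whenever $e_b$ is large, and conversely a multiple fibre over $b$ can map to a reduced one over $g(b)$. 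Your ``analogous but more delicate argument'' does not survive this. Since infiniteness of $\pi_1(X)$ is governed precisely by the multiple fibres (not by $\Sigma_0\cup\Sigma_1$ per se), this is a real gap, not just missing details.

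The paper sidesteps this entirely by reusing the cover $X'$: one shows (citing \cite[Lemma 3.5]{GZ08}) that $f_*$ is an isomorphism on $\pi_1(X)$, so $f$ lifts to $f'\colon X'\to X'$ compatible with $\pi'$. Now the reducible-fibre locus $\Sigma_0'\subset B'$ has $|\Sigma_0'|\ge 2$ (from the $r\ge 2$ multiple fibres downstairs), and Proposition~\ref{p:bad_fib}(1) gives $(f'|_{B'})^{-1}(\Sigma_0')=\Sigma_0'$ with no \'etaleness needed. Hence $f'|_{B'}$ restricts to an endomorphism of $B'\setminus\Sigma_0'$, which has $\overline\kappa=1$, so $f'|_{B'}$ and therefore $f|_B$ is an automorphism of finite order. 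In short: the paper converts the uncontrollable multiple fibres on $B$ into controllable reducible fibres on $B'$, and your direct Riemann--Hurwitz approach on $B$ does not achieve this.
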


\begin{proof}
	Our proof closely follows \cite[Lemmas~3.5--3.6]{GZ08}.
	We consider the case $B=\A^1$ only since the case $B=\PP^1$ is similar.

	We first prove that the surjective endomorphism $f$ descends to a surjective endomorphism $f|_B$ on $B$.
	By \cref{l:descending}, we only need to show the property $P(\kk)$: $f(F)$ is contained in a fibre of $\pi$ for a general fibre $F$ of $\pi$.
	The property $P(\kk)$ satisfies \eqref{eq:property} of \cref{r:red_2C}.
	So we may assume $\kk=\C$.
	Note that the (topological) fundamental group $\pi_1(X)$ of $X$ is infinite by the assumption that $\vert\pi_1^{\et}(X)\vert=\infty$.

	Let $F_1,\dots,F_r$ be all multiple fibres of $\pi\colon X\to B$ with multiplicity\linebreak $m_i\geq 2$ (cf.~\cref{d:mult_fib}).
	If $r\leq 1$, then $\pi_1(X)$ is finite cyclic of order upper-bounded by the multiplicity of $F_r$
	(cf.~Proof of \cite[Lemma~1.5]{Nor83}, or \cite[Lemma~1.1.11]{GMM21}).
	Thus $r\geq 2$.

	By \cref{p:fat_fib}, there is a finite surjective morphism $B'\to B$ with ramification index equal to $m_i$ at every point of $B'$ lying over the point $\pi(F_i)$ such that the normalisation $X'$ of $X \times_B B'$ is {\'e}tale over $X$, the induced fibration $\pi'\colon X' \to B'$ has no multiple fibre and the fibres of $\pi'$ lying over $F_i$ are all reducible (see also \cref{lem:A1_fib}).

	Suppose the contrary that the property $P(\C)$ is false.
	Then the normalisation of $f(F)$ equals $\A^1$ (because $\overline{\kappa}(f(F)) \leq \overline{\kappa}(F)=\overline{\kappa}(\A^1)=-\infty$), so it is simply connected and dominates $B$ via $\pi\colon X\to B$.
	Take an irreducible component $F'\subseteq X'$ of the inverse of $f(F)$ via the {\'e}tale map $X'\to X$.
	Then $F'$ has normalisation equal to $\A^1$ and dominates $B'$.
	Thus $-\infty=\overline{\kappa}(\A^1)\geq\overline{\kappa}(\pi'(F'))\geq\overline{\kappa}(B')$.
	So $B'\cong\A^1$.
	This and that $\pi'\colon X'\to B'$ has all fibres non-multiple, with the help of \cite[Lemma~1.5]{Nor83}, imply that $\pi_1(X')=(1)$.
	Hence $\pi_1(X)$ is finite too, a contradiction to the assumption.
	Thus $P(\C)$ and hence $P(\kk)$ hold true.
	So $f$ descends to a surjective endomorphism $f|_B$.

	Next we show that $f|_B$ is an automorphism of finite order.
	Clearly, this property also satisfies \eqref{eq:property} of \cref{r:red_2C}.
	So we may assume that $\kk=\C$ in the following.

	Let $\Sigma_0\subseteq X$ (resp.~$\Sigma_0'\subseteq X'$) be the set of points over which the fibres of $\pi$ (resp.~$\pi'$) are reducible.
	By \cref{p:bad_fib}, $(f|_B)^{-1}(\Sigma_0)=\Sigma_0$.
	Iterating $f$, we may assume that $f^{-1}$ stabilises every component in every reducible fibre of $\pi\colon X\to B$.
	Thus $f|_B$ restricts to a surjective endomorphism of $B\setminus\Sigma_0$.

	The argument in \cite[Lemma 3.5]{GZ08} shows that $f$ induces an isomorphism of the group $\pi_1(X)$.
	Thus the covering theory implies that $f$ lifts to a finite surjective morphism $f'\colon X'\to X'$, via the {\'e}tale covering $X'\to X$.

	By the compatibility of $f,\pi$ on $X$ and $f',\pi'$ on $X'$, our $f'$ descends to a surjective morphism $f'|_{B'}\colon B'\to B'$.
	The same \cref{p:bad_fib} implies that $(f'|_{B'})^{-1}(\Sigma_0')=\Sigma_0'$
	and $f'|_{B'}$ restricts to a surjective endomorphism of $B'\setminus\Sigma_0'$.

	Note that $\vert\Sigma_0'\vert\geq 2$ (since the fibres lying over the $r\geq 2$ of fibres $F_i$ are reducible).
	Thus $B'\setminus\Sigma_0'$ is the affine curve $B'$ with at least two points removed, hence $\overline{\kappa}(B'\setminus\Sigma_0')=1$
	(alternatively, since all fibres of the $\A^1$-fibration $\pi'\colon X'\to B'$ are non-multiple,
	$\vert\pi_1(X')\vert=\infty$ and \cite[Lemma~1.5]{Nor83} imply that $\vert\pi_1(B')\vert=\infty$,
	so $\overline{\kappa}(B')\geq 0$ and hence $\overline{\kappa}(B'\setminus\Sigma_0')=1$).
	Therefore, $f'|_{B'\setminus\Sigma_0'}$ and hence $f|_B$ are automorphisms of finite order (cf.~\cref{l:finord}).
\end{proof}

\begin{lemma}\label{lem:cptf}
	Let $\pi_i \colon X_i \to B_i$ be an $\A^1$-bundle from a smooth surface to a smooth curve ($i=1,2$).
	Let $f\colon X_1 \to X_2$ be a finite surjective morphism
	which descends to a surjective morphism $g \colon B_1 \to B_2$.
	Take any $\PP^1$-bundle $\pi_i' \colon V_i \to B_i$ as a partial compactification of $\pi_i$ (which exists by \cref{l:exc}).
	Then $f$ extends to a surjective morphism $f \colon V_1 \to V_2$.
\end{lemma}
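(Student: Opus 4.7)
My strategy is to extend $f$ at each point $p \in S_1 = V_1 \setminus X_1$ by an explicit local computation, exploiting the Zariski-local triviality of the $\PP^1$-bundles $\pi_i'$. On each fibre of $\pi_1$, Lemma~\ref{l:surj} forces $f$ to restrict to a finite surjective morphism $\A^1 \to \A^1$, i.e., a non-constant polynomial map, which extends uniquely to $\PP^1 \to \PP^1$ sending $\infty$ to $\infty$. The local task is then to verify that these fibrewise extensions assemble into a morphism near $p$; the only possible obstruction is a drop in the $x$-degree of $f$ along some special fibre of $\pi_1$, which the flatness of the finite morphism $f$ will rule out.

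\textbf{Local setup.} Fix $p \in S_1$ and write $b_0 = \pi_1'(p)$, $c_0 = g(b_0)$. Choose trivialising affine open neighbourhoods $U \ni b_0$ in $B_1$ and $U' \ni c_0$ in $B_2$ with $g(U) \subseteq U'$, $(\pi_1')^{-1}(U) \simeq U \times \PP^1$, $(\pi_2')^{-1}(U') \simeq U' \times \PP^1$, arranged so that $S_i$ corresponds fibrewise to $\{t = 0\}$ (resp.\ $\{t' = 0\}$). Let $x = s/t$ be the affine fibre coordinate for $\pi_1$. Then $f$ on $\pi_1^{-1}(U)$ takes the form
\[
  (u, x) \longmapsto \bigl(g(u),\, P(u, x)\bigr), \qquad P(u, x) \;=\; \sum_{i=0}^{d} a_i(u)\, x^i \;\in\; \OO(U)[x],
\]
where $d = \deg_x P$ and $a_d \not\equiv 0$. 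Homogenising yields the candidate extension
\[
  (u, [s:t]) \longmapsto \Bigl(g(u),\, \Bigl[\textstyle\sum_{i=0}^{d} a_i(u)\, s^i t^{d-i} \,:\, t^d\Bigr]\Bigr),
\]
which is a morphism on $U \times \PP^1$ if and only if $a_d$ is nowhere zero on $U$: the only potential common zero of the two projective coordinates occurs at $t = 0$ with $a_d(u) = 0$.

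\textbf{Key step: no degree drop.} Suppose, for contradiction, $a_d(u_0) = 0$ for some $u_0 \in U$. Since $f$ is finite between smooth surfaces, miracle flatness makes $f$ flat of constant degree $D = \deg(f)$, and comparing function fields yields $D = d \cdot \deg(g)$, using $[\kappa(B_1)(x):\kappa(B_1)(P)] = d$ and $[\kappa(B_1)(P):g^*\kappa(B_2)(P)] = \deg(g)$. Pick a generic closed point $y = (v_0, y_0) \in X_2$ with $v_0 = g(u_0)$; flatness gives $\operatorname{length} f^{-1}(y) = D$. Locally $f^{-1}(y)$ is defined by $(g(u) - v_0,\, P(u, x) - y_0)$; at each simple root $x_{u,j}$ of $P(u, \cdot) - y_0$, a completion computation shows the local length equals the ramification index $e_u$ of $g$ at $u$, and there are $d_u \coloneqq \deg_x P(u, \cdot)$ such roots. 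Passing to the compactification $\overline{g}\colon \overline{B_1} \to \overline{B_2}$ (finite of degree $\deg(g)$) gives $\sum_{u \in g^{-1}(v_0)} e_u \leq \sum_{u \in \overline{g}^{-1}(v_0)} e_u = \deg(g)$, so
\[
  D \;=\; \sum_{u \in g^{-1}(v_0)} d_u\, e_u \;\leq\; d \sum_{u \in g^{-1}(v_0)} e_u \;\leq\; d \cdot \deg(g) \;=\; D,
\]
forcing equality throughout and in particular $d_{u_0} = d$, contradicting $a_d(u_0) = 0$.

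\textbf{Gluing and surjectivity.} The local morphisms agree with $f$ on the dense open $X_1$, and hence glue into a morphism $f\colon V_1 \to V_2$ extending the given one. From the formula, $f$ sends $S_1 = \{t = 0\}$ into $S_2 = \{t' = 0\}$; under the identifications $\pi_i'|_{S_i}\colon S_i \xrightarrow{\sim} B_i$, the restriction $f|_{S_1}$ is identified with $g$, which is surjective, so $f(S_1) = S_2$. Thus $f(V_1) = X_2 \cup S_2 = V_2$. The main obstacle is the third paragraph: ruling out the degree drop requires simultaneously tracking the $x$-degree of $P(u, \cdot)$ and the $g$-ramification in the local length accounting, and it is only the tight equality $D = d \cdot \deg(g)$ coming from flatness that forces $a_d$ to be nowhere zero.
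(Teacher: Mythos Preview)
Your proof is correct and takes a genuinely different route from the paper's. The paper argues birationally: it extends $f$ to a rational map $V_1\dashrightarrow V_2$, resolves the indeterminacy by a sequence of blow-ups $\mu\colon V_1'\to V_1$ centred on $H_1=V_1\setminus X_1$, and then shows that each $(-1)$-curve $C\subseteq\Exc(\mu)$ is contracted by the resolved morphism $f'=f\circ\mu$. Indeed, finiteness of $f$ gives $(f')^{-1}(X_2)=\mu^{-1}(X_1)$, so $f'(C)\subseteq H_2$; if $f'(C)$ were a curve it would equal $H_2$, but $\pi_2'(f'(C))=g(\pi_1'(\mu(C)))$ is a single point, contradicting $\pi_2'(H_2)=B_2$. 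Thus every exceptional curve may be blown down, and $f$ was already a morphism $V_1\to V_2$.

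Your approach is instead an explicit chart computation, reducing the question to showing that the leading coefficient $a_d$ is a unit, which you settle by a flat fibre-length count and the squeeze $D=\sum_u d_u e_u\le d\sum_u e_u\le d\cdot\deg(g)=D$. This yields extra information along the way (the identity $\deg(f)=d\cdot\deg(g)$ and, from the second equality, that $g$ is itself finite), at the cost of being more computational. The paper's argument is shorter and coordinate-free. One small point to make explicit: your length computation implicitly uses trivialising charts around \emph{every} $u\in g^{-1}(v_0)$, not only the original chart $U\ni u_0$; the bound $d_u\le d$ at those other points still holds because the generic fibre degree $d$ is chart-independent (the transition functions of an $\A^1$-bundle are affine in the fibre coordinate, hence preserve $\deg_x$).
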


\begin{proof}
	Note that $H_i=V_i \setminus X_i$ is an irreducible $\pi_i'$-horizontal curve
	since every fibre of $\pi_i \colon X_i \to B_i$ is $\A^1$.
	Extend $f$ as a rational map $f: V_1 \dasharrow V_2$ with indeterminacy being several points on $H_1$.
	Take a composition of blow ups $\mu \colon V_1' \to V_1$
	such that $f'=f \circ \mu$ is a morphism and $\mu(\Exc(\mu)) \subseteq H_1$.
	Since $f$ descends to $g$, we have $\pi_2 \circ f' = g \circ \pi_1 \circ \mu$.
	Take a $(-1)$-curve $C \subseteq \Exc(\mu)$.
	If $f'(C)$ is not a point, then $f'(C)=H_2$, since $f^{-1}(X_2)=X_1$;
	but then $\pi_2(f'(C))=g(\pi_1(\mu(C)))$ is a point, a contradiction.
	So we may contract $C$, preserving $f'$ as a morphism.
	Continuing this process, $f \colon V_1 \to V_2$ is a (surjective) morphism.
\end{proof}

\begin{lemma}\label{lem:ell_base}
	Let $X$ be a smooth affine surface and $f\colon X\to X$ a finite surjective morphism
	with $\deg(f)\geq 2$.
	Let $\pi\colon X\to B$ be a surjective $\A^1$-fibration.
	Suppose $B$ has an elliptic curve $\overline{B}$ as its compactification.
	Then $(X,f)$ satisfies both AZO~\ref{Conj:AZO} and KSC~\ref{Conj:KSC}.
\end{lemma}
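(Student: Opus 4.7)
The plan is to show that the map $g\colon B\to B$ through which $f$ descends is an automorphism of finite order; both AZO~\ref{Conj:AZO} and KSC~\ref{Conj:KSC} will then follow vacuously.

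First, since $\overline{B}$ is an elliptic curve we have $\overline{\kappa}(B)\geq 0$, so by Lemma~\ref{l:descending} $f$ descends to a surjective morphism $g\colon B\to B$ with $\pi\circ f=g\circ\pi$. Proposition~\ref{p:bad_fib}(3) will yield $g$ of finite order as soon as either $\overline{\kappa}(B)=1$, or $\pi$ has a reducible or non-reduced fibre. The subcase $B\subsetneq\overline{B}$ gives $\overline{\kappa}(B)=1$ directly, so I may assume $B=\overline{B}$ is projective elliptic. I then claim $\pi$ cannot be an $\A^1$-bundle: otherwise Lemma~\ref{lem:A1_fib} lets me take a $\PP^1$-bundle compactification $\overline{\pi}\colon V\to\overline{B}$ whose boundary $D=V\setminus X$ is a single section $s$ of $\overline{\pi}$, and Lemma~\ref{l:D_amp}(1) forces $s$ to be ample on $V$. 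But no section of a $\PP^1$-bundle $V=\PP(\mathcal{E})$ over an elliptic curve can be ample: for decomposable $\mathcal{E}=L_1\oplus L_2$ the two summand sections are disjoint, giving $s\cdot s'=0$; for indecomposable $\mathcal{E}$, Serre duality on $\overline{B}$ combined with Atiyah's classification shows every sub-line-bundle $L\hookrightarrow \mathcal{E}$ satisfies $\deg L\leq \lfloor\deg \mathcal{E}/2\rfloor$, hence $s^2=2\deg L-\deg \mathcal{E}\leq 0$. This contradiction forces $\pi$ to have a reducible or non-reduced fibre, so in every case Proposition~\ref{p:bad_fib}(3) applies and $g$ has some finite order $N$.

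Once $g$ has finite order $N$, the pullback $\pi^*\kk(B)$ lies in $\kk(X)^{f^N}$ and strictly contains $\kk$, so Lemma~\ref{l_inv_fun_field} gives $\kk(X)^f\supsetneq\kk$. Thus the hypothesis of AZO~\ref{Conj:AZO} fails for $(X,f)$ and AZO holds vacuously. For KSC~\ref{Conj:KSC}, since $g^N=\id$ the map $f^N$ preserves fibres of $\pi$, and writing any $j\geq 0$ as $j=kN+i$ with $0\leq i\leq N-1$ gives $\pi(f^j(x))=g^i(\pi(x))$. Hence for any $x\in X(\overline{\Q})$,
\[
O_f(x)\subseteq\bigcup_{i=0}^{N-1}F_{g^i(\pi(x))},
\]
a union of at most $N$ fibres of dimension $1$ in the surface $X$. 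Thus no orbit is Zariski dense, the hypothesis of KSC is never satisfied, and KSC holds vacuously for $(X,f)$.

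The key obstacle is excluding the subcase $B=\overline{B}$ elliptic with $\pi$ an $\A^1$-bundle, which reduces to the classical fact that no section of a $\PP^1$-bundle over an elliptic curve is ample; once this is ruled out, Proposition~\ref{p:bad_fib}(3) takes over and the finite-order mechanism makes both conjectures vacuous.
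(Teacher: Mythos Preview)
Your argument breaks down precisely at the step you flag as ``the key obstacle'': the case $B=\overline{B}$ elliptic with $\pi$ an $\A^1$-bundle \emph{does} occur, and your exclusion of it is incorrect. Sections of a $\PP^1$-bundle over an elliptic curve can very well be ample. In the decomposable case $\mathcal{E}=L_1\oplus L_2$ you only showed that the two \emph{summand} sections are not ample; there are in general other sections. For instance on $\PP(\mathcal{O}_B\oplus\mathcal{O}_B(-p))$ (so $e=1$) an irreducible section numerically equivalent to $C_0+2f$ exists and is ample. In the indecomposable case your self-intersection formula has the wrong sign: if $L\hookrightarrow\mathcal{E}$ is the sub-line-bundle corresponding to $s$, then $s^2=\deg\mathcal{E}-2\deg L\geq 0$, not $\leq 0$; for the Atiyah bundle of odd degree ($e=-1$) the minimal section $C_0$ already satisfies $C_0^2=1$ and is ample (see Hartshorne, Ch.~V, Proposition~2.21). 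In each of these situations $X=V\setminus s$ is a genuine smooth affine $\A^1$-bundle over the elliptic curve, so the case cannot be ruled out.

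The paper treats this remaining case by a different mechanism: one compactifies the $\A^1$-bundle to a $\PP^1$-bundle $V\to B$ and invokes Lemma~\ref{lem:cptf} to extend $f$ to a surjective endomorphism of the projective surface $V$ with $f(X)=X$; then AZO and KSC follow from the known results for projective surfaces recorded in Remark~\ref{r:AZO_KSC_surf}. Your reductions in the other subcases ($B\subsetneq\overline{B}$, or $\pi$ not an $\A^1$-bundle, leading via Lemma~\ref{lem:A1_fib} and Proposition~\ref{p:bad_fib} to $g$ of finite order and a vacuous conclusion) match the paper and are fine.
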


\begin{proof}
	Note that $\overline{\kappa}(B)\geq\overline{\kappa}(\overline{B})=0$.
	By \cref{l:descending}, $f$ descends to a surjective endomorphism $g\colon B\to B$.
	If $B\neq\overline{B}$, then $\overline{\kappa}(B)=1$ and $g$ is of finite order by \cref{l:finord};
	hence, both AZO~\ref{Conj:AZO} and KSC~\ref{Conj:KSC} hold trivially
	(cf.~\cref{l_inv_fun_field,l:iter_inv}).
	Assume $B=\overline{B}$.
	We may assume also that $\pi\colon X\to B$ is an $\A^1$-bundle; otherwise,
	$g$ is of finite order by \cref{lem:A1_fib} and \cref{p:bad_fib}, and we are done again.
	By \cref{lem:cptf}, $X$ embeds into a $\PP^1$-bundle $V$ over $B$
	and $f$ extends to a surjective endomorphism on $V$ with $f(X) = X$.
	The lemma follows from \cref{r:AZO_KSC_surf}.
\end{proof}

\section{Semiabelian varieties: Proof of Theorem~\ref{Thm_semiAb}}

The aim of this section is to prove \cref{Thm_semiAb} the proof of which is similar to the proof of \cite[Theorem 1.14]{Xie19}.
In Subsection~\ref{setup:AdelicTopology} and \cref{rem:intersection_of_finite_adelic_open},
we have briefly recalled some basic properties of the adelic topology and given basic examples of adelic open subsets.
See \cite[Section 3]{Xie19} for the definition and more detailed discussions of adelic topology.
We begin with some notation and lemmas.

\begin{lemma}\label{lemfixdimone}
	Suppose that there is a subvariety $V$ of $G$ such that $\dim V\geq 1$ and $f|_V=\id$.
	Then $\kk(G)^f\neq \kk$.
\end{lemma}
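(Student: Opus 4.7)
The plan is to exploit Iitaka's structure theorem recalled just above this lemma: the dominant endomorphism $f$ of the semiabelian variety $G$ has the form $f = T_a \circ \phi$, where $\phi\colon G \to G$ is a group homomorphism and $T_a$ denotes translation by $a := f(0)$. The whole argument will be organized around the auxiliary group homomorphism $\psi := \phi - \id_G$, and will produce a non-constant $f^{*}$-invariant rational function as the pullback along the quotient homomorphism $q\colon G \to G/\psi(G)$.

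First I would extract two consequences of the hypothesis by fixing any point $v_0 \in V$. The fixed-point equation $\phi(v_0) + a = v_0$ gives $a = -\psi(v_0)$, so $a \in H := \psi(G)$. The same equation applied to an arbitrary $v \in V$ yields $\psi(v - v_0) = \psi(v) - \psi(v_0) = (-a) - (-a) = 0$, so $V - v_0 \subseteq \ker \psi$. Since $\dim V \geq 1$, the kernel of $\psi$ contains a positive-dimensional subvariety, whence $\dim \ker \psi \geq 1$. By the dimension formula for homomorphisms of algebraic groups it then follows that $H = \psi(G)$ is a proper closed subgroup of $G$.

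Next I would consider the quotient map $q\colon G \to G/H$, whose target is a semiabelian variety of positive dimension. The key identity is $f(x) - x = \psi(x) + a$, and both summands on the right lie in $H$: the first by definition of $H$, the second by the first step. Hence $q \circ f = q$, so for any non-constant $\bar{h} \in \kk(G/H)$ the pullback $q^{*}\bar h$ lies in $\kk(G)^{f} \setminus \kk$, which is what we want.

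The two halves of the hypothesis are used in genuinely different ways: the mere existence of \emph{some} fixed point in $V$ is what forces $a \in H$, while the positive \emph{dimension} of $V$ is what forces $H \subsetneq G$. Both inputs are essential. Once Iitaka's theorem is invoked, the remainder is group-theoretic bookkeeping on $G$ and its quotient, and I do not anticipate a serious obstacle; if anything, the subtlest point is simply remembering to keep these two uses of the hypothesis separate.
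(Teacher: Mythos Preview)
Your proof is correct and takes a genuinely different, more elementary route than the paper's. After the common reduction $f = T_a \circ \phi$, you pass to the \emph{quotient} $q\colon G \to G/H$ with $H = (\phi - \id)(G)$ and check $q\circ f = q$; the paper instead reduces to $0\in V$ (so $f=\phi$ is an isogeny), invokes the \emph{minimal polynomial} $m(t)=(1-t)^rP(t)$ of $f$, sets $N\coloneqq(\id-f)^{r-1}P(f)$, and uses the surjection $N\colon G\to B\coloneqq N(G)$ together with $N\circ f = N$. Both produce a surjection from $G$ to a positive-dimensional target through which $f$ becomes the identity, but your construction avoids the minimal-polynomial machinery entirely (in particular, the fact that an isogeny of a semiabelian variety satisfies a monic polynomial). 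The only points worth making explicit in your write-up are that $\psi(G)$ is automatically a \emph{closed connected} subgroup (image of a homomorphism from a connected algebraic group), so that the quotient $G/H$ exists as a positive-dimensional variety, and that $\dim\ker\psi \ge 1$ indeed forces $\dim H < \dim G$ via the dimension formula.
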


\begin{proof}
	We may assume that $0\in V$.
	Then $f$ is an isogeny.
	We have $V\subseteq\Ker(f-\id)$.
	So $\dim\Ker(f-\id)\geq 1$.
	Write the minimal polynomial of $f$, killing $f$ on $G$ (or equivalently $f_{\C}$ on $G_{\C}$), as $(1-t)^rP(t)$ where $P(1)\neq 0$.
	We have $r\geq 1$.
	Set $N\coloneqq(\id-f)^{r-1}P(f)$ and $B=N(G)$.
	Then $\dim B\geq 1$ and $B\subseteq\Ker(f-\id)$.
	Further, $f$ descends to $f|_B=\id\colon B\to B$ via $N$.
	Pick a nonconstant rational function $F$ on $B$.
	Set $H\coloneqq N^*F=F\circ N$, which is a nonconstant rational function on $G$.
	We conclude the proof with:
	\[
		f^*H=F\circ N\circ f=F\circ(f|_B)\circ N=F\circ \id \circ N=H.\qedhere
	\]
\end{proof}

For every closed subset $V \subseteq G$, define
\[
	S_V\coloneqq \{a\in G\mid a+V= V\}.
\]
Then $S_V$ is a group subvariety of $G$.
Denote by $S_V^0$ the identity component of $S_V$.
Then $S_V^0\subseteq G$ is a semiabelian subvariety.

Assume that $V$ is irreducible and invariant under $f$.
Set $B\coloneqq G/S_V^{0}$ and denote by $\pi\colon G\to B$ the quotient morphism.
There is a unique endomorphism $f|_B\colon B\to B$ such that $f|_B\circ \pi=\pi\circ f$.
Since $f$ is dominant, $f|_B$ is also dominant.
Set $V_B\coloneqq\pi(V)$.
By \cite[Theorem 3]{Abr94}, $\overline{\kappa}(V_B)=\dim V_B$.
By \cref{l:finord}, there is some $\ell>0$ such that $(f^{\ell}|_B)|_{V_B}=\id$.
Observe that if $\dim V_B=0$, $V$ takes the form $a+S_V^0$ for some $a\in V$.

\begin{lemma}\label{lemwbpodim}
	Assume that $\dim V_B\geq 1$.
	Then $\kk(G)^f\neq\kk$.
\end{lemma}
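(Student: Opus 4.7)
The plan is to apply the preceding Lemma~\ref{lemfixdimone} not to $(G,f)$ directly, but to the induced data on the quotient $B = G/S_V^{0}$, and then pull the resulting invariant rational function back to $G$ via $\pi^{*}$. The only input supplied by the new hypothesis $\dim V_B\geq 1$ is the existence of a positive-dimensional subvariety of $B$ fixed pointwise by some iterate of $f|_B$, which is exactly the scenario that Lemma~\ref{lemfixdimone} is designed to exploit.

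Concretely, I would first observe that $B = G/S_V^{0}$ is itself a semiabelian variety, being the quotient of a semiabelian variety by the connected algebraic subgroup $S_V^{0}$, and that, as already recorded in the paragraph preceding the statement, $f|_B\colon B\to B$ is a well-defined dominant endomorphism. The image $V_B = \pi(V)$ is irreducible (being the image of the irreducible $V$), has positive dimension by assumption, and satisfies $(f^{\ell}|_B)|_{V_B} = \id$ for some $\ell\geq 1$. Applying Lemma~\ref{lemfixdimone} to the triple $(B,\, f^{\ell}|_B,\, V_B)$ therefore yields a nonconstant rational function $F\in\kk(B)^{f^{\ell}|_B}$.

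Next I would set $H\coloneqq\pi^{*}F = F\circ\pi \in \kk(G)$. Since $\pi$ is surjective, $H$ is nonconstant; and iterating $\pi\circ f = f|_B\circ\pi$ gives $\pi\circ f^{\ell} = f^{\ell}|_B\circ\pi$, so
\[
	(f^{\ell})^{*}H \;=\; F\circ f^{\ell}|_B\circ\pi \;=\; F\circ\pi \;=\; H.
\]
Hence $\kk(G)^{f^{\ell}}\neq\kk$, and the equivalence $(1)\Leftrightarrow(2)$ of Lemma~\ref{l_inv_fun_field} promotes this to $\kk(G)^{f}\neq\kk$, as desired.

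There is no serious obstacle here: the scheme is a transparent \emph{descend to the quotient, apply the previous lemma, pull back}. The single point that requires care -- and the very reason Lemma~\ref{l_inv_fun_field} is placed earlier as a preparation -- is that the invariance on $B$ is only under the iterate $f^{\ell}|_B$, so $H$ is initially only $f^{\ell}$-invariant on $G$; the iterate-invariance of the non-triviality of the fixed field dispenses with this last gap.
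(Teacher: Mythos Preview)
Your proof is correct and follows essentially the same approach as the paper: descend to $B$, apply Lemma~\ref{lemfixdimone} there, and pull back via $\pi^{*}$. The only cosmetic difference is ordering---the paper invokes Lemma~\ref{l_inv_fun_field} at the outset to replace $f$ by $f^{\ell}$, whereas you apply it at the end to pass from $f^{\ell}$-invariance to $f$-invariance; the content is identical.
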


\begin{proof}
	By \cref{l_inv_fun_field}, replacing $f$ by $f^{\ell}$, we may assume that $(f|_B)|_{V_B}=\id$.
	Since $\pi$ is surjective,
	we only need to show that there is a nonconstant rational function $H$ on $B$
	satisfying $(f|_B)^*H=H$.
	This is achieved by applying \cref{lemfixdimone} to $f|_B$, $B$ and $V_B$.
\end{proof}

\begin{lemma}\label{lemnoinvrtransub}
	Suppose that $\kk(G)^f=\kk$.
	Then:
	\begin{enumerate}[leftmargin=2em]
		\item Every irreducible $f$-invariant subvariety $V$ takes the form $a+G_0$
		      where $a\in G$ and $G_0$ is a semiabelian subvariety of $G$.
		\item Suppose further $f$ is an isogeny.
		      Then the fixed point set $\Fix(f)$ of $f$ is finite and $V\cap\Fix(f)\neq\emptyset$.
	\end{enumerate}
\end{lemma}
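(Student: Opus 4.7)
My plan is to derive (1) as an essentially immediate consequence of Lemma~\ref{lemwbpodim}, and then use (1) together with Lemma~\ref{lemfixdimone} to establish both assertions in (2).

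For (1), I would take $V$ irreducible and $f$-invariant, form the quotient $\pi\colon G\to B=G/S_V^0$, and consider $V_B=\pi(V)$. Lemma~\ref{lemwbpodim} contrapositively forces $\dim V_B=0$ under the hypothesis $\kk(G)^f=\kk$, and the paragraph preceding that lemma then shows $V=a+S_V^0$ for some $a\in V$. Taking $G_0\coloneqq S_V^0$ (which is by definition a semiabelian subvariety of $G$) supplies the required description.

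For the finiteness of $\Fix(f)$ in (2), since $f$ is now an isogeny it is in particular a group homomorphism, so $\Fix(f)=\Ker(f-\id)$ is a closed algebraic subgroup of $G$. If its dimension were positive, its identity component would be a positive-dimensional subvariety on which $f$ restricts to the identity, directly contradicting Lemma~\ref{lemfixdimone} under our standing hypothesis $\kk(G)^f=\kk$.

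For the nonemptiness $V\cap\Fix(f)\neq\varnothing$, I would use (1) to write $V=a+G_0$. Since $f$ is finite surjective and $V$ is irreducible and $f$-invariant, $f(V)=V$; combined with $f$ being a homomorphism this yields the coset identity $f(a)+f(G_0)=a+G_0$, whence $f(G_0)$ is a semiabelian subvariety (hence contains $0$) equal to a translate of $G_0$, forcing $f(G_0)=G_0$ and $a-f(a)\in G_0$. The crux is then to observe that $(f-\id)|_{G_0}\colon G_0\to G_0$ has kernel $G_0\cap\Fix(f)$, which is finite by the previous paragraph, so it is an isogeny of the semiabelian variety $G_0$ and therefore surjective; choosing $b\in G_0$ with $(f-\id)(b)=a-f(a)$ produces the fixed point $a+b\in V\cap\Fix(f)$. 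The main conceptual step is this last observation that $(f-\id)|_{G_0}$ is itself an isogeny of $G_0$: once it is recognized as a group homomorphism with finite kernel on a semiabelian variety of the same dimension as its image, surjectivity follows and the fixed point inside $V$ is constructed directly.
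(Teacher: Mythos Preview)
Your proposal is correct and follows essentially the same route as the paper's proof: part (1) is obtained by applying Lemma~\ref{lemwbpodim} contrapositively to force $\dim V_B=0$, and part (2) combines Lemma~\ref{lemfixdimone} (for finiteness of $\Fix(f)$) with the observation that $(f-\id)|_{G_0}$ is an isogeny to produce a fixed point of the form $a+b\in V$. The only cosmetic difference is that you deduce $(f-\id)|_{G_0}$ is an isogeny from its kernel being contained in the already-proven-finite $\Fix(f)$, whereas the paper cites Lemma~\ref{lemfixdimone} directly for this; the content is identical.
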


\begin{proof}
	By \cref{lemwbpodim}, $V = a+G_0$, with $a\in G$ and $G_0$ a semiabelian subvariety of $G$.

	Now assume that $f$ is an isogeny.
	Since $f(V)=V$, $f(a)+f(G_0)=a+G_0$.
	It follows that $f(G_0)=G_0$ and $f(a)-a\in G_0$.
	By \cref{lemfixdimone}, $\Fix(f)$ is finite and $(f-\id)|_{G_0}$ is an isogeny.
	So there is some $x\in G_0$ such that $f(x)-x=a-f(a)$.
	It follows that $f(a+x)=a+x$ and $a+x\in a+G_0=V$.
	This concludes the proof.
\end{proof}

Now we are ready for:

\begin{proof}[Proof of \cref{Thm_semiAb}]
	We may assume that $\kk(G)^f=\kk$.
	Also, we may replace $f$ by an iteration (cf.~\cref{l_inv_fun_field,l:iter_inv}).
	Let $K$ be a subfield of $\kk$ which is finitely generated over $\Q$ satisfying $\overline{K}=\kk$,
	such that $G$ and $f$ are defined over $K$.
	There exists a semiabelian variety $G_K$ over $K$ and an endomorphism $f_K\colon G_K\to G_K$ such that $G=G_K\times_{\Spec K}\Spec\kk$ and $f=f_K\times_{\Spec K}\id$.

	\medskip

	We first treat the case where $f$ is an isogeny.
	Denote by $G[2]$ the finite subgroup of the $2$-torsion points in $G$.
	By \cref{lemfixdimone}, $\Fix(f)$ is finite.
	After replacing $K$ by a finite extension,
	we may assume that all points in $\Fix(f)\cup G[2]$ are defined over $K$.

	By abusing notation,
	we will use addition to denote the group operation on the semiabelian variety $G$,
	i.e., regard $G$ as an additive group.
	For every $\ell\in\Z$, denote by $[\ell]\colon G\to G$ the morphism $x\mapsto\ell x$.
	Since $[3]\circ f=f\circ [3]$, we have $[3](\Fix(f))\subseteq\Fix(f)$.
	There is some $m\geq 1$ such that for every $x\in\Fix(f)$, we have $[3^{2m}](x)=[3^m](x)$.

	By \cite[Proposition 3.24]{Xie19}, replacing $f$ by a positive iteration,
	there is a nonempty adelic open subset $P$ of $G(\kk)$ such that for every point $z\in P$,
	the orbit closure $Z_f(z)\coloneqq\overline{O_f(z)}$ is irreducible.
	Then we have $f(Z_f(z))=Z_f(z)$.
	By \cref{lemnoinvrtransub},
	$Z_f(z)$ takes the form $a+H$ where $H$ is a semiablian subvariety of $G$ and $a\in\Fix(f)$.

	Our $G$ fits the following exact sequence
	\[
		1\longrightarrow\G_m^r\longrightarrow G\longrightarrow A\longrightarrow 1,
	\]
	where $A$ is an abelian variety of dimension $s$.
	We have $|G[2]|=2^{2s+r}$.
	Moreover, for every semiabelian subvariety $H'$ of $G$, we have
	\begin{equation*}\label{eqsemiabl}
		\tag{$\ast$}
		\vert G[2]\cap H'\vert=\vert H'[2]\vert\leq \vert G[2]\vert
	\end{equation*}
	and the last equality holds if and only if $H'=G$.

	Pick an embedding $\tau\colon K\hookrightarrow\C_3$.
	We note that $0\in G_K(\C_3)$ is an attracting fixed point for $[3]$.
	There exists an open neighbourhood $U\subseteq G_K(\C_3)$ of $0$ such that for every $x\in U$,
	$\lim_{n\to\infty}[3^n]x=0$.
	Let $P$ be an adelic open subset of $G(\kk)$.
	Then
	\[
		C\coloneqq P\cap (\cap_{y\in G[2]}G_K(\tau,y+U))
	\]
	is a nonempty basic adelic open subset of $G(\kk)$
	(cf.~\cref{rem:intersection_of_finite_adelic_open}).

	We only need to show that for every $x\in C$, $Z_f(x)=G$.
	Denote by $I_{\tau}$ the set of field embeddings $j\colon\kk\hookrightarrow\C_3$ with $j|_K=\tau$.
	For $j\in I_{\tau}$,
	denote by $\phi_j\colon G(\kk)\hookrightarrow G_K(\C_3)$
	the embedding induced by $j\colon \kk\hookrightarrow\C_3$.
	For every $y\in G[2]$, there is some $j_y\in I_{\tau}$ such that $a_y\coloneqq \phi_{j_y}(x)\in y+U$.
	As remarked early on,
	we can write $Z_f(x)=a+H$ with $a\in\Fix(f)$ and $H$ a semiablian subvariety of $G$.
	Then for $n\geq 1$, we have $[3^{nm}](Z_f(x))=[3^{nm}](a+H)=[3^m](a)+H$.

	We note that $a_y=\phi_{j_y}(x)\in \phi_{j_y}(Z_f(x))$ for every $y\in G[2]$.
	Then for every $n\geq 0$, $y\in G[2]$, we have
	\[
		y+[3^{nm}](a_y-y)=[3^{nm}](a_y)\in \phi_{j_y}([3^m](a)+H).
	\]
	Since $a_y-y\in U$, letting $n\to\infty$, we get $y\in\phi_{j_y}([3^m](a)+H)$.
	Since $y$ is defined over $K$, we have $y\in [3^m](a)+H$.
	It follows that $G[2]\subseteq [3^m](a)+H$.
	In particular, $0\in [3^m](a)+H$, so $H=[3^m](a)+H\supseteq G[2]$.
	Thus, $H[2]=G[2]$.
	Hence $H=G$ as we remarked after the display \eqref{eqsemiabl} early on.
	It follows that $Z_f(x)=G$.

	\medskip

	Now we treat the general case.
	Let $V$ be a subvariety of $G$ which has minimal dimension in all $f$-periodic subvarieties.
	By \cref{lemnoinvrtransub}, $V$ is a translate of a semiabelian subvariety of $G$.
	After changing the origin of $G$ and replacing $f$ by a suitable iterate,
	we may assume that $V$ itself is a semiabelian subvariety of $G$ and $f(V)=V$.
	Set $B\coloneqq G/V$ and denote by $\pi\colon G\to B$ the quotient morphism.
	There is an endomorphism $f|_B\colon B\to B$ such that $f|_B\circ \pi=\pi\circ f$.
	Since $f(V)=V$ and $f$ is dominant, $f|_B$ is an isogeny.
	Then, by the completed isogeny case,
	there is a nonempty adelic open subset $D$ of $B(\kk)$ such that for every $x\in D$,
	the orbit $O_{f|_B}(x)$ is Zariski dense in $B$.
	By \cite[Proposition 3.24]{Xie19}, after replacing $f$ by a positive iteration,
	there is a nonempty adelic open subset $P$ of $G(\kk)$ such that for every point $z\in P$,
	its orbit closure $Z_f(z)$ is irreducible.

	We claim that for every $x\in \pi^{-1}(D)\cap P$, we have $Z_f(x)=G$.
	Indeed, since $O_{f|_B}(\pi(x))$ is Zariski dense in $B$, we have $\pi(Z_f(x))=B$.
	So $Z_f(x)\cap V\neq \emptyset$.
	Since $f(Z_f(x)\cap V)\subseteq Z_f(x)\cap V$,
	there is an $f$-periodic subvariety contained in $Z_f(x)\cap V$.
	The minimality of $\dim V$ implies that $V\subseteq Z_f(x)$.
	By \cref{lemnoinvrtransub} and noting that $V$ is an semiabelian subvariety,
	$Z_f(x)$ is a semiabelian subvariety of $G$.
	Then $Z_f(x)$ contains a fibre $V$ of $\pi\colon G\to B$ and dominates $B$, hence is equal to $G$.
\end{proof}

\section[Case of log Kodaira dimension
  \texorpdfstring{$\geq 0$}{>= 0}:\\\mbox{} Proofs of Theorems~\ref{Thm_k>0_str} and \ref{Thm_k>0_conj}]{Case of log Kodaira dimension
  \texorpdfstring{$\geq 0$}{>= 0}: Proofs of Theorems~\ref{Thm_k>0_str} and \ref{Thm_k>0_conj}}

This section deals with smooth affine surfaces of non-negative log Kodaira dimensions.

\begin{proposition}\label{p:kappa=1_per}
	Let $X$ be a smooth affine surface
	and $f\colon X\to X$ a finite surjective morphism of degree $\geq 2$.
	Suppose $\overline{\kappa}(X)=1$.
	Then we have:
	\begin{enumerate}[leftmargin=2em]
		\item There is a surjective morphism $\pi\colon X\to B$ to a smooth curve $B$
		      such that $f$ descends to an automorphism $f|_B\colon B\to B$ of finite order.
		\item There exists a finite surjective morphism $B''\to B$ from a smooth curve
		      such that the normalisation $X''$ of $X \times_{B} B''$ is an {\'e}tale cover of $X$
		      and the induced morphism $\pi''\colon X''\to B''$ is a trivial $\G_m$-bundle.
		      Moreover, after iteration, $f$ lifts to an endomorphism $f''$ on $X''$
		      such that\linebreak $\pi''\circ f''=\pi''$.
	\end{enumerate}
\end{proposition}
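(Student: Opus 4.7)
The plan is, from $\overline{\kappa}(X)=1$, to take a log smooth compactification $(V,D)$ of $X$ and study the log Iitaka fibration of $K_V+D$: after blowups above $D$ this is a morphism $\overline{\pi}\colon V \to \overline{B}$ with connected fibres onto a smooth projective curve. Let $B\subseteq\overline{B}$ be the largest open on which $\overline{\pi}$ restricts to a surjection $\pi\colon X \to B$. A general fibre of $\pi$ is a smooth closed curve in the affine $X$ with log Kodaira dimension zero, hence isomorphic to $\G_m$. By Lemma~\ref{lem:k>0_et}, $f$ is étale, so it preserves $K_V+D$ on a common model, and by the intrinsic nature of the log Iitaka fibration (or Lemma~\ref{l:descending} after passing to the orbifold base), descends to an endomorphism $f|_B\colon B\to B$.

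To finish (1), I will show $f|_B$ has finite order. The log canonical bundle formula gives
\[
    K_V + D \equiv \overline{\pi}^{*}\bigl(K_{\overline{B}} + \Delta\bigr), \qquad \Delta = (\overline{B}\setminus B) + \sum_{i=1}^{r}\bigl(1-\tfrac{1}{m_i}\bigr)\pi(F_i),
\]
where the $F_i$ are the multiple fibres of $\pi$ with multiplicities $m_i$, and $\deg(K_{\overline B}+\Delta)>0$ since $\overline{\kappa}(X)=1$. By Proposition~\ref{p:bad_fib}, $f|_B$ preserves $\{\pi(F_i)\}$ and stabilises $\overline B\setminus B$, so it restricts to an étale self-map of $B_0\coloneqq B\setminus\{\pi(F_1),\dots,\pi(F_r)\}$; since $\deg(K_{\overline B}+(\overline B\setminus B)+\sum_i \pi(F_i))\geq\deg(K_{\overline B}+\Delta)>0$, we have $\overline{\kappa}(B_0)=1$, and Lemma~\ref{l:finord} forces $f|_{B_0}$, hence $f|_B$, to be an automorphism of finite order.

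For (2), I apply Proposition~\ref{p:fat_fib} to $\pi\colon X\to B$; its hypothesis $(\star)$ always holds here (if $B=\PP^1$ then positivity of $K_{\overline B}+\Delta$ forces $r\geq 3$; if $\overline B$ has positive genus then $B$ is irrational; otherwise $B$ is affine), yielding a finite $B''_1 \to B$ ramified of index $m_i$ over $\pi(F_i)$ such that the normalisation $X''_1$ of $X\times_B B''_1$ is étale over $X$ and $\pi''_1\colon X''_1\to B''_1$ is a $\G_m$-bundle. A further degree-$\leq 2$ étale cover $B''_2 \to B''_1$ from Lemma~\ref{l:smooth_K*}(1) makes $\pi''_2\colon X''_2\to B''_2$ untwisted, so $X''_2\cong \Spec_{B''_2}\bigl(\bigoplus_{n\in\Z}\mathcal L^n\bigr)$ for some line bundle $\mathcal L$ on $B''_2$. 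After replacing $f$ by an iterate with $f|_{B''_2}=\mathrm{id}$, $f$ lifts along the étale cover $X''_2\to X$ to $f''_2\colon X''_2\to X''_2$ satisfying $\pi''_2\circ f''_2=\pi''_2$.

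The hard part is trivialising $\mathcal L$. On each $\G_m$-fibre, $f''_2$ must have the form $z\mapsto c_b z^m$ with $|m|=\deg f''_2\geq 2$; globally it corresponds to a graded endomorphism of $\bigoplus_n \mathcal L^n$ of degree $m$, equivalently to a global section of $\mathcal L^{m-1}$, and this section must be nowhere vanishing in order that $f''_2$ be surjective on each fibre. Hence $\mathcal L^{m-1}\cong\mathcal O_{B''_2}$, so $\mathcal L$ is torsion; the associated étale cyclic cover $B''\to B''_2$ of degree dividing $|m-1|$ trivialises $\mathcal L$, whence $X''\coloneqq X''_2\times_{B''_2} B''\cong \G_m\times B''$, and after a further iterate of $f$ (to absorb the Galois action of $B''\to B''_2$), $f''_2$ lifts to $f''\colon X''\to X''$ with $\pi''\circ f''=\pi''$, completing (2). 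The main obstacle is precisely the torsion identity $\mathcal L^{m-1}\cong\mathcal O_{B''_2}$: an untwisted $\G_m$-bundle need not become trivial after any finite étale cover in general, but the existence of a fibre-preserving $f''_2$ of fibre-degree $\geq 2$ is exactly what supplies the torsion and delivers the trivialisation.
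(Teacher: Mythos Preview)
Your outline follows the same overall architecture as the paper (Iitaka fibration $\Rightarrow$ finite-order base map $\Rightarrow$ kill multiple fibres $\Rightarrow$ untwist $\Rightarrow$ trivialise), and your torsion argument for the line bundle $\mathcal L$ at the end is a genuinely nice replacement for the paper's citation of \cite[Claim~3.2a]{GZ08}: the observation that a fibre-preserving endomorphism of degree $|m|\ge 2$ on an untwisted $\G_m$-bundle forces $\mathcal L^{\,m-1}\cong\mathcal O$, hence $\mathcal L$ torsion, is exactly the right mechanism and is more transparent than the black-box reference.

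There is, however, a real gap earlier that breaks the chain.  You never establish that \emph{every} fibre of $\pi\colon X\to B$ has support isomorphic to $\G_m$; you only know this for the general fibre.  The paper obtains this from $\deg(f)\ge 2$ together with Lemma~\ref{lem:k>0_et} (so $f$ is \'etale and $e(X)=0$ by Lemma~\ref{l:euler}) and the Suzuki formula.  Without it, two of your steps fail.  First, your canonical bundle formula $K_V+D\equiv\overline\pi^*(K_{\overline B}+\Delta)$ with $\Delta=(\overline B\setminus B)+\sum_i(1-\tfrac1{m_i})\,\pi(F_i)$ is only correct for a $\G_m$-fibration whose special fibres are irreducible; in general the discriminant part also picks up contributions from reducible (non-multiple) fibres, and your appeal to Proposition~\ref{p:bad_fib} for invariance of $\{\pi(F_i)\}$ conflates ``multiple'' with ``non-reduced irreducible''.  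Second, and more seriously, when you invoke Proposition~\ref{p:fat_fib} to produce $X''_1\to B''_1$, part~(1) only yields an \'etale cover with no multiple fibre; to conclude that $\pi''_1$ is a $\G_m$-\emph{bundle} you need part~(2), whose hypothesis is precisely that every fibre of $\pi$ has support $\G_m$.  If some fibre of $\pi$ were reducible or had a different support, $\pi''_1$ would not be a $\G_m$-bundle, Lemma~\ref{l:smooth_K*} would not apply, and you would never reach the situation where your $\mathcal L$-torsion argument makes sense.  Inserting the Euler-characteristic/Suzuki step fixes all of this at once; as written, the proposal does not go through.
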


\begin{proof}
	Take a log smooth compactification $(V,D)$ of $X$.
	Then $\kappa(V,K_V+D)=\overline{\kappa}(X)=1$.
	Hence $K_V+D$ has a Zariski decomposition $K_V+D=P+N$.
	By \cref{p:nef_free}, the nef part $P$ is semi-ample.

	Let $\Gamma$ be a desingularisation of the graph of the rational map
	$f\colon V_1=V\dashrightarrow V=V_2$ (extending $f\colon X\to X$)
	with two projections $p_i\colon\Gamma\to V_i$.
	Since $f^{-1}(X)=X$, we have $p_1^{-1}(D)=p_2^{-1}(D)\;(\eqqcolon D_{\Gamma}$).
	By the log ramification divisor formula,
	$K_{\Gamma}+D_{\Gamma}=p_i^*(K_V+D)+E_i$ for some effective divisor $E_i$ for $i=1,2$
	(cf.~\cite[Theorem 11.5]{Iit82}).
	Now the following composition self-map $f^*$ is an injective linear transformation
	\begin{equation*}\label{eqinjhom}
		\tag{$\dagger$}
		f^*\colon H^0(V_2,s(K_V+D))\to H^0(\Gamma,s(K_{\Gamma}+D_{\Gamma}))\cong H^0(V_1,s(K_V+D))
	\end{equation*}
	and hence an isomorphism, for any $s\geq 1$.

	Take $s$ sufficiently large and divisible.
	Then $\Phi_{|s(K_V+D)|}=\Phi_{|sP|}$ as rational maps,
	where the latter is a well-defined morphism (the Iitaka fibration of $X$) with connected fibres.
	Since the $f^*$ in the \eqref{eqinjhom} above is an isomorphism,
	$f\colon X\to X$ descends to an automorphism $f|_{B_m}$ on the base $B_m$
	of the Iitaka fibration $\Phi_{|sP|}\colon V\to B_m$.
	Note that $\dim B_m=\overline{\kappa}(X)=1$.

	Let $\pi=(\Phi_{|sP|})|_X\colon X\to B\coloneqq\pi(X)$.
	Then $f|_{B_m}$ restricts to (an automorphism) $f|_B$.
	Taking normalisation, we may assume that $B$ is smooth.
	Then $\overline{\kappa}(F)=0$ for a general fibre $F$ of $\pi$,
	by the definition of the Iitaka fibration (cf.~\cite{Iit82}).
	Now $F$ is a smooth affine curve with $\overline{\kappa}(F)=0$,
	so $F \cong \G_m$ i.e., $\pi$ is a $\G_m$-fibration.
	By \cref{l:euler}, $e(X)=0$.
	By the Suzuki formula (cf.~\cite{Suz77}, \cite{Gur97}), every fibre of $\pi$ has support $\G_m$.

	Let $\{m_iF_i\}_{i=1}^r$ ($r\geq 0$) be the set of multiple fibres $m_iF_i$ of $X\to B$
	lying over a point $b_i$.
	Set $B_0=B\setminus\{b_1,\dots,b_r\}$, $X_0=\pi^{-1}(B_0)$.
	Then $f|_{B_0}$ is induced by \cref{p:bad_fib}.
	By \cref{l:smooth_K*}, $\overline{\kappa}(B_0)=\overline{\kappa}(X_0)\geq\overline{\kappa}(X)=1$,
	so $\overline{\kappa}(B_0)=1$.
	Therefore $f|_{B_0}$ (and hence $f|_B$) are automorphisms of finite order, say the identity, after iterating $f$ (cf.~\cref{l:finord}).
	Moreover, either $B$ is irrational or $r\geq 3$.
	By \cref{p:fat_fib}, there is a finite surjective morphism $B'\to B$, from smooth $B'$, such that the normalisation $X'$ of $X \times_B B'$ is an {\'e}tale cover of $X$ with the projection $\pi'\colon X'\to B'$ being a $\G_m$-bundle.

	Now $f$ lifts to an endomorphism $f'$ on $X'$ such that $\pi'\circ f'=\pi'$.
	Applying \cite[Claim 3.2a]{GZ08}, there is a finite {\'e}tale cover $B''\to B'$ such that $\pi''\colon X''=X'\times_{B'} B''\to B''$ is a trivial $\G_m$-bundle.
	Clearly, $f'$ lifts to an endomorphism $f''$ on $X''$ such that $\pi''\circ f''=\pi''$.
\end{proof}

\begin{proposition}\label{p:torus_lift}
	Let $X$ be a smooth affine surface over $\kk$, and $f\colon X\to X$ a finite surjective morphism of degree $\geq 2$.
	Suppose $\overline{\kappa}(X)=0$.
	Then there is a finite {\'e}tale cover $\pi_T\colon T\to X$ from an algebraic torus $T\cong\G_m^2$ and $f$ lifts to an endomorphism $f_T$ on $T$.
\end{proposition}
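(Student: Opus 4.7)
The plan is to first establish that $X$ is a $Q$-algebraic torus (in the sense of \ref{NT}), and then to produce the lift $f_T$ by invoking the universal property of the algebraic torus closure constructed in Lemma~\ref{lem:torus-cls}.

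For the structural step, I would begin with the two constraints that the hypotheses immediately impose. Since $\okappa(X)\geq 0$, Lemma~\ref{lem:k>0_et} shows $f$ is étale and surjective; combined with $\deg(f)\geq 2$, Lemma~\ref{l:euler} yields the Euler-number vanishing $e(X)=0$. Fix a log smooth compactification $(V,D)$ of $X$. Because $\okappa(X)=0$, the divisor $K_V+D$ is pseudo-effective of Iitaka dimension zero, so Proposition~\ref{p:nef_free} applied to its Zariski decomposition $K_V+D=P+N$ makes $P$ semi-ample; the vanishing of its Iitaka dimension then forces $P\equiv 0$, and $K_V+D\sim_{\Q} N$ with negative-definite support. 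At this point one invokes the classification of smooth affine surfaces with $\okappa=0$ (Fujita, Kawamata, Miyanishi--Sugie, as used in the main argument of \cite{GZ08}): such a surface is either a $\Q$-homology plane (in which case $e(X)=1$ since $H_*(X;\Q)=H_*(\mathrm{pt};\Q)$) or a $Q$-algebraic torus. The hypothesis $e(X)=0$ excludes the first alternative, and hence $X$ is a $Q$-algebraic torus.

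Granted the structural result, the lift is now essentially formal. Apply Lemma~\ref{lem:torus-cls} to obtain the algebraic torus closure $\pi_T\colon T\to X$: it is a finite étale Galois cover with $T$ an algebraic torus. Since $\pi_T$ is finite étale and $\dim X=2$, the connected smooth torus $T$ has dimension $2$, so $T\cong\G_m^2$. Now the composition $f\circ\pi_T\colon T\to X$ is a finite étale cover of $X$ whose source is already an algebraic torus. By the universal property Lemma~\ref{lem:torus-cls}(2), there exists an étale morphism $f_T\colon T\to T$ with $\pi_T\circ f_T=f\circ\pi_T$. As an étale morphism of positive degree between connected smooth varieties, $f_T$ is a finite surjective endomorphism of $T$, and the diagram
\[
\xymatrix{
T\ar[r]^{f_T}\ar[d]_{\pi_T} & T\ar[d]^{\pi_T}\\
X\ar[r]_{f} & X
}
\]
commutes, which is exactly the desired lift.

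The main obstacle is squarely in Step 1: ruling out the $\Q$-homology-plane alternative and identifying the toric structure. All the combinatorial work on $(V,D)$ and its log minimal model, together with the analysis of the boundary $D$ (a tree versus a cycle/wheel of rational curves), lives there; the Euler-number obstruction $e(X)=0$ is what ultimately forces the toric case and thus turns the classification into exactly the statement we need. Once Step~1 is in hand, Step~2 uses only formal properties of the torus closure and gives both the cover and the lift with no further surface-geometric input.
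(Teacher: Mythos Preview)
Your Step~2 (lifting $f$ through the algebraic torus closure via Lemma~\ref{lem:torus-cls}) is exactly what the paper does, and it works as you describe.

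Step~1 is where you diverge. The paper does \emph{not} invoke an external classification of $\okappa=0$ affine surfaces. Instead it argues directly: from $e(X)=0$ and Artin vanishing one gets $b_1(X)\geq 1$, hence (via logarithmic Hodge theory) the log irregularity $\overline{q}(X)\geq 1$, so the quasi-Albanese map $a\colon X\to S$ is nontrivial. Analysing the possible shapes of $S$ yields a surjective $\G_m$-fibration $X\to Y$ onto a curve with $\okappa(Y)=0$; then the Suzuki formula (using $e(X)=0$ again), Proposition~\ref{p:fat_fib}, and Lemma~\ref{l:smooth_K*} produce an \'etale cover $X''\cong\G_m^2$. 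This is more self-contained than your appeal to a Fujita/Miyanishi--Sugie style dichotomy, and it avoids the question of whether that dichotomy is stated in the literature in exactly the form you need (``$\Q$-homology plane or $Q$-torus'' is a bit coarser than what the classification actually says). Your Zariski-decomposition paragraph is a reasonable opening move toward such a classification, but as written it is orphaned: you set up $P\equiv 0$ and then jump to citing the result.

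There is also a genuine gap you have not addressed: the base field. Your classification input and the Euler-number argument are analytic/topological and live over $\C$, but the proposition is stated over an arbitrary algebraically closed $\kk$ of characteristic~$0$. The paper first proves Claim~\ref{claim:fib} over $\C$, then uses Remark~\ref{r:red_2C} together with Lemmas~\ref{lem:pi_1-inv} and~\ref{lem:extsemiabel} to descend the conclusion ``$X$ is a $Q$-algebraic torus'' to $\kk$. You would need to insert the same descent step before invoking Lemma~\ref{lem:torus-cls}.
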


\begin{proof}
	Assume first that $\kk=\C$.
	We begin with:

	\begin{claim}\label{claim:fib}
		$X$ is a \textit{Q}-algebraic torus (over $\kk=\C$).
	\end{claim}

	\begin{proof}[Proof of \cref{claim:fib}]
		By \cref{l:euler}, the topological Euler number $e(X)=0$.
		This and the Artin vanishing $H^j(X,\C)=0$ ($j>\dim X=2$) for affine varieties (cf.~\cite[Corollaire 3.5]{Art73}) imply the first Betti number $b_1(X)\geq 1$.
		Let $(V,D)$ be a log smooth compactification of $X$.
		The $E_1$-degeneration of the logarithmic Hodge-to-de Rham spectral sequence (cf.~\cite[Corollaire~3.2.13]{Del71}) implies:
		\begin{align*}
			1\leq b_1(X) & =h^0(V,\Omega_V^1(\log D))+h^1(V,\mathcal{O}_V)                     \\
			             & =h^0(V,\Omega_V^1(\log D))+h^0(V, \Omega_V^1)\leq 2\overline{q}(X),
		\end{align*}
		where the log irregularity $\overline{q}(X)\coloneqq h^0(V,\Omega_V^1(\log D))$.
		Hence the quasi-Albanese map $a\colon X\to S$ is non-trivial (to a semiabelian variety) with $\dim S=\overline{q}(X) \ge 1$.

		By \cite[Theorem 28]{Kaw81}, the map $a$ is dominant with general fibre irreducible.
		Let $T\subseteq S$ be the maximal subtorus and $A\coloneqq S/T$, a (projective) abelian variety.
		Then we get the dominant composition $b\colon X\to S\to A$.

		Suppose first that $\dim S=2=\dim A$.
		Then $S=A$ and $a\colon X\to A$ is a birational morphism.
		Take a log smooth compactification $X \subseteq V$ such that the map $a$ extends to a morphism $\overline{a}\colon V\to A$.
		Since the map $\overline{a}$ is also birational, we have $\kappa(V)=0$.
		But this contradicts $\overline{\kappa}(X)=0$ since $D=V\setminus X$ is a big divisor and so is $K_V+D$ (cf.~\cref{l:D_amp}).
		Therefore, this case cannot happen.

		When $\dim S=1$ (resp.~$\dim A=1$) we set $Y\coloneqq S$ (resp.~$Y\coloneqq A$);
		when $S=T$ and $\dim T=2$, we let $T\to Y\coloneqq\G_m$ be any projection;
		let $b: X\to Y$ be the natural dominant morphism, and $F$ its general fibre.
		By Iitaka's subadditivity (cf.~\cite[Theorem 11.15]{Iit82}),
		$0=\overline{\kappa}(X) \ge \overline{\kappa}(F)+\overline{\kappa}(b(X))\geq\overline{\kappa}(F)+\overline{\kappa}(Y)$.
		By the easy addition (cf.~\cite[Theorem 11.9]{Iit82}), $0=\overline{\kappa}(X) \le \overline{\kappa}(F)+\dim Y$; hence $\overline{\kappa}(F)\neq -\infty$.
		Combining the above all, we get $\overline{\kappa}(F)=0=\overline{\kappa}(Y)$ and $b$ is surjective.
		Hence $F\cong\G_m$ since $F\subseteq X$ is affine, and either $Y$ is an elliptic curve or $Y\cong\G_m$.
		Thus $X\to Y$ is a surjective $\G_m$-fibration.
		Since $e(X)=0$ by \cref{l:euler}, every fibre of $\pi$ has support $\G_m$ by the Suzuki formula (cf.~\cite{Suz77}, \cite{Gur97}).

		By \cref{p:fat_fib}, there is a finite surjective morphism $\phi\colon B'\to B$ such that the normalisation $X'$ (still affine) of $X \times_B B'$ is {\'e}tale over $X$ and the induced morphism $\pi'\colon X'\to B'$ is a $\G_m$-bundle.
		By \cref{l:smooth_K*}, $\overline{\kappa}(B')=\overline{\kappa}(X')$ ($= \overline{\kappa}(X)=0$).
		This and \cref{c:smooth_K*} imply that $B' \cong \G_m$.
		By \cref{l:smooth_K*} again, there is a finite {\'e}tale cover $\theta:B''\to B'$ (of degree $\le 2$) such that the base change $\pi''\colon X''\to B''$ of $\pi'$ via $\theta$ is a trivial $\G_m$-bundle.
		Since $B'$ is isomorphic to $\G_m$ so is its {\'e}tale cover $B''$.
		Now $X''$ is a trivial $\G_m$-bundle over $B'' \cong \G_m$, so $X'' \cong \G_m^2$.
		This proves \cref{claim:fib}.
	\end{proof}

	We return back to the proof of \cref{p:torus_lift}.
	Now for a general field $\kk$, we can use the same argument as in \cref{r:red_2C} to conclude that $X$ is a \textit{Q}-algebraic torus over $\kk$, with the help with \cref{claim:fib,lem:extsemiabel};
	note that any (connected) finite {\'e}tale cover of $X_{\kk}$ comes from a such one of $X_{\C}$ (cf.~\cref{lem:pi_1-inv}).
	Then, as a consequence of \cref{lem:torus-cls}, $f$ lifts to the algebraic torus closure $\pi_T\colon T\cong\G_m^2\to X$.
\end{proof}

Now we are ready for the following two proofs.

\begin{proof}[Proof of \cref{Thm_k>0_str}]
	It follows from Propositions~\ref{p:kappa=1_per} (and its proof) and~\ref{p:torus_lift}.
\end{proof}

\begin{proof}[Proof of \cref{Thm_k>0_conj}]
	By \cref{rem:AZO_to_ZDO}, for the ZDO~\ref{Conj:ZDO} part, we only need to prove AZO~\ref{Conj:AZO};
	further, for the AZO~\ref{Conj:AZO} part, we may assume that $(X,f)$ is defined over an algebraically closed field whose transcendence degree over $\Q$ is finite.
	For the KSC~\ref{Conj:KSC} part, as usual, we assume that $(X,f)$ is defined over $\overline{\Q}$.
	By \cite[Corollary 3.33]{Xie19}, we may assume that $\deg(f)\geq 2$.

	If $\overline{\kappa}(X)=2$, then $f$ is an automorphism of finite order by \cref{l:finord}, contradicting the
	extra assumption that $\deg(f)\geq 2$.

	If $\overline{\kappa}(X)=1$, then \cref{p:kappa=1_per} implies that there is a finite {\'e}tale cover $\phi\colon X''\to X$ such that $X''$ admits a $\G_m$-bundle structure $\pi''\colon X''\to B''$ and some iteration of $f$ lifts to $f''\colon X''\to X''$ and satisfies $\pi''\circ f''= \pi''$.
	Thus AZO~\ref{Conj:AZO} and KSC~\ref{Conj:KSC} hold for $(X'',f'')$ and hence also for $(X,f)$ (cf.~\cref{l:iter_inv}, \cite[Lemma~3.30]{Xie19} and \cref{p:kscequ}).

	If $\overline{\kappa}(X)=0$, then by \cref{p:torus_lift}, there is a finite {\'e}tale cover $\phi\colon\G_m^2\to X$ such that $f$ lifts to $f'\colon\G_m^2\to \G_m^2$.
	AZO~\ref{Conj:AZO} for semiabelian varieties is proved in \cref{Thm_semiAb} and KSC~\ref{Conj:KSC} for them is proved in \cite{MS20}.
	So AZO~\ref{Conj:AZO} and KSC~\ref{Conj:KSC} hold for $(X,f)$ too (cf.~\cite[Lemma 3.30]{Xie19} and \cref{p:kscequ}).
\end{proof}

\section[Extensions of polynomial maps:\\\mbox{} Proofs of Theorems~\ref{Thm_inf_pi1} and \ref{Thm_tri_poly}]{Extensions of polynomial maps: Proofs of\linebreak Theorems~\ref{Thm_inf_pi1} and \ref{Thm_tri_poly}}

The aim of this section is to show that Adelic Zariski Dense Orbit Conjecture (AZO~\ref{Conj:AZO})
is stable under extension by polynomial maps (cf.~\cref{thmskewprodpoly}).
With the help of this, we prove \cref{Thm_inf_pi1,Thm_tri_poly}.
The aim of this section is to prove \cref{Thm_semiAb} the proof of
which is similar to the proof of \cite[Theorem 1.14]{Xie19}.
See Subsection~\ref{setup:AdelicTopology} and \cref{rem:intersection_of_finite_adelic_open}
for a briefly introduction of adelic topology and a basic example of adelic open subset.
We follows the notations from there.
See \cite[Section 3]{Xie19} for the definition and more detailed discussions of the adelic topology.

Let $\kk$ be an algebraically closed field of finite transcendence degree over $\Q$.
We prove:

\begin{thm}\label{thmskewprodpoly}
	Let $X$ be a projective variety over $\kk$, and
	$f_1\colon X\dashrightarrow X$ a dominant rational self-map.
	Assume that AZO~\ref{Conj:AZO} holds for the pair $(X,f_1)$
	(this is the case when $\dim X=1$, cf.~\cref{l:iter_inv}).
	Let $f_2\in\kk(X)[y]\setminus\kk(X)$ be a nonconstant polynomial.
	Then AZO~\ref{Conj:AZO} and hence ZDO~\ref{Conj:ZDO}
	hold for the dominant rational self-map $f\colon X\times\PP^1\dashrightarrow X\times\PP^1$
	sending $(x,y)$ to $(f_1(x),f_2(x,y))$.
\end{thm}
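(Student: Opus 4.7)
The overall plan is to build the required adelic open set on $X\times\PP^1$ as the intersection of the pullback of an adelic open from the base with a fibre-wise adelic open produced by non-archimedean dynamics on $\PP^1$. I first reduce to the case $\kk(X\times\PP^1)^f=\kk$, since otherwise the conclusion is vacuous. Pulling functions back along $\pi\colon X\times\PP^1\to X$ gives an inclusion $\kk(X)^{f_1}\hookrightarrow\kk(X\times\PP^1)^f$, so $\kk(X)^{f_1}=\kk$. The assumed AZO~\ref{Conj:AZO} for $(X,f_1)$ then yields a nonempty adelic open subset $A_1\subseteq X(\kk)$ on which every $f_1$-orbit is well-defined and Zariski dense, and $\pi^{-1}(A_1)$ is a nonempty adelic open in $(X\times\PP^1)(\kk)$ by continuity of $\pi$ for the adelic topology (\cite[\S 3]{Xie19}).

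The next step is to analyse the potential obstructions. For $(x,y)\in\pi^{-1}(A_1)$, the closure $W\coloneqq\overline{O_f(x,y)}$ projects onto $X$, so if $W\neq X\times\PP^1$ then $W$ is a proper effective divisor dominating $X$, cut out by a non-zero polynomial $P\in\kk(X)[y]$. After replacing $f$ by an iterate (Lemma~\ref{l:iter_inv}), I may assume $W$ is genuinely $f$-invariant, which translates to a functional equation $P(f_1(x),f_2(x,y))=c(x)P(x,y)$ for some $c\in\kk(X)^{\times}$. The hypothesis $\kk(X\times\PP^1)^f=\kk$ forbids non-trivial $f$-invariant rational functions on the total space, which strongly constrains the admissible $P$; in particular, it rules out any $f$-invariant function obtainable by symmetric functions in the roots of $P$ in $y$ over $\kk(X)$.

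The core geometric input is non-archimedean dynamics on the fibres. I choose a model of $(X,f_1,f_2)$ over a finitely generated subring of $\kk$ and fix a non-archimedean place $v$ with good reduction admitting a $v$-adically attracting periodic point $x_0\in X(\kk_v)$ of $f_1$, with basin $B_v\subseteq X(\kk_v)$. Over $B_v$, the non-autonomous fibre map $y\mapsto f_2(f_1^{n}(x),y)$ converges $v$-adically to iterations of the single polynomial $p(y)\coloneqq f_2(x_0,y)\in\kk_v[y]$. Applying AZO for $(\PP^1,p)$ (the known one-dimensional case) produces an adelic open $A_0\subseteq\PP^1(\kk)$ of points whose $p$-orbit is Zariski dense in $\PP^1$. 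The candidate adelic open is then $A\coloneqq\pi^{-1}(A_1\cap B_v)\cap(X\times A_0)$, possibly further refined at finitely many other places to kill exceptional behaviour.

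The main obstacle will be verifying, for $(x,y)\in A$, that $O_f(x,y)$ avoids \emph{every} bad hypersurface $W$ from the second paragraph, not merely the $p$-invariant fibre over $x_0$. The convergence $f_2(f_1^{n}(x),y)\to p^{n}(y)$ is only $v$-adic and asymptotic, while the family of candidate $W$ is governed by the above functional equation over $\kk(X)$ and is not obviously finite. My plan for overcoming this is to combine a dynamical Mordell--Lang type input (such as \cite[Theorem 1.10]{BHS20}, already invoked in Section~\ref{dyn-arith}) with the functional rigidity imposed by $\kk(X\times\PP^1)^f=\kk$ to reduce the list of possible $W$ to a finite collection, analogously to Medvedev--Scanlon's classification of invariant curves under polynomial dynamics on $\A^1$; genericity of $(x_0,y)$ in the $v$-adic fibre then forces the actual orbit to escape each of them.
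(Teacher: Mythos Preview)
Your setup through the first paragraph is fine and matches the paper. The genuine gap is everything after that: both your $p$-adic mechanism and your endgame strategy are the wrong ones.

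On the base you posit a $v$-adically \emph{attracting} periodic point $x_0$ of $f_1$, but for a general dominant rational self-map of a projective variety no such point need exist (think of an isogeny on an abelian variety at a prime of good reduction). What the paper actually uses is \cite[Proposition~3.22]{Xie19}: after iterating $f$, there is a $p$-adic polydisk $V\subseteq X_K(\C_p)$ with $f_1(V)\subseteq V$, $f_1|_V\equiv\id\bmod p$, and an analytic $\C_p^{\circ}$-flow through $f_1|_V$. The consequence is \emph{recurrence}, namely $f_1^{p^n}(x)\to x$ for every $x\in V$, not attraction to a single point. Your plan to ``converge the non-autonomous fibre map to the autonomous $p(y)=f_2(x_0,y)$'' therefore has no footing.

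On the fibre the paper exploits the one feature of $f_2$ your proposal ignores: when $d\coloneqq\deg_y f_2\geq 2$, the point $\infty\in\PP^1$ is superattracting for every fibre map. One chooses a disk $U$ around $\infty$ with $f(V\times U)\subseteq V\times U$ and shows, using the base recurrence above, that for $z\in V\times U$ one has $f^{\ell+p^n}(z)\to (f_1^{\ell}(\pi(z)),\infty)$; hence $Z_{f_1}(\pi(z))\times\{\infty\}\subseteq Z_f(z)$. (The case $d=1$ is handled separately by \cite[Theorem~3.32]{Xie19}.)

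Finally, your ``main obstacle'' --- ruling out all invariant hypersurfaces $W$ via a Medvedev--Scanlon-type finiteness --- is exactly what the paper avoids. A second application of \cite[Proposition~3.22]{Xie19} gives an adelic open $B$ on which $Z_f(z)$ is \emph{irreducible}. For $z$ in $\pi^{-1}(A_1)\cap B\cap C\cap(X\times\A^1)$ (with $C$ the adelic open coming from $V\times U$), the closure $Z_f(z)$ is then irreducible, contains the irreducible divisor $X\times\{\infty\}$, and contains the point $z\notin X\times\{\infty\}$; hence $Z_f(z)=X\times\PP^1$. No classification of invariant divisors, no functional-equation rigidity, and no dynamical Mordell--Lang input is needed here. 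Your proposed route through reducing the family of $W$'s to a finite list is not known to work over an arbitrary base $X$ and is the wrong direction.
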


\begin{definition}
	We say a pair $(X,f)$ satisfies
	\emph{Strong Adelic Zariski Dense Orbit}-property (SAZO-property for short)
	if the $f$-orbit of an adelic general point is well-defined and Zariski dense in $X$.
\end{definition}

\begin{proof}[Proof of \cref{thmskewprodpoly}]
	The proof is similar to the proof of \cite[Theorem~4.1]{Xie19}.
	We may replace $f_1$ and $f$ by iterations (cf.~\cref{l_inv_fun_field,l:iter_inv}).

	Denote by $\pi\colon X\times\PP^1\to X$ the first projection.
	If $H\in\kk(X)^{f_1}\setminus\kk$, then $\pi^*H\in\kk(X\times\PP^1)^f\setminus\kk$.
	So we may assume that SAZO-property holds for $(X,f_1)$.
	For every $z\in X\times \PP^1$ (resp.~$x\in X$), denote by $Z_f(z)$ (resp.~$Z_{f_1}(x)$) the Zariski closure of the $f$-orbit $O_f(z)$ of $z$ (resp.~the $f_1$-orbit $O_{f_1}(x_1)$ of~$x$).

	If $\deg_y(f_2)=1$, \cref{thmskewprodpoly} holds by \cite[Theorem~3.34]{Xie19}.
	Now assume $d\coloneqq\deg_y(f_2)\geq 2$.
	Write $f_2(x,y)=\sum_{i=0}^d a_i(x)y^i$ with $a_i\in\kk(X)$.
	There is a Zariski dense open subset $X'$ of $X$ such that $a_i\in\mathcal{O}(X')$ ($0 \le i \le d$) and $a_d(x)\neq 0$ for any $x \in X'$.

	We may assume that there is a nonempty adelic open subset $A \subseteq X$ such that for every $x\in A$, the $f_1$-orbit of $x$ is well-defined and Zariski dense in $X$.
	Let $K$ be a subfield of $\kk$ which is finitely generated over $\Q$, such that $\overline{K}=\kk$ and $f_1,f_2,X$ are defined over $K$.

	By \cite[Proposition 3.24]{Xie19}, replacing $f$ by an iteration,
	there is a nonempty adelic open subset $B\subseteq (X\times \PP^1)(\kk)$ such that for every point $z\in B$,
	the $f$-orbit of $z$ is well-defined and $Z_f(z)$ is irreducible.
	By \cite[Proposition 3.24]{Xie19} again, replacing $f$ by a positive power,
	we may assume that there exist a prime $p\geq 3$,
	an embedding $i\colon K\hookrightarrow\C_p$,
	and an open subset $V\simeq (\C_p^{\circ})^{\dim X}$ of $X_K(\C_p)$ which is $f_1$-invariant,
	such that the $f_1$-orbits of the points in $V$ are well-defined and $f_1|_V=\id\,\bmod\; p$.
	Moreover, there is an analytic action $\Phi\colon\C_p^{\circ}\times V\to V$ of $(\C_p^{\circ},+)$ on $V$ such that for every $n\in \Z_{\geq 0}$, we have $\Phi(n,\cdot)=f_1^{n}|_{V}(\cdot)$.
	In particular, $f_1^{p^n}(x)\to x$ when $n\to\infty$ for every $x\in V$.

	There is some $M\geq 1$ such that for every $i=0,1,\dots,d$ and $x\in V$,
	we have $\vert a_i(x)\vert\leq M$ and $\vert a_d(x)\vert\geq M^{-1}$.
	Pick some $R>M^2$.
	Let $U$ be the disc $\{\vert y\vert\geq R\}\cup\{\infty\}$ in $\PP^1(\C_p)$, where $y$ is the affine coordinate of $\PP^1$.
	Then $f$ is well-defined on $V\times U$ and $f(V\times U)\subseteq V\times U$.
	Moreover, for every $(x,y)\in V\times U$, $\ell\geq 0$,
	we have $f^{\ell+p^n}(x,y)\to (f_1^{\ell}(x),\infty)\subseteq V\times\{\infty\}$ when $n\to\infty$.
	In particular, we have
	$Z_{f_1}(\pi(z))\times\{\infty\}\subseteq Z_f(z)$.
	This property is purely algebraic,
	so for every $z\in C\coloneqq (X\times\PP^1)_K(i,U\times V)$,
	we have $Z_{f_1}(\pi(z))\times\{\infty\}\subseteq Z_f(z)$.

	By \cite[Proposition 3.18]{Xie19} or \cref{rem:intersection_of_finite_adelic_open},
	$\pi^{-1}(A)\cap B\cap C\cap (X\times \A^1)$
	is a nonempty adelic open subset of $(X\times \PP^1)(\kk)$.
	For every $z\in \pi^{-1}(A)\cap B\cap C\cap (X\times \A^1)$, we have
	\begin{enumerate}
		\item the orbits of $z$ and $\pi(z)$ are well-defined;
		\item $Z_{f_1}(\pi(z))=X$;
		\item $Z_{f}(z)$ is irreducible;
		\item $Z_{f_1}(\pi(z))\times\{\infty\}\subseteq Z_f(z)$; and
		\item $z\in Z_f(z)\setminus(X\times\{\infty\})$.
	\end{enumerate}
	It follows that $Z_f(z)=X\times\PP^1$.
	This proves \cref{thmskewprodpoly}.
\end{proof}

\Cref{thmskewprodpoly} above is the key in the following proof.

\begin{theorem}\label{ThmC}
	Let $X\coloneqq \A^1 \times \G_m$.
	Let $f\colon X\to X$ be a finite surjective endomorphism.
	Then AZO~\ref{Conj:AZO} and hence ZDO~\ref{Conj:ZDO} hold for $(X,f)$.
\end{theorem}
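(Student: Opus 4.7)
The plan is to exhibit $f$ as a skew product over a surjective endomorphism of $\G_m$ and then apply Theorem~\ref{thmskewprodpoly}. With coordinates $(x,y)$ on $X=\A^1\times\G_m$, consider the second projection $\pi\colon X\to\G_m$, which is an $\A^1$-fibration. Since $\overline{\kappa}(\G_m)=0>-\infty=\overline{\kappa}(\A^1)$, Lemma~\ref{l:descending} shows that $f$ descends along $\pi$ to a surjective morphism $g\colon\G_m\to\G_m$; hence $f$ has the form $f(x,y)=(F(x,y),g(y))$ for some $F\in\kk[y,y^{-1}][x]$. Surjectivity of $f$ onto the two-dimensional $X$ forces $\deg_x F\geq 1$, for otherwise $f(X)$ would lie in the one-dimensional locus $\{(F(y),g(y)):y\in\G_m\}$. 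Any surjective morphism $\G_m\to\G_m$ is of the form $y\mapsto cy^n$ with $c\in\kk^\times$ and $n\in\Z\setminus\{0\}$, and so extends to a morphism $\overline{g}\colon\PP^1\to\PP^1$.

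We may assume $\kk(X)^f=\kk$, for otherwise AZO~\ref{Conj:AZO} is vacuous; this forces $\kk(\PP^1)^{\overline{g}}=\kk(\G_m)^g=\kk$. Swap the two factors of $X$ and view $X\cong\G_m\times\A^1$ as an open subvariety of $\overline{X}\coloneqq\PP^1\times\PP^1$. Then $f$ extends to the rational self-map
\[
    \overline{f}\colon\overline{X}\dashrightarrow\overline{X},\qquad(y,x)\mapsto(\overline{g}(y),F(x,y)),
\]
which is exactly the skew-product form required by Theorem~\ref{thmskewprodpoly}, with base pair $(\PP^1,\overline{g})$ and polynomial $F\in\kk(\PP^1)[x]$ of positive $x$-degree. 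Since AZO~\ref{Conj:AZO} holds non-vacuously for the one-dimensional pair $(\PP^1,\overline{g})$, Theorem~\ref{thmskewprodpoly} yields AZO~\ref{Conj:AZO} for $(\overline{X},\overline{f})$.

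Finally, I would descend AZO from $\overline{X}$ to the open subvariety $X$. Let $A\subseteq\overline{X}(\kk)$ be the nonempty adelic open subset supplied by AZO for $\overline{f}$, and set $A'\coloneqq A\cap X(\kk)$. Since $X(\kk)$ is Zariski (hence adelic) open and dense in $\overline{X}(\kk)$, the set $A'$ is a nonempty adelic open subset of $X(\kk)$. For every $z\in A'$, the morphism $f=\overline{f}|_X$ sends $X$ to $X$, so the $f$-orbit of $z$ is well defined and coincides with the $\overline{f}$-orbit; being Zariski dense in $\overline{X}$ and contained in the dense open $X$, it is also Zariski dense in $X$. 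The main potential obstacle is this final descent step, as it relies on the formal fact that a nonempty adelic open of $\overline{X}(\kk)$ meets every dense Zariski open nontrivially; this is an intrinsic property of the adelic topology (cf.~\cite[Lemma 3.28]{Xie19}, used similarly in the proof of Theorem~\ref{Thm_k>0_conj}).
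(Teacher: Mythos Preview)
Your proof is correct and follows essentially the same approach as the paper: descend $f$ along the $\A^1$-fibration $X\to\G_m$ via Lemma~\ref{l:descending}, then invoke Theorem~\ref{thmskewprodpoly}. The paper's proof is two sentences and leaves implicit the points you spell out---the compactification of the base $\G_m$ to $\PP^1$ (needed because Theorem~\ref{thmskewprodpoly} requires a projective base), the check that $\deg_x F\geq 1$, and the passage of AZO from $(\PP^1\times\PP^1,\overline{f})$ back to the open subvariety $X$---so your version is simply a more detailed execution of the same argument.
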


\begin{proof}
	Note that $\overline{\kappa}(\G_m)=0$.
	By \cref{l:descending}, $f$ descends along the natural projection $\pi\colon X\to B=\G_m$.
	Hence $f$ is of the form in \cref{thmskewprodpoly}.
	The result follows.
\end{proof}

Now we are ready for the following two proofs.

\begin{proof}[Proof of \cref{Thm_inf_pi1}]
	AZO~\ref{Conj:AZO} is known when $\deg(f)=1$ (cf.~\cite[Corollary 3.33]{Xie19}).
	So we always assume further $\deg(f)\geq 2$ for the AZO~\ref{Conj:AZO} part.

	If $\overline{\kappa}(X)\geq 0$, then (1) and (2) follow from \cref{Thm_k>0_conj}.
	Suppose that $\overline{\kappa}(X)=-\infty$.
	Then there is an $\A^1$-fibration $\pi\colon X\to B$
	since $X$ is affine and by the open surface theory (cf.~\cite[Ch.~3, Theorem 1.3.2]{Miy01}).
	If $\overline{\kappa}(B)=-\infty$,
	then $f$ descends to an automorphism $f|_B$ on $B$ of finite order by applying \cref{p:big_pi_1}.
	In this case,
	both AZO~\ref{Conj:AZO} and KSC~\ref{Conj:KSC} are vacuously true (cf.~\cref{l_inv_fun_field}).
	So we may assume that $\overline{\kappa}(B)\geq 0$.
	By \cref{l:descending}, $f$ descends to a surjective endomorphism $f|_B$ of $B$.
	If $\deg(f)=1$, then $f|_B$ is an automorphism of $B$.
	We see that $d_1(f)=1$ (cf.~\cite[Theorem 4]{Dan20}, arXiv version) and hence (3) holds.

	We may assume $\deg(f)\geq 2$ for both (1) and (2).
	We may assume also that $\overline{\kappa}(B)=0$ and $\pi\colon X\to B$ is an $\A^1$-bundle;
	otherwise, $g$ is of finite order by \cref{lem:A1_fib} and \cref{p:bad_fib},
	and we are done as before.
	By \cref{lem:ell_base,lem:embd_in_P1xP1},
	AZO~\ref{Conj:AZO} and KSC~\ref{Conj:KSC} are true
	unless $X\cong\A^1\times\G_m$ (cf.~\cref{l_inv_fun_field}).
	Then (2) follows, and (1) holds true by \cref{ThmC}.
\end{proof}

\begin{proof}[Proof of \cref{Thm_tri_poly}]
	It follows from \cref{thmskewprodpoly} by induction on the factors.
\end{proof}

\section[Maps with larger first dynamical degree:\\\mbox{} Proof of Theorem~\ref{Thm_big_d1}]{Maps with larger first dynamical degree: Proof of Theorem~\ref{Thm_big_d1}}

In this section, we consider Zariski Dense Orbit Conjecture (ZDO~\ref{Conj:ZDO}) via the arithmetic degree.
Let $X$ be a projective variety over $\overline{\Q}$ and $f\colon X\to X$ a surjective morphism.

The following is a generalisation of \cite[Lemma 9.1]{MSS18} to the singular case, but the proof of \cite[Lemma 9.1]{MSS18} is valid even in the singular case.

\begin{proposition}\label{proextarhitd}
	Assume that $d_1(f)>1$.
	Let $D\not\equiv 0$ be a nef $\R$-Cartier divisor on $X$ such that $f^*D\equiv d_1(f)D$.
	Let $V\subseteq X$ be a subvariety of positive dimension such that $(D^{\dim V}\cdot V)>0$.
	Then there exists a nonempty open subset $U\subseteq V$ and a set $S\subseteq U(\overline{\Q})$ of bounded height such that for every $x\in U(\overline{\Q})\setminus S$ we have $\alpha_f(x)=d_1(f)$.
\end{proposition}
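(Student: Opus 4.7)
The plan is to adapt the canonical-height argument of \cite[Lemma 9.1]{MSS18} to the singular setting. Fix an ample divisor $H$ on $X$ and Weil heights $h_H\geq 1$ and $h_D$. Since $f^*D - d_1(f)D\equiv 0$, the standard fact that a height associated to a numerically trivial $\R$-Cartier divisor is $O(\sqrt{h_H}+1)$ yields a constant $C>0$ with
\[
	\bigl\vert h_D(f(x))-d_1(f)\,h_D(x)\bigr\vert\leq C\bigl(\sqrt{h_H(x)}+1\bigr).
\]
Combined with Matsuzawa's estimate \eqref{equationmat}, namely $h_H^+(f^n(x))\leq K_\epsilon(d_1(f)+\epsilon)^n h_H^+(x)$, and the elementary fact that $d_1(f)>\sqrt{d_1(f)+\epsilon}$ for small $\epsilon$ whenever $d_1(f)>1$, a Tate-style telescoping argument shows that
\[
	\hat h_D(x)\coloneqq\lim_{n\to\infty}d_1(f)^{-n}h_D(f^n(x))
\]
converges for every $x\in X(\overline\Q)$, satisfies $\hat h_D\circ f=d_1(f)\,\hat h_D$ and $\vert\hat h_D-h_D\vert\leq C'(\sqrt{h_H^+}+1)$, and is non-negative (since $D$ is nef).

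Next, the hypothesis $(D^{\dim V}\cdot V)>0$, together with nefness of $D$, means that $D|_V$ is big and nef on $V$ (after passing to the normalization of $V$ in the singular case). By Kodaira's lemma, we can write $D|_V\equiv A+E$ with $A$ an ample $\R$-divisor on $V$ and $E$ an effective $\R$-divisor. Let $U\coloneqq V\setminus\Supp(E)$, a nonempty open subset of $V$; on $U$ one has a lower bound $h_D(x)\geq c\,h_H(x)-C''$ for some $c>0$. Combined with $\vert\hat h_D-h_D\vert=O(\sqrt{h_H^+}+1)$, this gives $\hat h_D(x)\geq c\,h_H(x)-C'''\sqrt{h_H^+(x)}-C'''$ for every $x\in U(\overline{\Q})$, so the set $S\coloneqq\{x\in U(\overline\Q):\hat h_D(x)=0\}$ has bounded $h_H$-height.

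Finally, for $x\in U(\overline\Q)\setminus S$ we have $\hat h_D(x)>0$, whence
\[
	h_D(f^n(x))=d_1(f)^n\hat h_D(x)+O\bigl((d_1(f)+\epsilon)^{n/2}\bigr).
\]
Comparing $h_D$ with an ample height $h_L$ on $X$ via $h_D\leq c_1 h_L+c_2$, taking $n$-th roots, and letting $\epsilon\to 0$, we obtain $\underline\alpha_f(x)\geq d_1(f)$. The opposite inequality $\overline\alpha_f(x)\leq d_1(f)$ is Proposition~\ref{proupboundarth}, hence $\alpha_f(x)=d_1(f)$. The main obstacle will be ensuring the three height-theoretic inputs (the $O(\sqrt{h_H})$-bound for numerically trivial $\R$-divisors, Matsuzawa's exponential bound, and Kodaira's lemma applied to $D|_V$) remain valid on possibly singular $X$ and $V$; each extends routinely by pulling back to a resolution of singularities and invoking the projection formula, after which the \cite{MSS18} argument goes through verbatim.
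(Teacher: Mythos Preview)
Your proposal is correct and follows exactly the canonical-height approach of \cite[Lemma~9.1]{MSS18}, which is precisely what the paper invokes: the paper gives no separate argument, stating only that the proof of \cite[Lemma~9.1]{MSS18} remains valid in the singular case. Your sketch fills in that assertion with the expected details (pulling the height-theoretic inputs back to a resolution and using Proposition~\ref{proupboundarth} for the upper bound), so it coincides with the paper's intended approach.
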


\begin{remark}\label{rempickcd}
	Let $C$ be an irreducible curve
	which is a complete intersection of $\dim X-1$ of ample effective divisors on $X$.
	Then $(D\cdot C)>0$.
\end{remark}

As defined in \cite{MMSZZ20},
an $f$-periodic subvariety $V$ is said to be of \emph{Small Dynamical Degree} (SDD for short)
if the first dynamical degree $d_1(f^s|_V)<d_1(f^s)$ for some $s\geq 1$ such that $f^s(V)=V$.

\begin{definition}
	We say that $(X,f)$ satisfies the \emph{SDD condition}
	if there is an $f^{-1}$-invariant Zariski closed proper subset $Z$ of $X$
	such that all irreducible $f$-periodic proper subvarieties not being contained in $Z$, are SDD\@.
\end{definition}

The SDD condition is a dynamical property of the
algebraic dynamical system \((X,f)\).

\begin{thm}\label{thmssdzdo}
	If $(X,f)$ satisfies the SDD condition, then ZDO~\ref{Conj:ZDO} holds for $(X,f)$.
\end{thm}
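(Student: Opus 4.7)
The plan is to produce a point $x \in X(\overline{\Q}) \setminus Z$ with $\alpha_f(x) = d_1(f)$ using Proposition~\ref{proextarhitd}, and then exploit the SDD hypothesis together with the upper bound in Proposition~\ref{proupboundarth} to obstruct every proper orbit closure outside $Z$.

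I will first treat the case $d_1(f) > 1$. A Perron--Frobenius argument for the action of $f^*$ on $\N^1(X)_{\R}$ preserving the nef cone produces a nonzero nef $\R$-Cartier divisor $D$ with $f^*D \equiv d_1(f)D$. Choose an irreducible complete-intersection curve $V \subseteq X$ cut out by $\dim X - 1$ sufficiently generic very ample effective divisors, so that $V \not\subseteq Z$; then $(D \cdot V) > 0$ by Remark~\ref{rempickcd}. Proposition~\ref{proextarhitd} delivers a nonempty open $U \subseteq V$ and a subset $S \subseteq U(\overline{\Q})$ of bounded height such that $\alpha_f(x) = d_1(f)$ for every $x \in U(\overline{\Q}) \setminus S$. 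Because $V$ is a curve, $V \cap Z$ is finite, and the heights of $\overline{\Q}$-points of $V$ are unbounded, so $(V \cap Z) \cup S$ cannot exhaust $U(\overline{\Q})$; I then pick
\[
x \in (U \setminus Z)(\overline{\Q}) \setminus S, \qquad \alpha_f(x) = d_1(f).
\]

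Next, using the identities $\alpha_{f^s}(x) = \alpha_f(x)^s$ and $d_1(f^s) = d_1(f)^s$ (cf.~\ref{subsec_arithdeg}), Noetherian stabilisation of the chain $\overline{O_f(f^n(x))}$, and the permutation of its irreducible components under $f$, I replace $(f,x)$ by $(f^s, f^N(x))$ for suitable $s, N \geq 1$ so that $V_0 := \overline{O_f(x)}$ is irreducible and $f(V_0) = V_0$. Since $Z$ is $f^{-1}$-invariant and $x \notin Z$, the forward orbit of $x$ avoids $Z$, so $V_0 \not\subseteq Z$. If $V_0 = X$, the orbit of $x$ is Zariski dense and we are done. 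Otherwise $V_0$ is an irreducible proper $f$-periodic subvariety not contained in $Z$, hence SDD by hypothesis, giving $d_1(f|_{V_0}) < d_1(f)$. But Lemma~\ref{lem_subvar} gives $\alpha_{f|_{V_0}}(x) = \alpha_f(x) = d_1(f)$, while Proposition~\ref{proupboundarth} gives $\alpha_{f|_{V_0}}(x) \leq d_1(f|_{V_0}) < d_1(f)$, a contradiction. Hence $V_0 = X$.

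For the remaining case $d_1(f) = 1$, I use that $d_1(g) \geq 1$ for every surjective endomorphism $g$ of a projective variety (by intersection-theoretic positivity of $(g^n)^*L \cdot L^{n-1}$ for $L$ ample). This rules out $d_1(g^m|_V) < 1$, so no proper $f$-periodic subvariety can be SDD; the SDD hypothesis therefore forces every irreducible $f$-periodic proper subvariety to lie in $Z$. Then for any $x \in X(\overline{\Q}) \setminus Z$, the same orbit-closure reduction gives an irreducible $f$-invariant $V_0 = \overline{O_f(x)} \not\subseteq Z$, which must equal $X$.

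The main obstacle is the $d_1(f) > 1$ case: simultaneously ensuring the existence of a point on the curve $V$ that is outside $Z$, outside the bounded-height exceptional set $S$, and has $\alpha_f(x) = d_1(f)$. Proposition~\ref{proextarhitd} does the heavy lifting here; one must only check the two genericity claims (that $V$ can avoid $Z$ and that $S \cup (V \cap Z)$ is a proper subset of $U(\overline{\Q})$), both routine. The remainder is a clean contradiction via Lemma~\ref{lem_subvar} and Proposition~\ref{proupboundarth}.
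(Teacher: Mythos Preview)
Your proof is correct and follows essentially the same route as the paper's: produce a nef eigenclass $D$ via Perron--Frobenius, cut a generic complete-intersection curve $C \not\subseteq Z$, use Proposition~\ref{proextarhitd} together with Northcott to find $x \in C(\overline{\Q})\setminus Z$ with $\alpha_f(x)=d_1(f)$, pass to an irreducible periodic piece of the orbit closure, and derive a contradiction from Lemma~\ref{lem_subvar} and Proposition~\ref{proupboundarth}; the $d_1(f)=1$ case is handled identically. The only cosmetic difference is that you replace $(f,x)$ by an iterate to make the orbit closure itself irreducible, whereas the paper instead invokes \cite[Lemma~2.7]{MMSZ20} to locate an $f^s$-invariant irreducible component containing an iterate of $x$; both routes use (implicitly) that $d_1(f^m|_V)<d_1(f)^m$ for one period $m$ forces it for every period, via multiplicativity of $d_1$ under iteration.
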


\begin{proof}
	Set $\ell\coloneqq\dim X$.
	By the assumption,
	there is an $f^{-1}$-invariant Zariski closed subset $Z$ of $X$
	such that all $f$-periodic proper subvarieties not being contained in $Z$, are SDD\@.

	Assume first $d_1(f)=1$.
	Since $(X,f)$ satisfies the SDD condition, there is no proper $f$-periodic subvarieties outside $Z$.
	Pick any point $x\in X(\overline{\Q})\setminus Z$
	and let $Z_f(x)$ be the Zariski closure of the $f$-orbit $O_f(x)$ of $x$.
	Then, for some $t\geq 1$,
	$f^t(x)$ is contained in a $f$-periodic subvariety of $Z_f(x)$ (cf.~\cite[Lemma 2.7]{MMSZ20}),
	which is hence either equal to $X$ or contained in $Z$.
	In the latter case, $x$ is contained in $f^{-t}(Z)=Z$, which is a contradiction.
	Thus the theorem is true when $d_1(f) = 1$.

	Now we may assume that $d_1(f)>1$.
	By the generalised Perron-Frobenius theorem due to Birkhoff,
	there is a nonzero nef $\R$-divisor $D\in\N^1(X)\coloneqq\NS(X) \otimes_{\Z} \R$ such that $f^*D\equiv d_1(f)D$.
	Let $H_1,\dots,H_{\ell-1}$ be general very ample divisors on $X$.
	By applying Bertini's theorem to the pullback of $|H_i|$ to a smooth model of $X$, we may assume that $C\coloneqq H_1\cap \dots \cap H_{\ell-1}$ is irreducible and it is not contained in $Z$.
	By \cref{rempickcd}, we may apply \cref{proextarhitd} to the curve $C$.
	By the Northcott property, there is a point $x\in C(\overline{\Q})\setminus Z$ with $\alpha_f(x)=d_1(f)$.

	For some $t\geq 1, s\geq 1$, our $f^t(x)$ is contained in an $f^s$-invariant irreducible component $V$ of $Z_f(x)$ (cf.~\cite[Lemma 2.7]{MMSZ20}).
	If $V=X$ then $Z_f(x)=X$ and we are done.
	If $V\subseteq Z$ then $x\in f^{-t}(V)\subseteq f^{-t}(Z)=Z$, absurd.
	Thus $V$ is not contained in $Z$, and is an $f^s$-invariant proper subvariety of $X$.
	Hence $V$ is SDD\@.
	So $d_1(f^s|_V)<d_1(f^s)$.
	Set $y\coloneqq f^t(x)\in V$.
	Then $\alpha_f(y)=\alpha_f(x)=d_1(f)$.
	Now (cf.~\cref{lem_subvar})
	\[
		d_1(f^s|_V)<d_1(f)^s=\alpha_f(y)^s=\alpha_{f^s}(y)=\alpha_{f^s|_V}(y),
	\]
	which contradicts \cref{proupboundarth}.
	This proves \cref{thmssdzdo}.
\end{proof}

It is clear that the SDD condition is satisfied when $\dim X=1$ and $\ord(f)=\infty$.
There are still some nontrivial examples where the SDD condition is satisfied.

\begin{proposition}\label{prodim23ssd}
	Let $f\colon X\to X$ be a dominant endomorphism of projective variety.
	Assume either one of the following two conditions.
	\begin{enumerate}
		\item $\dim X=2$, $d_1(f)>1$ and $d_1(f)\geq d_2(f)$; or
		\item $\dim X=3$, and $d_1(f)>d_3(f)=1$.
	\end{enumerate}
	Then either there is an $f^*$-invariant nonconstant rational function on $X$, or $(X,f)$ satisfies the SDD condition.
\end{proposition}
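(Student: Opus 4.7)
The plan is to establish the contrapositive: assume there is no nonconstant $f^{*}$-invariant rational function on $X$, and construct an $f^{-1}$-invariant Zariski closed proper subset $Z\subseteq X$ containing every non-SDD $f$-periodic proper subvariety of positive dimension, thereby verifying the SDD condition. After replacing $f$ by a positive iterate (which preserves both hypotheses), I may assume the non-SDD $f$-periodic subvarieties under consideration are $f$-invariant. By Proposition~\ref{p:dyn_sub_var}, any such invariant $V$ satisfies $d_1(f|_V)\leq d_1(f)$, with ``non-SDD'' meaning equality. In case~(2), $d_3(f)=1$ forces $f$ to be birational and (on a smooth projective model) an automorphism; consequently every $f$-invariant curve $C$ has $d_1(f|_C)=1<d_1(f)$ and is automatically SDD, so the potentially non-SDD positive-dimensional $f$-invariant subvarieties are surfaces, all of codimension one.

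The first key step is the finiteness of the set $\{V_i\}$ of non-SDD $f$-invariant irreducible codimension-one subvarieties. The class $[V_i]\in\N^1(X)$ is an eigenvector of the induced action of $f$ on $\N^1(X)$ with eigenvalue $d_1(f)$: in case~(1), $f_{*}[V_i]=\deg(f|_{V_i})[V_i]=d_1(f)[V_i]$, and analogously for $f^{*}$ in case~(2). The $d_1(f)$-eigenspace is finite-dimensional and integral classes in $\N^1(X)$ are discrete, so only finitely many numerical classes occur; within a single class, infinitely many effective representatives yield an algebraic family, and passing to a pencil and Stein factorising produces an $f$-equivariant dominant rational map $\pi\colon X\dashrightarrow Y$ onto a curve $Y$. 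Since each $V_i$ is $f$-invariant, the induced endomorphism $g\colon Y\to Y$ fixes infinitely many points of $Y$, forcing $g=\id_Y$; then $\pi^{*}\kk(Y)$ is a nonconstant $f^{*}$-invariant subfield of $\kk(X)$---a contradiction. Set $\Sigma\coloneqq\bigcup_i V_i$.

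The construction of $Z$ completes the argument. In case~(2), each $V_i$ satisfies $f^{-1}(V_i)=V_i$ (since $f$ is an automorphism), so $Z\coloneqq\Sigma$ is immediately $f^{-1}$-invariant, closed and proper. In case~(1) I set $Z\coloneqq\overline{\bigcup_{n\geq 0}f^{-n}(\Sigma)}$, which satisfies $f^{-1}(Z)\subseteq Z$ by construction; the remaining task is to show $Z\subsetneq X$. Arguing by contradiction, if $Z=X$ the infinitely many curves in the backward orbit cover $X$ and span a covering pencil, giving an $f$-equivariant fibration $\pi\colon X\dashrightarrow Y$ onto a curve $Y$ on whose base every $\overline{\Q}$-point is pre-periodic under the induced $g\colon Y\to Y$. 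This forces $g$ to be of finite order: a non-constant $g$ of degree $\geq 2$ would, via Proposition~\ref{proextarhitd} or directly by Northcott's theorem, exhibit generic points of unbounded forward height that cannot be pre-periodic, while an infinite-order automorphism of a curve likewise has generic orbits of infinite length. With $g$ finite of some order $N$, $\kk(Y)^{g}\supsetneq\kk$ and $\pi^{*}(\kk(Y)^{g})$ yields a nonconstant $f^{*}$-invariant rational function on $X$, again contradicting the hypothesis.

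The main obstacle is precisely this dim~$2$ step: the fibres of the backward-orbit pencil are generically merely \emph{pre-periodic} rather than $f$-invariant, so the clean fixed-point count used in the finiteness step is unavailable. The resolution leverages the stronger conclusion that \emph{every} $\overline{\Q}$-orbit on $Y$ is pre-periodic, combined with an arithmetic-degree / Northcott argument on $Y$, to force $g$ to be of finite order and thereby extract the invariant rational function.
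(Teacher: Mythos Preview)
Your argument has a genuine gap at the finiteness step, and the paper's proof takes a different and cleaner route through Cantat's theorem \cite{Can10}.

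First, the eigenvector assertion does not yield what you need. In case~(2), since $f$ is an automorphism and $f(V_i)=V_i$, one has $f^*[V_i]=[f^{-1}(V_i)]=[V_i]$, so the eigenvalue on $\N^1(X)$ is $1$, not $d_1(f)$; the non-SDD condition $d_1(f|_{V_i})=d_1(f)$ concerns $\N^1(V_i)$, not $\N^1(X)$. In case~(1), even granting $f_*[V_i]=d_1(f)[V_i]$, the sentence ``the $d_1(f)$-eigenspace is finite-dimensional and integral classes are discrete, so only finitely many numerical classes occur'' is a non sequitur: a finite-dimensional real subspace can contain infinitely many lattice points. Your pencil argument then handles only the sub-case where infinitely many $V_i$ share a single numerical class, leaving the case of infinitely many distinct classes untouched. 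This is precisely the content of Cantat's theorem, which the paper invokes as a black box: if there are infinitely many pairwise non-overlapping $f^{-1}$-periodic hypersurfaces, then $\kk(X)^f\neq\kk$.

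Second, in case~(1) the backward-orbit construction and the pre-periodicity argument on the base curve are unnecessary. Observe that if $V$ is an $f^s$-invariant non-SDD curve, then $d_1(f)^s=d_1(f^s|_V)=\deg(f^s|_V)\leq\deg(f^s)=d_2(f)^s\leq d_1(f)^s$, forcing equality throughout; in particular $\deg(f^s|_V)=\deg(f^s)$, so $f^{-s}(V)=V$ and $V$ is already $f^{-1}$-periodic. Thus the union of all such $V$ is automatically $f^{-1}$-invariant---no closure of backward orbits is needed, and the delicate ``every point of $Y$ is pre-periodic'' step never arises.

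The paper's proof exploits exactly this: assuming $\kk(X)^f=\kk$, Cantat gives a proper $f^{-1}$-invariant $Z$ containing every $f^{-1}$-periodic hypersurface. In case~(1), any $f^m$-invariant curve $V\not\subseteq Z$ then satisfies $f^{-m}(V)\supsetneq V$, whence $\deg(f^m|_V)<\deg(f^m)\leq d_1(f^m)$, so $V$ is SDD. In case~(2), all $f$-periodic surfaces lie in $Z$ (being $f^{-1}$-periodic since $f$ is an automorphism), and every $f^m$-invariant curve has $d_1(f^m|_V)=1<d_1(f^m)$. Your reduction ``curves in case~(2) are automatically SDD'' is correct and matches the paper; the missing ingredient is the finiteness of periodic hypersurfaces, for which you need \cite{Can10} or an equivalent.
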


\begin{proof}
	Fix an embedding $\overline{\Q}\hookrightarrow\C$.
	If there are infinitely many $f^{-1}$-periodic pairwise component non-overlapping hypersurfaces of $X$,
	then we may find sufficiently many $f^{-1}$-invariant (not necessarily irreducible) hypersurfaces and hence so does for $(X_{\C},f_{\C})$.
	By \cite[Theorem B]{Can10}, $(f_{\C})^*$ preserves a nonconstant rational function on $X_{\C}$ and hence so does $(X,f)$ (cf.~\cref{l_inv_fun_field}).
	Thus we may assume that there is an $f^{-1}$-invariant hypersurface $Z$ such that for every hypersurface $H$ of $X$, if it is $f^{-m}$-invariant for some $m\geq 1$, then we have $H\subseteq Z$.

	Let $V$ be any irreducible $f$-periodic subvariety of period $m\geq 1$.
	If $\dim V=0$, then we have $d_1(f^m)>1=d_1(f^m|_V)$.

	Assume that $(1)$ holds.
	We may assume that $\dim V=1$ and $V\not\subseteq Z$.
	Hence $f^{-m}(V)=V\cup V'$ for some (nonempty) curve $V'\neq V$.
	Then $\deg(f^m|_V)<\deg(f^m)\leq d_1(f^m)$.

	Now assume that $(2)$ holds.
	Since $f$ is an automorphism, all $f$-periodic hypersurfaces are contained in $Z$.
	So we may assume that $\dim V=1$ and $V\not\subseteq Z$.
	Since $f$ is an automorphism and $V$ is a curve, $d_1(f^m|_V)=\deg(f^m|_V)=1<d_1(f^m)$.
\end{proof}

\begin{remark}
	The proof of \cref{prodim23ssd} still works when \(f\) is a rational self-map. We omit the details and leave its verification to interested readers
	(also because \cref{thmssdzdo} is not extendable to the rational map case at the moment).
\end{remark}

\begin{proposition}\label{prodimhigdim}
	Let $f\colon X\to X$ be a dominant endomorphism of a smooth projective variety of dimension $d\geq 2$.
	Suppose $d_1(f)>\max_{i=2}^{d}\{d_{i}(f)\}$.
	Then $(X,f)$ satisfies the SDD condition.
\end{proposition}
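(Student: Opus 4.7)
The strategy is to mimic the proof of Proposition~\ref{prodim23ssd}, replacing the ad-hoc surface arguments by cohomological inputs valid in higher dimension. First, I claim the hypothesis already forces $\kk(X)^f=\kk$. Indeed, any nonconstant $\phi\in\kk(X)^f$ induces an $f$-equivariant fibration $\phi\colon X\dashrightarrow B$ (with $\dim B\geq 1$) along which $f$ descends to $\id_B$; applying the Dinh--Nguyen product formula for dynamical degrees (as recalled in \cite{Dan20}) with $d_i(f|_B)=1$ yields, for the generic fiber $F$, $d_1(f)=\max(1,d_1(f|_F))=d_1(f|_F)$ and $d_2(f)\geq d_1(f|_F)=d_1(f)$, contradicting $d_1(f)>d_2(f)$. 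Hence $\kk(X)^f=\kk$.

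Next, I apply the generalized Perron--Frobenius theorem to $f^*$ on $\N^1(X)_\R$ to obtain a nonzero nef class $D$ with $f^*D\equiv d_1(f)D$. Since the spectral radius of $f^*$ on $\N^i(X)_\R$ equals $d_i(f)$ and $d_1(f)^i>d_i(f)$ for every $i\geq 2$ (because $d_1(f)\geq 1$ and $d_1(f)>d_i(f)$), the identity $f^*(D^i)\equiv d_1(f)^iD^i$ forces $D^i\equiv 0$ in $\N^i(X)_\R$ for all $i\geq 2$; so $D$ has numerical dimension $1$. For any $f^s$-invariant irreducible proper $V\subsetneq X$, Proposition~\ref{p:dyn_sub_var} gives $d_i(f^s|_V)\leq d_i(f)^s$, so $V$ is non-SDD iff $d_1(f^s|_V)=d_1(f)^s$; if moreover $D|_V\not\equiv 0$ numerically on $V$, the restricted eigen-equation $(f^s|_V)^*(D|_V)\equiv d_1(f)^s(D|_V)$ and nefness of $D|_V$ force $d_1(f^s|_V)\geq d_1(f)^s$, hence equality.

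I then take $Z$ to be the union of all $f^{-s}$-totally-invariant non-SDD irreducible proper subvarieties of $X$ (made $f^{-1}$-invariant by passing to a suitable iterate of $f$). For an $f$-periodic $V\not\subseteq Z$, after replacing $f$ by an iterate so that $f(V)=V$: if $V$ is not totally $f^{-1}$-invariant, then $f^{-s}(V)\supsetneq V$ for every $s\geq 1$, and the component-counting argument used for curves in Proposition~\ref{prodim23ssd}(1) generalizes to give $\deg(f^s|_V)<\deg(f^s)=d_d(f)^s\leq d_1(f)^s$; this immediately proves SDD when $\dim V=1$. For $\dim V\geq 2$, I apply the present proposition recursively to $(V,f|_V)$, whose cohomological hyperbolicity hypothesis transfers in the non-SDD case since then $d_1(f|_V)=d_1(f)>d_i(f)\geq d_i(f|_V)$ for $i\geq 2$ by Proposition~\ref{p:dyn_sub_var}; this controls non-SDD sub-subvarieties of $V$ and feeds back into the global $Z$.

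The main obstacle is showing that the family of $f^{-s}$-totally-invariant non-SDD proper subvarieties has union contained in a proper Zariski closed subset of $X$. For hypersurfaces this should follow from a Cantat--Dinh--Sibony type finiteness theorem under $\kk(X)^f=\kk$ and the cohomological hyperbolicity $d_1(f)>\max_{i\geq 2}d_i(f)$; for higher codimension, the plan is to induct on $\dim X$, using that a totally invariant non-SDD $V$ inherits the same hypothesis for $(V,f|_V)$, so the SDD condition on $V$ (by induction) plus the Perron--Frobenius eigenclass $D|_V$ jointly constrain the maximal non-SDD subvarieties. Carrying out this recursion so as to produce a single proper Zariski closed $f^{-1}$-invariant $Z$, and reconciling it with the restrictions $D|_V$ in the various codimensions, is the technical heart of the argument.
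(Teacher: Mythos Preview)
Your proposal has a genuine gap and misses the key idea. You never complete the argument: you explicitly flag the ``main obstacle'' of showing that the non-SDD locus is contained in a proper closed $f^{-1}$-invariant set $Z$, and your sketched plan (Cantat--Dinh--Sibony finiteness plus induction on $\dim V$) is not carried out and would not be straightforward. Moreover, your implication runs the wrong way: you show that $D|_V\not\equiv 0$ forces $V$ to be non-SDD, but to control the non-SDD locus you would need the converse, and you give no reason why a non-SDD $V$ must satisfy $D|_V\not\equiv 0$ for your \emph{globally chosen} eigenclass $D$.

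The paper's proof is much shorter and shows directly that one may take $Z=\varnothing$: \emph{every} $f$-periodic proper subvariety $V$ is SDD. The trick you are missing is to push the computation into the higher-codimension numerical group $\N^{d-\ell+1}(X)$, where $\ell=\dim V$. Assuming (after iterating) $f(V)=V$ and, for contradiction, $d_1(f|_V)=d_1(f)$, apply Perron--Frobenius not on $\N^1(X)$ but on the $(f|_V)^*$-invariant subspace $\N^1(X)|_V\subseteq\N^1(V)$ (which contains an ample class) to get $D\in\N^1(X)$ with $D|_V$ nef, nonzero, and $(f|_V)^*(D|_V)\equiv d_1(f)\,D|_V$; equivalently $f^*D\equiv d_1(f)D+F$ with $F\cdot V\equiv 0$. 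From $f_*V=\deg(f|_V)\,V$ and $f_*f^*=\deg(f)\,\id$ one gets $f^*V\equiv bV$ with $b=\deg(f)/\deg(f|_V)\geq 1$. Then
\[
f^*(D\cdot V)=f^*D\cdot f^*V\equiv b\,d_1(f)\,(D\cdot V)\quad\text{in }\N^{d-\ell+1}(X),
\]
with $D\cdot V\neq 0$. Since $b\,d_1(f)\geq d_1(f)>d_{d-\ell+1}(f)$, this contradicts the spectral radius of $f^*$ on $\N^{d-\ell+1}(X)$. Your detour through $\kk(X)^f=\kk$, the numerical dimension of $D$, component counting, and induction is unnecessary.
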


\begin{proof}
	Let $V$ be any irreducible $f$-periodic subvariety of period $m\geq 1$.
	We only need to show that $d_1(f^m|_V)<d_1(f^m)$.
	We may assume that $\dim V\geq 1$.
	After replacing $f$ by $f^m$, we may assume that $m=1$.

	Set $\ell\coloneqq\dim V\in\{1,\dots,d-1\}$.
	Denote by $q\colon\N^1(X)\to\N^1(V)$ the restriction homomorphism.
	Let $H$ be an ample class in $\N^1(X)$.
	Note that $q(\N^1(X))$ is an $(f|_V)^*$-invariant subspace of $\N^1(V)$.

	Suppose the contrary that $d_1(f|_V)=d_1(f)$.
	Then
	\[
		\lim_{n\to \infty}\Vert(f^n|_V)^*(q(H))\Vert^{1/n}=d_1(f|_V)=d_1(f)
	\]
	where $\Vert\cdot\Vert$ is any norm on $q(\N^1(X))$.
	Since the cone $\Nef(X)|_V$ contains an ample divisor on $V$,
	there is some $D\in\N^1(X)$ such that $D|_V\in\Nef(V)\setminus\{0\}$ and $(f|_V)^*(D|_V)\equiv d_1(f) D|_V$,
	noting that $d_1(f|_V)=d_1(f)$ by the extra assumption.
	In other words, we have $0\neq V\cdot D\in\N^{d-\ell+1}(X)$ (the real vector space of codimension-$(d-\ell+1)$ cycle classes modulo numerical equivalence), and $f^*D\equiv d_1(f) D+F$ where $F\cdot V\equiv 0$.
	Since $V$ is $f$-invariant, $f_*V=\deg(f|_V)V$.
	Since $f_*f^*= \deg(f)\id$ on $\N^{d-\ell}(X)$
	we get $f^*V\equiv bV$ where $b = \deg(f)/\deg(f|_V) \ge 1$.
	Then we have
	\[
		f^*(D\cdot V)=f^*D\cdot f^*V\equiv bd_1(f)(D\cdot V).
	\]
	Since $bd_1(f)\geq d_1(f)>d_{d-\ell+1}(f)$, we get a contradiction.
\end{proof}

Now we are ready for:

\begin{proof}[Proof of \cref{Thm_big_d1}]
	It follows from \cref{thmssdzdo} and \cref{prodim23ssd,prodimhigdim}.
\end{proof}

\begin{remark}
	The case (1) of \cref{Thm_big_d1} is already proved in \cite{JXZ20}.
	However, the method there is different.
\end{remark}

The following example shows that the SDD condition does not hold in general.

\begin{example}
	Let $X=\PP^2$ and $f\colon X\to X$ the endomorphism $(x,y)\to (x^2,y^2)$, in affine coordinates.
	Then we have $d_1(f)=2$.
	For two coprime positive integers $a,b$, the curve $C_{a,b}=\{x^ay^b=1\}$ is $f$-invariant and $d_1(f|_{C_{a,b}})=2$.
	This implies that SDD condition does not hold for the pair $(X,f)$.
\end{example}

\end{document}